\newtheorem{theorem}{Theorem}[section]
\newtheorem{lemma}[theorem]{Lemma}
\newtheorem{corollary}[theorem]{Corollary}
\newtheorem{proposition}[theorem]{Proposition}
\theoremstyle{definition}
\newtheorem{definition}[theorem]{Definition}
\theoremstyle{remark}
\newtheorem{remark}[theorem]{Remark}
\def\Ric{\text{Ric}}
\def\e{\epsilon}
\def\R{\Bbb R}
\def\id{\operatorname{id}}
\def\Ric{\operatorname{Ric}}
\def\tr{\operatorname{tr}}
\newcommand{\pd}[2]{\frac{\partial #1}{\partial #2}}
\newcommand{\pdt}{\pd{}{t}}
\numberwithin{equation}{section}
\begin{document}

\title[Gradient estimates and an entropy formula]{Local gradient estimates of $p$-harmonic functions, $1/H$-flow, and an entropy formula}



\author{Brett Kotschwar}
\address{Department of Mathematics, MIT, Cambridge, MA 02139}

\email{kotschwar@math.mit.edu}

\author{Lei Ni}
\address{Department of Mathematics, University of California at San Diego, La Jolla, CA 92093}

\email{lni@math.ucsd.edu}

\thanks{The first author was supported in part by NSF grant DMS-034540. The second author was
supported in part by NSF grant DMS-0504792  and an Alfred P. Sloan
Fellowship, USA}



\date{July 2007}

\keywords{}
\begin{abstract}
  In the first part of this paper, we prove local 
interior and boundary gradient estimates for $p$-harmonic
functions on general Riemannian manifolds.  With these estimates, following
the strategy in recent work of  R. Moser,
we prove an existence theorem for weak solutions to the level set formulation of 
the $1/H$ (inverse mean curvature)
flow for hypersurfaces in ambient manifolds satisfying a sharp volume growth assumption.
In the second part of this paper, we consider two parabolic analogues of the $p$-Laplace equation
and prove sharp Li-Yau type gradient estimates for positive solutions to these equations on
manifolds of nonnegative Ricci curvature.  For one of these equations, we also prove an 
entropy monotonicity formula generalizing an earlier such formula of the 
second author for the linear heat equation. As an application of this formula, we show 
that a complete Riemannian
manifold with non-negative Ricci curvature and sharp $L^p$-logarithmic Sobolev inequality
must be isometric to Euclidean space.
\end{abstract}
\maketitle

\section{Introduction}

Recently, in \cite{Moser-j}, an interesting connection between the
$p$-harmonic functions and the $1/H$ hypersurface flow (also called
the inverse mean curvature flow) was established.  Let $v$ be a positive
$p$-harmonic function, i.e., a function satisfying
\begin{equation}\label{eq-v}
\operatorname{div} \left(|\nabla v|^{p-2}\nabla v\right) =0,
\end{equation}
and let $u\doteqdot-(p-1)\log v$. It is easy to see that $u$ satisfies
\begin{equation}\label{eq-u}
\operatorname{div}\left(|\nabla u|^{p-2}\nabla u\right)=|\nabla
u|^p.
\end{equation}
The objective in \cite{Moser-j} is to obtain a weak solution to the
$1/H$ flow in the level-set formulation of \cite{IH} 
on a co-compact subdomain $\Omega$ of $\R^n$. Where it is sufficiently regular,
such a solution $u$
will satisfy
\begin{equation}\label{1/h}
\operatorname{div}\left(\frac{\nabla u}{|\nabla u|}\right)=|\nabla u|,
\end{equation}
that is, \eqref{eq-u} with $p=1$. Moser's strategy in \cite{Moser-j} is to obtain
weak solutions $u$ to \eqref{1/h} as limits of solutions $u^{(p)}$ to \eqref{eq-u}
as $p\searrow 1$. To obtain these solutions $u^{(p)}$,  he first uses explicit barriers
to solve \eqref{eq-v} with appropriate Dirichlet boundary conditions and applies the correspondence above.  The key ingredient of the convergence, and to the success of this strategy, is to obtain a gradient estimate on the solutions
$u = u^{(p)}$.  The estimate is aided by the following observation. If
$$
f\doteqdot |\nabla u|^2\doteqdot (p-1)^2\frac{|\nabla v|^2}{v^2},
$$
then, expressed in terms of $f$, (\ref{eq-u}) has the
equivalent form
\begin{equation}\label{eq-u2}
(\frac{p}{2}-1)f^{p/2-2}\langle \nabla f, \nabla u\rangle
+f^{p/2-1}\Delta u =f^{p/2}.
\end{equation}
Using this equation, the gradient estimate in
\cite{Moser-j} is proven from out of a boundary estimate, which in turn is derived
from a $C^0$ estimate by way of explicit barriers, certain integral estimates on
$|\nabla v|$, and a Harnack inequality for $p$-harmonic functions on Euclidean spaces.
The construction of the barriers in particular rely on the underlying manifold being Euclidean.
The regularization procedure and also the reduction of the convergence argument
to a certain gradient estimate, uniform in $p$ as $p\to 1$,  have some precedence, for example,
in \cite{EI}, in other contexts.

In the first part of this paper, we
derive interior and boundary gradient estimates on a general
Riemannian manifold $(M, g)$ via the gradient estimate technique of
\cite{Ch-Yau, LY} and use it to establish an existence result for
the $1/H$-flow on a class of complete Riemannian  manifolds. A new feature of our derivation
of the local estimate is a nonlinear Bochner type formula relating
the nonlinear operator with its linearization.

We first start with an interior/local  estimate for positive
$p$-harmonic functions, which may be of independent  interest.

\begin{theorem}\label{main1}
Assume that $v$ is a positive $p$-harmonic function on the ball
$B(x_0, R)$, and that on
 the ball $B(x_0, R)$ the sectional curvature of $(M, g)$, $K_M \ge
-K^2$. Then for any $\epsilon>0$,
\begin{equation}\label{local-gre}
\sup_{B(x_0, \frac{R}{2})}|\nabla u|^2 \le
\frac{5(n-1)}{R^2(1-\epsilon)}\left(c_{p, n}+\frac{(n-1)b_{p,
n}^2}{8\epsilon}\right)+C(n, K, p, R, \epsilon)
\end{equation}
where \begin{eqnarray*} b_{p,
n}&=&\left(\frac{2(p-1)}{n-1}-2\right),\\
 c_{p,
n}&=&\left(\frac{2(p/2-1)^2}{n-1}+\frac{(p-2)p}{2}\right)_{-}+\left((\frac{p}{2}+1)\max
\{p-1, 1\}\right),
\\
C(n, K, p, R,
\epsilon)&=&\frac{(n-1)^2}{1-\epsilon}K^2+\frac{10(n+p-2)(n-1)}{1-\epsilon}\frac{1+KR}{R^2}
\\
&\quad &+\frac{5\max \{p-1, 1\}(n-1)}{(1-\epsilon)R^2}.
\end{eqnarray*}
\end{theorem}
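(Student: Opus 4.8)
The plan is to run the Cheng--Yau/Li--Yau gradient estimate technique on the function $u = -(p-1)\log v$, or rather on $f = |\nabla u|^2$, using the nonlinear Bochner formula alluded to in the introduction. The first step is to derive that Bochner-type identity: starting from the equation \eqref{eq-u2} satisfied by $f$ and $u$, I would differentiate and commute derivatives (using the standard Bochner formula $\frac12\Delta|\nabla u|^2 = |\H u|^2 + \langle\nabla\Delta u,\nabla u\rangle + \Ric(\nabla u,\nabla u)$) to obtain an equation of the form $\mathcal{L}f \ge (\text{good positive terms in } f, |\H u|^2) - (\text{curvature}) - (\text{cross terms})$, where $\mathcal{L}$ is the linearization of the $p$-Laplacian at $u$, a uniformly elliptic operator with a drift term built from $\nabla u$ and $\nabla f$. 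The point of passing to $\mathcal{L}$ is that the nonlinear operator becomes tractable; the coefficients $b_{p,n}$ and $c_{p,n}$ in the statement are exactly the constants that emerge when one bounds $|\H u|^2$ from below by $\frac{(\Delta u)^2}{n}$ — but here one must be careful because \eqref{eq-u2} expresses $\Delta u$ in terms of $f^{1/2}$ and $\langle\nabla f,\nabla u\rangle$, so the trace term feeds back into $f$ with precisely these coefficients. Using $K_M \ge -K^2$ controls the Ricci term by $-(n-1)K^2|\nabla u|^2$.

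Next I would introduce the standard Li--Yau cutoff: let $\phi$ be a smooth function of $r(x) = d(x_0,x)$, supported in $B(x_0,R)$, equal to $1$ on $B(x_0,R/2)$, with $|\nabla\phi|^2/\phi$ and $|\Delta\phi|$ controlled by $1/R^2$ and $(1+KR)/R^2$ (the latter via the Laplacian comparison theorem, which needs $K_M \ge -K^2$). One then considers $G = \phi f$ and applies the maximum principle at an interior point $x_1$ where $G$ attains its maximum (if $x_1$ is in the cut locus one uses Calabi's trick). At $x_1$ one has $\nabla G = 0$, hence $\nabla f = -f\nabla\phi/\phi$, and $\mathcal{L}G \le 0$. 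Substituting the Bochner inequality and the cutoff bounds, and expressing $\langle\nabla f,\nabla u\rangle$ at $x_1$ via the critical-point relation, yields a quadratic inequality in $G(x_1) = (\phi f)(x_1)$ of the form $A\,G^2 \le B\,G + C$, where $A$ comes from the leading $f^{p/2}$-type term (after dividing through by the appropriate power of $f$), $B$ collects the $b_{p,n}$ and cutoff contributions, and $C$ the curvature and $\Delta\phi$ contributions. Solving this quadratic — keeping track of the factors of $5$, $8\epsilon$, and $(1-\epsilon)$ that arise from Cauchy--Schwarz with a free parameter $\epsilon$ used to absorb cross terms — gives the stated bound on $G(x_1)$, and since $G \ge f$ on $B(x_0,R/2)$ this is the desired estimate.

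The main obstacle, and the genuinely new point, is the first step: because the $p$-Laplacian is degenerate elliptic where $\nabla u$ vanishes, one cannot simply run Bochner on the original equation — the linearization $\mathcal{L}$ has coefficients involving $|\nabla u|^{p-2}$, and the Hessian term $|\H u|^2$ interacts with the extra second-order term $\langle\nabla^2 u\,\nabla u,\nabla u\rangle$ coming from $\Delta u = \operatorname{div}(\nabla u)$ versus the $p$-divergence. Getting the algebra of this nonlinear Bochner formula right — in particular identifying which combinations of the bad terms can be absorbed into $|\H u|^2 \ge (\Delta u)^2/n$ and which must be thrown into the $\epsilon$-cross-term, and tracking the dependence on $\max\{p-1,1\}$ that distinguishes the cases $p \le 2$ and $p \ge 2$ (reflected in the $(\cdot)_-$ and $\max\{p-1,1\}$ pieces of $c_{p,n}$) — is where the real work lies. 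One should also address regularity: $p$-harmonic functions are only $C^{1,\alpha}$ in general, so strictly the computation is carried out on the open set $\{\nabla v \neq 0\}$, or after a standard regularization of the equation, with the estimate passing to the limit since it is independent of the regularization parameter.
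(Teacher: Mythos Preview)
Your strategy is essentially that of the paper, but two points need sharpening; as written they would not produce the stated constants and would obscure why a sectional (rather than Ricci) curvature hypothesis is required.

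First, the crude bound $|\H u|^2 \ge (\Delta u)^2/n$ is too weak. The paper works in an orthonormal frame with $e_1 = \nabla u/|\nabla u|$ and uses
\[
\sum_{i,j} u_{ij}^2 \;\ge\; u_{11}^2 + 2\sum_{j\ge 2} u_{1j}^2 + \frac{1}{n-1}\Bigl(\sum_{j\ge 2} u_{jj}\Bigr)^2.
\]
Since $2u_{1k}\sqrt{f} = f_k$, the first two pieces are expressible in terms of $|\nabla f|^2/f$, while (\ref{eq-u2}) gives $\sum_{j\ge 2} u_{jj} = f - (\tfrac{p}{2}-1)\,f_1 u_1/f - u_{11}$. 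The leading term is therefore $\frac{1}{n-1}f^2$, not $\frac{1}{n}f^2$; this is precisely what puts the factors of $n-1$ into $b_{p,n}$, $c_{p,n}$, and --- after $R\to\infty$, $\epsilon\to 0$ --- the sharp global bound $(n-1)^2K^2$. With your version you would only get $n^2K^2$.

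Second, the cutoff step requires the \emph{Hessian} comparison theorem, not the Laplacian comparison. The linearization $\mathcal{L}$ acts on the cutoff through $\operatorname{div}(A\nabla\eta)$, and since $A^{ij} = g^{ij} + (p-2)u^iu^j/f$ has an anisotropic piece, bounding $A^{ij}\eta_{ij}$ from below forces control of the full Hessian of $r$, hence a lower bound on the sectional curvature. (Laplacian comparison needs only a Ricci lower bound, so it cannot be what is invoked here.) This is the sole place the hypothesis $K_M\ge -K^2$ enters, and the paper remarks on this explicitly.
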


Note that $b_{p, n}$, $c_{p, n}$ and $C(n, K, p, R, \epsilon)$ all
stay finite as $p\to 1$. Hence Theorem \ref{main1} effectively gives
an estimate for the gradient of the solution to $1/H$-flow. Also, if
$v$ is defined globally, by taking $R\to \infty$, and $\epsilon \to
0$, Theorem \ref{main1}  implies that for  any positive $p$-harmonic
function $v$, $u=-(p-1)\log v$ satisfies
\begin{equation}\label{global-gre}
|\nabla u|^2 \le (n-1)^2K^2.
\end{equation}
The constant $(n-1)^2$ is sharp in light of the results of
\cite{Yau1, LW} for the case $p=2$. From the proof it is evident that when $p=2$, one
can relax the assumption $K_M \ge -K^2$ by $\Ric\ge -(n-1)K^2$. A
direct consequence of this is that if $(M, g)$ is a complete
manifold with nonnegative sectional curvature, then any positive
$p$-harmonic function must be a constant. In fact, using the gradient
technique of this paper, one can prove that any positive $p$-harmonic function
must be a constant if $M$ has sectional curvature bounded from below
and its Ricci curvature is nonnegative. However, this result can be
obtained by only assuming that the manifold has the so-called volume
doubling property and satisfies a Poincar\'e type inequality (see,
for example, \cite{Ho} as well as Section 4 for more details). Hence
it holds in particular on any Riemannian manifold with nonnegative
Ricci curvature. On the other hand, it is not clear whether one can
obtain an estimate on $|\nabla u|^2$ such as (\ref{global-gre}) under the
weaker assumption of a Ricci curvature lower bound.

 With the help of the interior gradient estimate, by constructing suitable
  (local) barrier functions
 we can establish the following boundary estimate:

\medskip
 {\it For every $\epsilon > 0$, there exists $p_0 =
p(\epsilon)
> 1$ such that if $u$ satisfies equation \eqref{eq-u} on $\Omega$
for some $1 < p \leq p_0$, then we have the estimate
\begin{equation}
\label{boundarymc}
    \left|\nabla u\right| \leq H_{+} + \epsilon
\end{equation}
where $H$ denotes the mean curvature of $\partial\Omega$ and
$H_{+}(x) = \max\left\{ H(x), 0\right\}$. }

\medskip

A similar boundary estimate was first proved in \cite{IH} for
solutions to a different equation approximating \eqref{1/h} (called elliptic
regularization). Our method is a modification of theirs.

With the help of the interior and boundary estimates above,
and following the general scheme of \cite{Moser-j}, one can prove the existence of a {\it proper solution} (please see
Section 4 for the definition) for a class of Riemannian
manifolds which include the asymptotically Euclidean manifolds considered in
\cite{IH}. The following is a special case implied by our general
existence  theorem.

\begin{theorem} Let $M$ be a complete Riemannian manifold such that
its sectional curvature $K_M(x)\ge -k(r(x))$ for some nonincreasing
function $k(t)$ with $\int_0^\infty t k(t)\, dt < \infty$. Let
$\Omega$ be an end of $M$. Assume that for some $p_0>1$,
$$\int_1^\infty \left(\frac{t}{V(\Omega\cap B(o,
t))}\right)^{1/(p_0-1)}\, dt < \infty$$ and
$$
\lim_{r\to \infty}\sup_{2r\le t< \infty}\frac{t}{V(\Omega \cap B(o,
t ))}=0.
$$
Then (\ref{1/h}) has a proper weak solution $u$ with $\lim_{x\to
\infty}|\nabla u|(x)=0$.
\end{theorem}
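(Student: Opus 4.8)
The plan is to follow Moser's scheme, now powered by the interior and boundary gradient estimates established above. First I would fix $p$ with $1 < p \le p_0$ and, for each large $R$, solve the Dirichlet problem for the $p$-harmonic equation \eqref{eq-v} on the annular region $\Omega \cap B(o, R)$ with boundary data chosen so that the corresponding $u^{(p)}_R = -(p-1)\log v^{(p)}_R$ equals $0$ on $\partial \Omega$ and a suitable large constant on $\partial B(o,R)$; existence of such $v^{(p)}_R$ follows from standard variational theory for the $p$-Laplacian together with explicit barriers (the curvature assumption $K_M \ge -k(r)$ with $\int_0^\infty t\,k(t)\,dt < \infty$ guarantees the manifold is, roughly, of controlled geometry so that radial barrier functions can be built). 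The volume growth hypotheses enter precisely here: the integral $\int_1^\infty (t/V(\Omega\cap B(o,t)))^{1/(p_0-1)}\,dt < \infty$ is what forces the $C^0$ bounds on $u^{(p)}_R$ to be uniform in $R$, so that one can pass to the limit $R \to \infty$ and obtain an entire solution $u^{(p)}$ of \eqref{eq-u} on $\Omega$.

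Next I would collect the uniform estimates. Theorem \ref{main1}, applied on balls $B(x, \rho)$ with $\rho$ comparable to the distance to $\partial\Omega$ and using that all the constants $b_{p,n}, c_{p,n}, C(n,K,p,R,\epsilon)$ stay finite as $p \to 1$, gives an interior gradient bound on $|\nabla u^{(p)}|$ that is uniform in $p$ near $1$; combined with the boundary estimate \eqref{boundarymc}, valid once $p \le p_0(\epsilon)$, one gets a global bound $\sup_\Omega |\nabla u^{(p)}| \le C$ uniform in $p$. Moreover the second volume-growth hypothesis $\lim_{r\to\infty}\sup_{t\ge 2r} t/V(\Omega\cap B(o,t)) = 0$ should, via the same barrier comparison used for the $C^0$ estimate, give decay $|\nabla u^{(p)}|(x) \to 0$ as $x \to \infty$, uniformly in $p$. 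With uniform $C^0$ and $C^1$ bounds, elliptic regularity for the (degenerate, but in regions where $\nabla u \ne 0$ uniformly elliptic) $p$-Laplace equation yields uniform local $C^{1,\alpha}$ estimates, so a subsequence $u^{(p)} \to u$ as $p \searrow 1$ converges in $C^1_{loc}$, with $u$ inheriting the global gradient bound and the decay at infinity.

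Then I would verify that the limit $u$ is a \emph{proper weak solution} of \eqref{1/h} in the level-set sense of \cite{IH}: properness follows from the $C^0$ estimates (the sublevel sets $\{u \le c\}$ are compact, being squeezed between the barrier level sets), and the fact that $u$ is a weak solution — i.e. a solution in the appropriate variational/comparison sense of the $1/H$-flow — is exactly the content of Moser's convergence argument in \cite{Moser-j}, which only used the Euclidean structure through the gradient estimate; since we have supplied a substitute gradient estimate valid on $M$, that argument transfers verbatim. One checks that the limiting equation is \eqref{eq-u} with $p=1$, i.e. \eqref{1/h}, by testing against smooth compactly supported functions and using $|\nabla u^{(p)}|^{p-2}\nabla u^{(p)} \to \nabla u/|\nabla u|$ wherever $|\nabla u| \ne 0$, with the degenerate set handled as in \cite{Moser-j, IH}.

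The main obstacle I expect is twofold. First, converting the abstract volume-growth hypotheses into the concrete uniform $C^0$ (and gradient-decay) estimates on $u^{(p)}_R$: this requires constructing explicit radial super- and sub-solutions of \eqref{eq-v} on $\Omega$ whose behavior is governed by $\int (t/V(\Omega\cap B(o,t)))^{1/(p-1)}\,dt$, and carefully tracking the dependence on $p$ and $R$ so that the bounds survive both limits — the curvature condition $\int_0^\infty t k(t)\,dt < \infty$ is used here to control the Laplacian of the distance function (via comparison with the model of curvature $-k$) so these barriers actually work. Second, making the notion of \emph{proper weak solution} precise and showing the limit satisfies it with no loss on a possibly large degenerate set; this is where one must invoke the variational characterization from \cite{IH} and the lower-semicontinuity arguments of \cite{Moser-j} rather than attempt a direct PDE computation. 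Once the barrier analysis is in place, the passage to the limit and identification of the equation are, as in \cite{Moser-j}, comparatively routine.
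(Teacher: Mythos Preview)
Your overall architecture---solve \eqref{eq-u} for each $p$, get uniform gradient bounds from Theorem~\ref{main1} and the boundary estimate, then pass to the limit via Moser's compactness argument---matches the paper. But the part you flag as the ``main obstacle'' is where your plan diverges from the paper and where there is a genuine gap.

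You propose to obtain the $C^0$ control on $u^{(p)}$ by building \emph{explicit radial sub/supersolutions} of \eqref{eq-v} on $\Omega$. The paper does not do this, and on a general end with only asymptotically nonnegative sectional curvature it is not clear such barriers can be built with the correct $p$-dependence. Instead, the paper uses the curvature hypothesis to verify that $\Omega$ is a \emph{uniformly homogeneous} end (volume doubling, weak Neumann--Poincar\'e, and the volume comparison condition all hold with constants independent of $p$ as $p\to 1$), and then invokes Holopainen's estimate on the $p$-Green's function: for the $p$-harmonic function $v^{(p)}$ with $v^{(p)}=1$ on $\partial\Omega$ and $v^{(p)}\to 0$ at infinity,
\[
v^{(p)}(x)\le \frac{C_4}{(p-1)^2}\int_{2r(x)}^\infty\left(\frac{t}{V(\Omega\cap B(o,t))}\right)^{1/(p-1)}\,dt.
\]
The delicate step is then to take $u^{(p)}=-(p-1)\log v^{(p)}$ and track what survives as $p\to 1$. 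Writing $\mathcal{V}(r)=\sup_{t\ge 2r}t/V(\Omega\cap B(o,t))$ and $\mathcal{A}_{p_0}(r)=\int_{2r}^\infty(t/V)^{1/(p_0-1)}\,dt$, one interpolates to get
\[
u^{(p)}(x)\ge 2(p-1)\log(p-1)-(p-1)\log C_4-(p-1)\log\mathcal{A}_{p_0}(r(x))-\tfrac{p_0-p}{p_0-1}\log\mathcal{V}(r(x)),
\]
and in the limit $p\to 1$ only the last term survives, giving $u(x)\ge -\log\mathcal{V}(r(x))$. This is precisely where \emph{both} volume hypotheses are used, and not in the way you describe: the $p_0$-nonparabolicity condition ensures the $p$-harmonic $v^{(p)}$ exists and is nontrivial, while the condition $\mathcal{V}(r)\to 0$ is what makes the limit $u$ \emph{proper}. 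You have the second hypothesis producing gradient decay, but in fact $|\nabla u|(x)\to 0$ comes directly from Theorem~\ref{main1}: the local curvature bound $K^2$ there can be taken to be $k(r(x))$, which tends to zero under the asymptotically-nonnegative-curvature assumption.
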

The volume growth conditions in the theorem are optimal for the
existence of {\it proper} solutions. This is  shown  in
Section 4.

After the uniform gradient estimate, the key of the proof of the
existence theorem is to construct certain bounded $p$-harmonic
functions and obtain effective $C^0$-estimates of such $p$-harmonic functions at infinity
which hold up as $p\to 1$. Since we need
to ensure that the limit as $p\to 1$ is a nonconstant function which
goes to $+\infty$ at the infinity of manifold, the estimate is
somewhat delicate. Here we rely crucially on the earlier work of
Holopainen \cite{Ho}. One may expect more general existence results,
for example, for manifolds with a Laplacian whose
$L^2$-spectrum has a positive lower bound \cite{LW}. However, it
seems that further work, with perhaps more delicate estimates, may be needed.

In the second part of the paper, which starts from Section 5, we consider some nonlinear parabolic
equations motivated by Theorem \ref{main1}. First, we prove sharp
gradient estimates of Li-Yau type for two nonlinear parabolic
equations associated with (\ref{eq-v}). Both estimates proved are
sharp in the case that $M$ has nonnegative Ricci curvature and
provide nonlinear generalizations of Li-Yau's estimate for the heat
equation. One of these estimates was obtained earlier in \cite{EV1,
EV2} (see also \cite{V1}) for the special case $M=\R^n$, in the
study of the regularity of nonnegative weak solutions.  We refer the
readers to Section 5 and 6 for the more detailed discussions on
these results.

A little surprisingly, we  also obtain an  entropy formula for a
class of nonlinear parabolic equations,  generalizing the earlier formula
for the linear heat equation in \cite{N1}. More precisely, we show the result below.

\begin{theorem} Let $(M, g)$ be a complete Riemannian manifold.  For any $p>1$,
let $v$ be a positive solution to the equation
\begin{equation}\label{ddnonl}
\frac{\partial v^{p-1}}{\partial t}=(p-1)^{p-1}
\operatorname{div}(|\nabla v|^{p-2}\nabla v)
\end{equation}
satisfying $\int v^{p-1}d\mu =1$. Then $$\frac{d}{d t}\mathcal{W}_p(v,
t)=-tp\int_M \left(\left|f^{p/2-1}\nabla_i \nabla_j u
-\frac{1}{tp}a_{ij}\right|^2_{A}+f^{p-2}R_{ij} u_i u_j\right)
v^{p-1}\, d\mu
$$
with $u=-(p-1)\log v$, $f=|\nabla u|^2$,
 $a_{ij}=g_{ij}-\frac{p-2}{p-1}\frac{v_i v_j}{|\nabla v|^2}$, $|T|_A^2=A^{ik}A^{jl}T_{ij}T_{kl}$
  for any $2$-tensor $T$ where $(A^{ij})$ is the inverse of $(a_{ij})$. The entropy
  $$\mathcal{W}_p(v, t)=\int_M
(t|\nabla \varphi|^p+\varphi -n)v^{p-1}\, d\mu
$$
is defined  with
$v^{p-1}=\frac{1}{\pi^{n/2}({p^*}^{p-1}p)^{\frac{n}{p}}}\frac{\Gamma(n/2+1)}
{\Gamma(n/p^*+1)}\frac{e^{-\varphi}}{t^{\frac{n}{p}}}$,
 where $p^*=\frac{p}{p-1}$, and is assumed to be finite.
\end{theorem}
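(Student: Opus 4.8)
The plan is to imitate Perelman's derivation of the monotonicity of $\mathcal{W}$ for Ricci flow and the second author's derivation for the linear heat equation, carried out now for the nonlinear diffusion \eqref{ddnonl}. Set $w=v^{p-1}$, so that $w$ solves $\partial_t w = (p-1)^{p-1}\operatorname{div}(|\nabla v|^{p-2}\nabla v)$, and note that $w$ is a probability density with respect to $d\mu$. Introduce $\varphi$ by $w=\frac{1}{\pi^{n/2}({p^*}^{p-1}p)^{n/p}}\frac{\Gamma(n/2+1)}{\Gamma(n/p^*+1)}t^{-n/p}e^{-\varphi}$; the normalizing constant is chosen precisely so that on $\R^n$ the function $w=H_p$, the fundamental solution (Barenblatt profile) of \eqref{ddnonl}, corresponds to $\varphi=|x|^{p^*}/(p^* t^{1/(p-1)}\cdot\text{const})$-type expression, making $\mathcal{W}_p$ vanish identically along the model solution. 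The first step is to translate the evolution equation for $w$ into an evolution equation for $\varphi$; since $|\nabla v|^{p-2}\nabla v = \left(\tfrac{1}{p-1}\right)^{p-1}|\nabla u|^{p-2}\nabla u \cdot(\text{powers of }v)$ with $u=-(p-1)\log v$, one gets a Hamilton--Jacobi--type equation $\partial_t\varphi = -|\nabla\varphi|^p\cdot(\text{something}) + \Delta_{p}\varphi\text{-terms} - \tfrac{n}{pt}$, which I would record cleanly in terms of $f=|\nabla u|^2$ and the operator $L(\cdot)=f^{p/2-1}\Delta(\cdot) + (\tfrac p2-1)f^{p/2-2}\langle\nabla f,\nabla(\cdot)\rangle$ — this is exactly the linearization appearing in \eqref{eq-u2}. (In fact $u$ itself solves the parabolic version of \eqref{eq-u}, so it is more convenient to differentiate $\mathcal{W}_p$ while keeping $u$ and $f$ as the primary variables.)

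Next I would differentiate $\mathcal{W}_p(v,t)=\int_M(t|\nabla\varphi|^p+\varphi-n)w\,d\mu$ under the integral sign. Two sources contribute: the explicit $t$-derivative of the integrand and the $t$-derivative of $w$, the latter handled by the PDE and integration by parts (assuming enough decay at infinity to discard boundary terms — on a general complete manifold this requires the finiteness and integrability hypotheses built into the statement). The terms involving $\partial_t\varphi$ and $\partial_t|\nabla\varphi|^p$ must be combined with the divergence terms coming from $\int(\partial_t w)(\cdots)$. The goal is to reorganize everything into a perfect-square plus curvature form. The key algebraic identity is a Bochner formula for the nonlinear operator: one computes $L(f) = L(|\nabla u|^2)$ and expresses it via $2f^{p/2-1}|\nabla_i\nabla_j u|^2$-type terms weighted by $a_{ij}=g_{ij}-\tfrac{p-2}{p-1}\tfrac{v_iv_j}{|\nabla v|^2}$, the Ricci term $f^{p-2}R_{ij}u_iu_j$, and cross terms $\langle\nabla f,\nabla\Delta u\rangle$-style expressions. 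This nonlinear Bochner identity — the "new feature" the introduction advertises — is what produces the anisotropic norm $|\cdot|_A$ and the $R_{ij}u_iu_j$ term in the final formula. After substituting it and completing the square against the $\tfrac{1}{tp}a_{ij}$ shift (which is forced by matching the $-\tfrac{n}{pt}$ term and the $t$-differentiation of the $\varphi-n$ factor), all remaining non-sign-definite terms should cancel, leaving exactly $-tp\int_M(|f^{p/2-1}\nabla_i\nabla_j u - \tfrac{1}{tp}a_{ij}|_A^2 + f^{p-2}R_{ij}u_iu_j)w\,d\mu$.

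The main obstacle I anticipate is the bookkeeping in the nonlinear Bochner computation: because the operator $L$ is not self-adjoint with respect to $d\mu$ (its natural weight involves $f^{p/2-1}$ and the metric $a_{ij}$), integration by parts generates many terms, and one must track which combination assembles into $A^{ik}A^{jl}$-contracted Hessians versus what cancels. A subsidiary difficulty is choosing the correct "shift" inside the square; the coefficient $\tfrac{1}{tp}a_{ij}$ is dictated by the requirement that the trace term $A^{ij}\cdot\tfrac{1}{tp}a_{ij}=\tfrac{n}{tp}$ reproduces the linear $\partial_t(\varphi-n)$ contribution, so I would pin this down first by a scaling/homogeneity check against the $\R^n$ Barenblatt solution (along which $\mathcal{W}_p\equiv 0$ and the right-hand side vanishes, giving a strong consistency test). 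Finally, justifying the integration by parts on a noncompact $M$ — discarding boundary terms at infinity — requires the finiteness of $\mathcal{W}_p$ together with parabolic decay estimates for $v$; I would either impose a convenient a priori decay assumption or localize with a cutoff and pass to the limit, as is standard for entropy formulae of this type.
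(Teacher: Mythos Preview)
Your strategy --- differentiate $\mathcal{W}_p$, invoke a nonlinear Bochner identity, and complete the square --- is the paper's strategy, but the paper packages the computation more cleanly and sidesteps the bookkeeping you flag as the main obstacle. Two organizational devices make the difference. First, rather than differentiate $\mathcal{W}_p$ directly, the paper works through the hierarchy $\mathcal{N}_p(v,t) = N(v,t) - \tfrac{n}{p}\log t$ with $N(v,t)=\int u\,v^{p-1}$, then $\mathcal{F}_p = d\mathcal{N}_p/dt$ and $\mathcal{W}_p = d(t\mathcal{N}_p)/dt$, so that $d\mathcal{W}_p/dt = t\,d\mathcal{F}_p/dt + 2\mathcal{F}_p$; completing the square against $\tfrac{1}{tp}a_{ij}$ is then a one-line identity once $d\mathcal{F}_p/dt$ is known. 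Second, the paper isolates a conservation law: for \emph{any} $\psi$,
\[
\frac{d}{dt}\int_M\psi\,v^{p-1}\,d\mu \;=\; \int_M\bigl((\partial_t - \mathcal{L})\psi\bigr)v^{p-1}\,d\mu,
\]
where $\mathcal{L}(\psi) = \operatorname{div}(f^{p/2-1}A\nabla\psi) - pf^{p/2-1}\langle\nabla u,\nabla\psi\rangle$ is the full linearization of $\Lambda_p$ (not the truncated $L$ you wrote --- yours is missing both the $A$-tensor inside the divergence and the drift term, and with that operator the Bochner computation will not close up). This conservation law absorbs all the integration by parts at once: since one already has $(\partial_t-\mathcal{L})F_1 = -pf^{p-2}\bigl(|\nabla\nabla u|_A^2 + R_{ij}u_iu_j\bigr)$ for $F_1 = |\nabla u|^p + u_t$ (this is the Bochner formula, Corollary~\ref{cor:grad}), one immediately gets $\tfrac{d}{dt}\int F_1\,v^{p-1} = -p\int f^{p-2}\bigl(|\nabla\nabla u|_A^2 + R_{ij}u_iu_j\bigr)v^{p-1}$, and since $\int F_1\,v^{p-1} = \int|\nabla u|^p v^{p-1} = dN/dt$, the entropy formula drops out. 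Your direct route would reproduce this, but you would effectively be rediscovering the conservation law in the middle of a longer calculation; it is worth isolating it first.
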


The nonlinear heat equation (\ref{ddnonl}) has been the object of some previous
study. See, for example, \cite{V1} for a nice survey on the subject.
When $p=1$, the above theorem limits to the following form.

\begin{theorem} \label{efp=1} Let $(M, g)$ be a complete Riemannian manifold.
Let $u$ be a solution to
\begin{equation}\label{1/hpar}
\frac{\partial u}{\partial t} -\operatorname{div}(|\nabla
u|^{-1}\nabla u)+|\nabla u|=0
\end{equation}
satisfying that $\int_M e^{-u}\, d\mu =1$.  Then
$$\frac{d}{d
t}\mathcal{W}_1(u, t)=-t\int_M \left(\left|f^{-1/2}\nabla_\alpha
\nabla_\beta u -\frac{1}{t}g_{\alpha\beta}\right|^2+
\frac{1}{t^2}+f^{-1}R_{ij} u_i u_j\right) e^{-u}\, d\mu
$$
with $f=|\nabla u|^2$, $2\le \alpha, \beta \le n$ are with respect
to the orthonormal frame $e_1=\frac{\nabla u}{|\nabla u|}, e_2,
\cdot\cdot\cdot, e_n$. The entropy
$$
\mathcal{W}_1(u, t)=\int_M \left( t|\nabla u|+u-n\log t -\log
\omega_n-n\right)e^{-u}\, d\mu,
$$
where $\omega_n$ is the volume of unit ball in $\R^n$, is assumed to
be finite.
\end{theorem}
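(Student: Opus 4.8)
I would prove the identity by differentiating $\mathcal{W}_1$ directly along \eqref{1/hpar}, using the fact that it is the formal limit as $p\searrow 1$ of the $\mathcal{W}_p$-formula stated above both as motivation and as a check on the algebra. Indeed, in that limit the normalizing constant $C_p$ tends to $\omega_n^{-1}$ and $v^{p-1}\,d\mu\to e^{-u}\,d\mu$; in the orthonormal frame $e_1=\nabla u/|\nabla u|,\,e_2,\dots,e_n$ the tensor $a_{ij}=g_{ij}-\tfrac{p-2}{p-1}\tfrac{v_iv_j}{|\nabla v|^2}$ is diagonal with $a_{11}=\tfrac1{p-1}$ and $a_{\alpha\beta}=\delta_{\alpha\beta}$, so $A^{11}=p-1\to 0$ while $A^{\alpha\beta}\to\delta_{\alpha\beta}$; and expanding $\big|f^{p/2-1}\nabla_i\nabla_j u-\tfrac1{tp}a_{ij}\big|_A^2$ in this frame, the $e_1e_1$-entry equals $(p-1)^2\big(f^{p/2-1}u_{11}-\tfrac1{tp(p-1)}\big)^2\to\tfrac1{t^2}$, the mixed entries $2(p-1)f^{p-2}u_{1\alpha}^2$ vanish, and the tangential entries tend to $\big|f^{-1/2}\nabla_\alpha\nabla_\beta u-\tfrac1t g_{\alpha\beta}\big|^2$. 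Combined with $f^{p-2}R_{ij}u_iu_j\to f^{-1}R_{ij}u_iu_j$ and $-tp\to -t$, this is precisely the asserted formula.

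For the direct argument I would work on $\{|\nabla u|>0\}$, where \eqref{1/hpar} is a smooth uniformly parabolic quasilinear equation, and proceed as follows. First, $e^{-u}\,d\mu$ has constant mass: integrating $\int\operatorname{div}(|\nabla u|^{-1}\nabla u)\,e^{-u}\,d\mu$ by parts yields $\int|\nabla u|\,e^{-u}\,d\mu$, which cancels the $|\nabla u|$ term in the equation, so $\int e^{-u}\,d\mu\equiv 1$. Next, setting $w=e^{-u}$ and $P=t|\nabla u|+u-n\log t-\log\omega_n-n$, so that $w_t=-u_tw$, one has $\frac{d}{dt}\mathcal{W}_1=\int(P_t-Pu_t)\,w\,d\mu$; the term $t\int\partial_t|\nabla u|\,w\,d\mu=t\int|\nabla u|^{-1}\langle\nabla u_t,\nabla u\rangle w\,d\mu$ integrates by parts, via $\operatorname{div}(|\nabla u|^{-1}w\nabla u)=wu_t$, to $-t\int u_t^2\,w\,d\mu$. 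The core step is the nonlinear Bochner identity for $f=|\nabla u|^2$ — the parabolic counterpart of the one used to prove Theorem \ref{main1} — which writes $\partial_tf$ minus the linearization of the $1/H$-operator applied to $f$ as a sum of $(f^{-1/2}\nabla_\alpha\nabla_\beta u)^2$ over $\alpha,\beta\ge 2$ only (the linearized operator is the degenerate ``level-set'' operator, which annihilates the $e_1$-direction), a Ricci term $f^{-1}R_{ij}u_iu_j$, and lower-order terms. Substituting this, integrating the remaining terms by parts against $w\,d\mu$, and completing the square, the cross-terms of $\nabla f$ with $\nabla u$ together with the $\tfrac1t$-contributions coming from $P_t$ and from $\partial_t(-n\log t)$ assemble into $\big|f^{-1/2}\nabla_\alpha\nabla_\beta u-\tfrac1t g_{\alpha\beta}\big|^2$, while the $e_1e_1$ Hessian component $f^{-1/2}u_{11}$ — not controlled by the degenerate operator, and rewritten through the equation as $f^{-1/2}\big(\Delta u-|\nabla u|(u_t+|\nabla u|)\big)$ — produces, on being combined with $\tfrac1t$, exactly the remaining $\tfrac1{t^2}$.

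Two points will need care. The integrations by parts must be valid at infinity: here I would invoke the completeness of $M$, the standing finiteness of $\mathcal{W}_1$, and the gradient and Hessian bounds furnished by Theorem \ref{main1} and its parabolic analogue to run cutoff arguments with vanishing boundary contributions. More seriously, \eqref{1/hpar} is singular on $\{\nabla u=0\}$, so the identity is first obtained on $\{|\nabla u|>0\}$ and must then be promoted to all of $M$ — either by showing $\{\nabla u=0\}$ contributes nothing to the relevant integrals, or, mirroring the elliptic theory of the first part of the paper, by approximating $u$ by the smooth solutions $v^{(p)}$ of \eqref{ddnonl} and passing to the limit in the $\mathcal{W}_p$-identity. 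Securing this last step — convergence of $u^{(p)}$ together with its first and second derivatives, strong enough to pass to the limit on both sides — is, I expect, the main obstacle.
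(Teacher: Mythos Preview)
Your approach is essentially the paper's: the paper proves the general $\mathcal{W}_p$-formula via the conservation law (Proposition 6.4), the Bochner identity (Corollary 6.2), the energy identities of Proposition 6.6, and the relation $\frac{d\mathcal{W}_p}{dt}=t\frac{d\mathcal{F}_p}{dt}+2\mathcal{F}_p$ followed by completing the square, and then simply says ``the case $p=1$ can be shown similarly.'' Your direct argument in the second paragraph is exactly this scheme specialized to $p=1$, and your formal-limit computation in the first paragraph is a correct consistency check (though not itself a proof). The paper likewise handles the degeneracy via the $\epsilon$-regularization $\Lambda_{p,\e}$ and defers the noncompact integration-by-parts justification to \cite{chowetc1}, matching your final paragraph.

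One small correction to your bookkeeping: the extra $\tfrac{1}{t^2}$ does \emph{not} come from the $e_1e_1$ Hessian component $u_{11}$. For $p=1$ the tensor $A=\id-\frac{\nabla u\otimes\nabla u}{f}$ is the orthogonal projection off $\nabla u$, so $|\nabla\nabla u|_A^2=\sum_{\alpha,\beta\ge 2}u_{\alpha\beta}^2$ and $F_1=f^{-1/2}\sum_{\alpha\ge 2}u_{\alpha\alpha}$; neither $u_{11}$ nor $u_{1\alpha}$ ever enters. Completing the square gives
\[
-t\int f^{-1}\!\!\sum_{\alpha,\beta\ge 2}\!u_{\alpha\beta}^2\,e^{-u}
+2\int F_1\,e^{-u}-\frac{n}{t}
=-t\int\Bigl(\bigl|f^{-1/2}u_{\alpha\beta}-\tfrac{1}{t}g_{\alpha\beta}\bigr|^2+\tfrac{1}{t^2}\Bigr)e^{-u},
\]
and the $\tfrac{1}{t^2}$ is simply the discrepancy between the $n$ in $\tfrac{n}{t}$ (from $\partial_t(-n\log t)$) and the $n-1$ diagonal terms in the tangential square. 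Your formal-limit computation actually shows this too: the $e_1e_1$ contribution $(p-1)^2\bigl(f^{p/2-1}u_{11}-\tfrac{1}{tp(p-1)}\bigr)^2\to\tfrac{1}{t^2}$ because the $u_{11}$-terms carry factors of $(p-1)$ and vanish, leaving only $\tfrac{1}{t^2p^2}\to\tfrac{1}{t^2}$.
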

Note that (\ref{1/hpar}) is the parabolic equation associated to
(\ref{eq-u}), the level-set formulation of the $1/H$-flow equation; it has
been studied in \cite{Hein}.  In view of the importance of the
entropy formula of Perelman \cite{P},
 we expect that the above result will also play a role in understanding
the  analytical properties of this nonlinear parabolic equation. In
fact, as a simple consequence of the entropy formula, one can
conclude that on manifolds with nonnegative Ricci curvature, {\it any
positive  ancient solution to (\ref{ddnonl}) must be
constant}. Similar to the case of $p=2$, the entropy $\mathcal{W}(\varphi, t)$
is closely related to the optimal $L^p$ logarithmic Sobolev
inequality (cf. \cite{DD, G}) of Euclidean space. When $M=\R^n$, the
optimal $L^p$ Sobolev logarithmic Sobolev inequality implies that
$\mathcal{W}(\varphi, t)\ge 0$ for any $v^{p-1}$ with $\int
v^{p-1}\, d\mu=1$. As another application, we prove the following statement:

\medskip

{\it Let $(M, g)$ be a complete Riemannian manifold with nonnegative
Ricci curvature. Assume that the $L^p$-logarithmic Sobolev
inequality  (\ref{log-sob1}) holds with the sharp constant on $M$
for some $p>1$. Then $M$ is isometric to $\R^n$.}

\medskip

This generalizes the $p=2$ case which was originally proved in
\cite{BCL} (see also \cite{N1} for a different proof via the entropy
formula for the linear heat equation).

The entropy formula  coupled with Ricci flow is of special interest.
This will be the subject of a forth-coming paper.

Finally we study the localized version of the sharp gradient
estimates of Li-Yau type mentioned previously for manifolds with
lower bound on the sectional curvature. This is carried out  in the
last section.

\bigskip
 {\it Acknowledgement}.  We would like to thank B. Chow for conservations,
 K Ecker and T. Ilmanen
 for their interests and  helpful discussions.

\section{The proof of Theorem \ref{main1}}
\label{sec:mainproof} Let $u$, $f$ be as above. Assume that $f>0$
over some region of $M$. As in \cite{Moser-j}, define
$$
\mathcal{L}(\psi) \doteqdot \operatorname{div} \left( f^{p/2-1}A
(\nabla \psi)\right)-pf^{p/2-1}\langle \nabla u, \nabla \psi\rangle.
$$
Here
$$
A=\id +(p-2)\frac{\nabla u\otimes \nabla u}{f}
$$
which can be checked easily to be nonnegative definite in general
and positive definite for $p>1$. Note that the operator
$\mathcal{L}$ is the linearized operator of the nonlinear equation
(\ref{eq-u}).

 The first is a computational lemma.
\begin{lemma}\label{bochner1}
\begin{equation}\label{eq-help1}
\mathcal{L}(f)=2f^{p/2-1}\left(u_{ij}^2+R_{ij} u_i
u_j\right)+\left(\frac{p}{2}-1\right)|\nabla f|^2 f^{p/2-2}.
\end{equation}
Here $u_{ij}$ is the Hessian of $u$. $R_{ij}$ is the Ricci curvature
of $M$.
\end{lemma}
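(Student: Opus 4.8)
The plan is to compute $\mathcal{L}(f)$ directly from the definition by expanding the divergence term and then applying a Bochner-type identity. First I would write $\mathcal{L}(f) = \operatorname{div}(f^{p/2-1} A(\nabla f)) - p f^{p/2-1}\langle \nabla u, \nabla f\rangle$, and expand the divergence via the product rule. Using $A(\nabla f) = \nabla f + (p-2) f^{-1}\langle \nabla u, \nabla f\rangle \nabla u$, this produces terms involving $\langle \nabla(f^{p/2-1}), A(\nabla f)\rangle$, which contributes a $(\frac{p}{2}-1) f^{p/2-2}$ times a quadratic expression in $\nabla f$ and $\langle \nabla u, \nabla f\rangle$, plus $f^{p/2-1}\operatorname{div}(A(\nabla f))$. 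The latter splits into $f^{p/2-1}\Delta f$ together with the divergence of the rank-one correction $(p-2) f^{-1}\langle \nabla u, \nabla f\rangle \nabla u$.

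The key identity is the Bochner formula applied to $u$: since $\Delta f = \Delta |\nabla u|^2 = 2 u_{ij}^2 + 2\langle \nabla u, \nabla \Delta u\rangle + 2 R_{ij} u_i u_j$, I can substitute for $\Delta f$. I then need to eliminate $\Delta u$ and $\nabla \Delta u$ using the PDE. Recall from \eqref{eq-u2} that $u$ satisfies $(\frac{p}{2}-1) f^{-1}\langle \nabla f, \nabla u\rangle + \Delta u = f$ (after dividing \eqref{eq-u2} by $f^{p/2-1}$), so $\Delta u = f - (\frac{p}{2}-1) f^{-1}\langle \nabla f, \nabla u\rangle$; differentiating this in the direction $\nabla u$ gives $\langle \nabla u, \nabla \Delta u\rangle$ in terms of $f$, $\nabla f$, Hessian of $f$ contracted with $\nabla u \otimes \nabla u$, and lower-order pieces. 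The divergence of the rank-one term $\operatorname{div}((p-2) f^{-1}\langle \nabla u, \nabla f\rangle \nabla u)$ will, after the product rule, contribute $(p-2)\langle \nabla u, \nabla f\rangle f^{-1}\Delta u$ plus a Hessian term $(p-2) f^{-1} u_{ij} u_i f_j$ (note $u_{ij}u_i = \frac12 f_j$, so this is $(p-2) f^{-1} \cdot \frac12 |\nabla f|^2$ — wait, more carefully $u_{ij}u_i f_j = \frac12 \langle \nabla f, \nabla f\rangle$) plus a term with $\nabla\langle\nabla u,\nabla f\rangle$ paired with $\nabla u$, which again brings in second derivatives of $f$ and $u$. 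The point of the computation is that all the terms involving $\nabla f$ that are not of the form $|\nabla f|^2$, and all the terms involving $\Delta u$ and $\nabla \Delta u$, conspire to cancel against each other and against the $-pf^{p/2-1}\langle \nabla u, \nabla f\rangle$ correction, leaving exactly $2f^{p/2-1}(u_{ij}^2 + R_{ij}u_iu_j) + (\frac{p}{2}-1)|\nabla f|^2 f^{p/2-2}$.

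The main obstacle I expect is the bookkeeping in this cancellation: there are several sources of terms quadratic in $\nabla f$ with coefficients that are rational in $p$ (coming from the $f^{p/2-1}$ weight, the $A$-correction, and the substitution for $\nabla\Delta u$), and one must verify they sum to precisely $\frac{p}{2}-1$ and not something else. Likewise the terms linear in $\langle \nabla u,\nabla f\rangle$ with a factor of $f$ must cancel exactly. A clean way to organize this is to first record the two contractions $u_{ij}u_i = \frac12 f_j$ and $u_{ij}u_i u_j = \frac12 \langle \nabla f, \nabla u\rangle$, then compute everything in an orthonormal frame, grouping terms by their structure (pure Hessian-squared, Ricci, $|\nabla f|^2$, $f\langle\nabla u,\nabla f\rangle$, $\langle\nabla u,\nabla f\rangle^2/f$) and checking each group separately. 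Once the algebra is organized this way the identity should fall out; I would present it as a frame computation rather than invariantly to keep the index manipulations transparent.
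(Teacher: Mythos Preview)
Your proposal is correct and follows essentially the same approach as the paper: expand $\mathcal{L}(f)$ directly, invoke the Bochner identity $\Delta f = 2u_{ij}^2 + 2\langle \nabla u, \nabla \Delta u\rangle + 2R_{ij}u_iu_j$, and then differentiate the equation \eqref{eq-u2} in the direction $\nabla u$ to eliminate the $\langle \nabla u, \nabla \Delta u\rangle$ term, after which the remaining pieces collapse to the stated formula. The paper organizes the cancellation slightly differently (it writes out the full expansion of $\mathcal{L}(f)$ first and then subtracts twice the differentiated PDE identity rather than grouping by term type), but the substance is identical and your bookkeeping plan is sound.
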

\begin{proof} Direct calculation shows that
\begin{eqnarray*}
\mathcal{L} (f)&=& (\frac{p}{2}-1)f^{p/2-2}|\nabla f|^2
+f^{p/2-1}\Delta f +(p-2)\Delta u \langle \nabla u, \nabla f\rangle
f^{p/2-2}\\
&\quad& +(p-2)(\frac{p}{2}-1)f^{p/2-3} \langle \nabla u, \nabla
f\rangle^2 \\
&\quad& +(p-2)\left(u_{ij} f_i u_j f^{p/2-2} +f_{ij} u_i u_j
f^{p/2-2}-\langle \nabla u, \nabla f \rangle ^2
f^{p/2-3}\right)\\
&\quad & -pf^{p/2-1}\langle \nabla u, \nabla f\rangle.
\end{eqnarray*}
Using
$$
\Delta f =2u_{ij}^2 +2\langle \nabla \Delta u, \nabla u\rangle
+2R_{ij}u_i u_j
$$
and combining terms we have that
\begin{eqnarray*}
\mathcal{L} (f)&=& f^{p/2-1}\left(2u_{ij}^2+2\langle\nabla \Delta u,
\nabla u\rangle +2R_{ij}u_i u_j\right)+
(\frac{p}{2}-1)f^{p/2-2}|\nabla
f|^2 \\
&\quad &+(p-2)\Delta u \langle \nabla u, \nabla f\rangle f^{p/2-2}
+(p-2)(\frac{p}{2}-2)f^{p/2-3} \langle \nabla u, \nabla
f\rangle^2 \\
&\quad& +(p-2)\left(u_{ij} f_i u_j f^{p/2-2} +f_{ij} u_i u_j
f^{p/2-2}\right) -pf^{p/2-1}\langle \nabla u, \nabla f\rangle.
\end{eqnarray*}
Taking the gradient of both sides of (\ref{eq-u2}) and computing its product with
$\nabla u$, we have that
\begin{eqnarray*}
&\, &(\frac{p}{2}-1)(\frac{p}{2}-2)f^{p/2-3} \langle \nabla f,
\nabla u \rangle ^2 +(\frac{p}{2}-1)f^{p/2-2}\left(f_{ij}u_i u_j
+u_{ij}f_i u_j\right)\\
&\quad & +(\frac{p}{2}-1)f^{p/2-2}\Delta u \langle \nabla f, \nabla
u\rangle +f^{p/2-1}\langle \nabla \Delta u, \nabla u\rangle  =
\frac{p}{2}f^{p/2-1}\langle \nabla f, \nabla u\rangle.
\end{eqnarray*}
Combining the above two, we prove the claimed identity
(\ref{eq-help1}).
\end{proof}

Now let $\eta(x)=\theta \left(\frac{r(x)}{R}\right)$, where $\theta
(t)$ is a cut-off function such that $\theta (t)\equiv 1$ for $0\le
t \le \frac{1}{2}$ and $\theta(t)\equiv 0$ for $t\ge 1$. Furthermore, take
the derivatives of $\theta$ to satisfy
$\frac{(\theta')^2}{\theta} \le 10$ and $\theta''\ge -10\theta\ge
-10$. Here $r(x)$ denotes the distance from some fixed $x_0$. Let
$Q=\eta f$, which vanishes outside $B(x_0, R)$. At the maximum point
of $Q$, it is easy to see that
\begin{equation}\label{eq-gr0}
\nabla Q =(\nabla \eta)f +(\nabla f)\eta =0
\end{equation}
and
$$
0\ge \mathcal{L}(Q).
$$
On the other hand, at the maximum point,
\begin{eqnarray*}
\mathcal{L}(Q) &=& \eta\operatorname{div} \left(f^{p/2-1} A \nabla
f\right)-\eta p f^{p/2-1}\langle \nabla u, \nabla f\rangle \\
&\, & +f^{p/2-1}\langle A(\nabla f), \nabla \eta\rangle
+\operatorname{div} (f^{p/2} A \nabla \eta)-pf^{p/2}\langle \nabla
u, \nabla \eta \rangle\\
&=&\eta \mathcal{L}(f) -(\frac{p}{2}+1)f^{p/2}\frac{\langle A(\nabla
\eta), \nabla \eta \rangle}{\eta} +f^{p/2}\operatorname{div}(A\nabla
\eta)-pf^{\frac{p}{2}}\langle \nabla u, \nabla \eta\rangle.
\end{eqnarray*}
If  $1<p\le 2$,
$$
\frac{\langle A(\nabla \eta), \nabla \eta \rangle}{\eta}\le
\frac{|\nabla \eta|^2}{\eta}.
$$
For $p\ge2$ case we have that
$$
\frac{\langle A(\nabla \eta), \nabla \eta \rangle}{\eta}\le
(p-1)\frac{|\nabla \eta|^2}{\eta}.
$$
The next lemma estimates $\operatorname{div}(A\nabla \eta)$.

\begin{lemma}\label{cf-est1} Assume that, on $B(x_0, R)$, the sectional curvature of
$(M, g)$ satisfies $K_M \ge - K^2$. Then, at the maximum point
of $Q$,
\begin{eqnarray}\label{ieq-div}
\operatorname{div}(A\nabla \eta)&\ge& -20(n+p-2)\frac{1+KR}{R^2}
-\frac{10\max \{p-1, 1\}}{R^2}+(p-2)\langle \nabla u, \nabla
\eta\rangle  \nonumber\\
&\quad&+(p-2)\frac{p}{2}\frac{\langle \nabla u, \nabla
\eta\rangle^2}{\eta f}-(\frac{p}{2}-1)\frac{|\nabla \eta|^2}{\eta}.
\end{eqnarray}
\end{lemma}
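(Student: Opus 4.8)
The plan is to split the divergence into the contraction of $A$ with the Hessian of $\eta$ and the contraction of $\nabla\eta$ with the divergence of the tensor $A$, i.e.
\[
\operatorname{div}(A\nabla\eta)=A^{ij}\nabla_i\nabla_j\eta+(\nabla_iA^{ij})\nabla_j\eta ,
\]
and to treat these two pieces by different means: the first purely by comparison geometry, the second by a direct computation that uses the maximum-point condition \eqref{eq-gr0} together with equation \eqref{eq-u2}. Every term in \eqref{ieq-div} other than the two leading curvature/cut-off terms will come out of the second piece.

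For the second piece I would differentiate $A^{ij}=g^{ij}+(p-2)u^iu^j/f$, using the symmetry of the Hessian of $u$ (so that $u^i\nabla_iu^j=\tfrac12\nabla^jf$) and $f=|\nabla u|^2$, to get
\[
\nabla_iA^{ij}=(p-2)\left(\frac{\Delta u}{f}u^j+\frac{\nabla^jf}{2f}-\frac{\langle\nabla u,\nabla f\rangle}{f^2}u^j\right).
\]
Contracting with $\nabla\eta$ and then substituting, at the maximum point of $Q=\eta f$, the relation $\nabla f=-(f/\eta)\nabla\eta$ from \eqref{eq-gr0} and $\Delta u=f+(\tfrac p2-1)\langle\nabla u,\nabla\eta\rangle/\eta$ (obtained by solving \eqref{eq-u2} for $\Delta u$ and again using $\nabla f=-(f/\eta)\nabla\eta$), a short computation collecting the two $\langle\nabla u,\nabla\eta\rangle^2/(\eta f)$ contributions yields
\[
(\nabla_iA^{ij})\nabla_j\eta=(p-2)\langle\nabla u,\nabla\eta\rangle+(p-2)\frac p2\frac{\langle\nabla u,\nabla\eta\rangle^2}{\eta f}-\Big(\frac p2-1\Big)\frac{|\nabla\eta|^2}{\eta},
\]
which is precisely the explicit part of \eqref{ieq-div}.

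For the first piece write $\eta=\theta(r/R)$, so $\nabla_i\nabla_j\eta=\tfrac{\theta''}{R^2}r_ir_j+\tfrac{\theta'}{R}\nabla_i\nabla_jr$ with $r$ smooth near the maximum point; at a cut point of $x_0$ one replaces $r$ by a smooth upper barrier in the standard way, which is legitimate since $\theta$ is nonincreasing, hence $\theta'\le0$. Then $A^{ij}\nabla_i\nabla_j\eta=\tfrac{\theta''}{R^2}A^{ij}r_ir_j+\tfrac{\theta'}{R}A^{ij}\nabla_i\nabla_jr$. The scalar $A^{ij}r_ir_j=1+(p-2)\langle\nabla u,\nabla r\rangle^2/f$ lies between $1$ and $p-1$; in particular it is positive and $\le\max\{p-1,1\}$, so together with $\theta''\ge-10$ one gets $\tfrac{\theta''}{R^2}A^{ij}r_ir_j\ge-10\max\{p-1,1\}/R^2$. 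For the remaining term, the Hessian comparison theorem under $K_M\ge-K^2$ gives $\nabla_i\nabla_jr\le K\coth(Kr)\,g_{ij}$ as quadratic forms; since $A>0$ with $\operatorname{tr}A=n+p-2$, this yields $A^{ij}\nabla_i\nabla_jr\le(n+p-2)K\coth(Kr)$, and on $\operatorname{supp}\theta'$, where $r\ge R/2$, one has $K\coth(Kr)\le\tfrac1r+K\le(2+KR)/R$. Multiplying by $\theta'/R\le0$ reverses the inequality and, using $|\theta'|\le\sqrt{10}$, gives $\tfrac{\theta'}{R}A^{ij}\nabla_i\nabla_jr\ge-\sqrt{10}(n+p-2)(2+KR)/R^2\ge-20(n+p-2)(1+KR)/R^2$. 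Adding the two contributions to $A^{ij}\nabla_i\nabla_j\eta$ and then the explicit term above gives \eqref{ieq-div}.

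The comparison-geometry estimate of the first piece is routine; the one place to be careful is the second piece, where the maximum-point identity and \eqref{eq-u2} must be used in the right order so that the cross terms combine exactly into the stated expression. Note that no case distinction on the sign of $p-2$ is needed, because $A^{ij}r_ir_j$ is positive and the Hessian-of-$r$ bound only ever enters against the nonpositive factor $\theta'$.
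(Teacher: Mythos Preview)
Your proof is correct and follows essentially the same approach as the paper. The paper expands $\operatorname{div}(A\nabla\eta)$ directly and then uses \eqref{eq-gr0}, \eqref{eq-u2}, and $f_j=2u_{ij}u_i$ to simplify, arriving at the same identity $\operatorname{div}(A\nabla\eta)=A^{ij}\eta_{ij}+(p-2)\langle\nabla u,\nabla\eta\rangle+(p-2)\tfrac{p}{2}\tfrac{\langle\nabla u,\nabla\eta\rangle^2}{\eta f}-(\tfrac{p}{2}-1)\tfrac{|\nabla\eta|^2}{\eta}$ that your product-rule decomposition yields; the estimate of $A^{ij}\eta_{ij}$ via Hessian comparison is then identical. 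Your treatment is in fact slightly more careful in two respects: you address the cut-locus issue via the Calabi barrier trick (which the paper omits), and you track the constants more explicitly.
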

\begin{proof} Direct computation yields
\begin{eqnarray*}
\operatorname{div}(A\nabla \eta)&=& \Delta \eta
+(p-2)\frac{\eta_{ij} u_i u_j}{f} +(p-2)\Delta u \frac{\langle
\nabla u, \nabla \eta\rangle}{f}\\
&\quad&-(p-2)\frac{\langle \nabla u, \nabla f\rangle \langle \nabla
u, \nabla \eta\rangle}{f^2} + (p-2) \frac{u_{ij}u_i \eta_j}{f}.
\end{eqnarray*}
Now using (\ref{eq-gr0}), (\ref{eq-u2}) and that $ f_j =2u_{ij}u_i$,
we can eliminate $\Delta u$ and $\nabla f$ to arrive at
\begin{eqnarray*}
\operatorname{div}(A\nabla \eta)&=& \Delta \eta
+(p-2)\frac{\eta_{ij} u_i u_j}{f} +(p-2)\langle \nabla u, \nabla \eta\rangle\\
&\quad&+(p-2)\frac{p}{2}\frac{\langle \nabla u, \nabla \eta\rangle^2
}{f\eta} -(\frac{p}{2}-1) \frac{|\nabla \eta|^2}{\eta}.
\end{eqnarray*}
We only need to estimate the first two terms, for which we compute
$$
\eta_{ij}=\theta'\frac{ r_{ij}}{R} +\theta'' \frac{r_i r_j}{R^2}.
$$
Using the Hessian comparison theorem \cite {GW}, which states that
$r_{ij}\le \frac{1+Kr}{r} g_{ij}$, and the Laplacian comparison
theorem, we have that
$$ A_{ij} r_{ij}
 \le (n+p-2)\left(\frac{1+Kr}{r} \right).
$$
Noting that $\theta'=0$ if $r\le \frac{R}{2}$,  we have that
\begin{eqnarray*}
\Delta \eta +(p-2)\frac{\eta_{ij} u_i u_j}{f} &=&A_{ij}\eta_{ij}
\\
&\ge& -20(n+p-2)\frac{1+KR}{R^2}-10\max \{ p-1, 1\}\frac{1}{R^2}.
\end{eqnarray*}
Taken together, these estimates prove the lemma.
\end{proof}

\begin{remark} The above lemma is the only place we need to assume
that the sectional curvature of $M$ is bounded from below by $-K^2$.
We expect that by some judicious choice of cut-off function, one
may be able to prove Theorem \ref{main1} only assuming that the
Ricci curvature is bounded from below.
\end{remark}

To prove the theorem, we first estimate $\mathcal{L}(f)$ from below.
We only need to estimate it over the points where $f>0$ for our
purpose of estimating $f$ from above. Choose a local orthonormal
frame $\{e_i\}$ near any such given point so that at the given point
$\nabla u=|\nabla u| e_1$. Then $f_1 =2 u_{j1}u_j=2u_{11} u_1$ and
for $j\ge 2$, $f_j=2u_{j1}u_1$, which implies that
\begin{equation}\label{help1}
2u_{k1}=\frac{f_k}{f^{1/2}}.
\end{equation}
Now (\ref{eq-u2}) becomes
$$
\sum_{j\ge 2}u_{jj}=f-(\frac{p}{2}-1)\frac{f_1 u_1}{f}-u_{11}.
$$
Hence
\begin{eqnarray*}
\sum_{i,j=1}^n u^2_{ij}&\ge& u^2_{11}+2\sum_{j\ge 2} u_{j1}^2
+\sum_{j\ge 2}
u_{jj}^2\\
&\ge & u^2_{11}+2\sum_{j\ge 2} u_{j1}^2 +\frac{1}{n-1}(\sum_{j\ge 2}
u_{jj})^2\\
&=& \frac{n}{n-1}u^2_{11}+2\sum_{j\ge 2} u_{j1}^2+\frac{1}{n-1}f^2
+\frac{1}{n-1}(\frac{p}{2}-1)^2\frac{(f_1 u_1)^2}{f^2}\\
&\,& -\frac{2}{n-1}(\frac{p}{2}-1)f_1 u_1 -\frac{2}{n-1} f u_{11}
+\frac{2}{n-1}(\frac{p}{2}-1)\frac{f_1 u_1 u_{11}}{f}.
\end{eqnarray*}
Using (\ref{help1}), we can replace all the second derivatives of $u$
and arrive at
\begin{eqnarray}\label{help2}
\sum_{i, j=1}^n u_{ij}^2 &\ge& \frac{1}{n-1} f^2
+\frac{1}{n-1}\left(\frac{n}{4}+\frac{p}{2}-1\right)\frac{f_1^2}{f}+\frac{1}{2}\sum_{j\ge
2} \frac{f_j^2}{f} \nonumber\\
&\, & +\frac{1}{n-1}(\frac{p}{2}-1)^2 \frac{\langle\nabla f, \nabla
u\rangle^2}{f^2}-\frac{p-1}{n-1}\langle \nabla f, \nabla u \rangle\nonumber\\
&\ge& \frac{1}{n-1} f^2+a_{n, p} \frac{|\nabla
f|^2}{f}+\frac{1}{n-1}(\frac{p}{2}-1)^2 \frac{\langle\nabla f,
\nabla u\rangle^2}{f^2}-\frac{p-1}{n-1}\langle \nabla f, \nabla u
\rangle
\end{eqnarray}
where
$$
a_{n,
p}\doteqdot\min\{\frac{1}{n-1}\left(\frac{n}{4}+\frac{p}{2}-1\right),
\frac{1}{2}\}\ge 0.
$$
Hence by (\ref{eq-help1}), (\ref{eq-gr0}) we have that
\begin{eqnarray}
f^{p/2-1}\mathcal{L}(f)&\ge& \frac{2}{n-1} f^p +2a_{n, p}
f^{p-3}|\nabla f|^2 +\frac{2(p/2-1)^2}{n-1}f^{p-2}\frac{\langle
\nabla \eta, \nabla
u\rangle^2}{\eta^2} \nonumber\\
&\quad&+\frac{2(p-1)}{n-1} f^{p-1}\frac{\langle \nabla \eta, \nabla
u\rangle}{\eta}-2(n-1)K^2f^{p-1}+(\frac{p}{2}-1)f^{p-1}\frac{|\nabla
\eta|^2}{\eta^2}. \label{eq-help2}
\end{eqnarray}
Now combining the previous estimates, we have that
\begin{eqnarray*}
0&\ge& f^{p/2-1}\eta^{p-1}\mathcal{L}(Q) \\
&\ge& \frac{2}{n-1} Q^p
+Q^{p-2}\left(\frac{2(p/2-1)^2}{n-1}+\frac{(p-2)p}{2}\right)\langle \nabla u, \nabla \eta\rangle^2 \\
&\quad& +\left(\frac{2(p-1)}{n-1}-2\right)Q^{p-1}\langle \nabla u,
\nabla \eta \rangle -\left((\frac{p}{2}+1)\max
\{p-1, 1\}\right) Q^{p-1}\frac{|\nabla \eta|^2}{\eta}\\
&\quad& -\left(2(n-1)K^2+20(n+p-3)\frac{1+KR}{R^2} +\frac{10\max
\{p-1, 1\}}{R^2}  \right)Q^{p-1}.
\end{eqnarray*}
Since
$$
Q^{p-2}\left(\frac{2(p/2-1)^2}{n-1}+\frac{(p-2)p}{2}\right)\langle
\nabla u, \nabla \eta\rangle^2 \ge
-\left(\frac{2(p/2-1)^2}{n-1}+\frac{(p-2)p}{2}\right)_{-}Q^{p-1}\frac{|\nabla
\eta|^2}{\eta}
$$
and
$$\left(\frac{2(p-1)}{n-1}-2\right)Q^{p-1}\langle \nabla u,
\nabla \eta \rangle\ge -\frac{2\epsilon}{n-1}Q^p -\frac{b_{p,
n}^2(n-1)}{8\epsilon}Q^{p-1}\frac{|\nabla \eta|^2}{\eta}
$$
with $b_{p, n}=\left(\frac{2(p-1)}{n-1}-2\right)$, we have that
\begin{eqnarray*}
0&\ge& f^{p/2-1}\eta^{p-1}\mathcal{L}(Q) \\
&\ge& \frac{2(1-\epsilon)}{n-1} Q^{p}
-\frac{10}{R^2}Q^{p-1}\left(c_{p, n}+\frac{b_{p,
n}^2(n-1)}{8\epsilon}\right)\\
&\quad& -\left(2(n-1)K^2+20(n+p-3)\frac{1+KR}{R^2} +\frac{10\max
\{p-1, 1\}}{R^2} \right)Q^{p-1}
\end{eqnarray*}
where
$$
c_{p,
n}=\left(\frac{2(p/2-1)^2}{n-1}+\frac{(p-2)p}{2}\right)_{-}+\left((\frac{p}{2}+1)\max
\{p-1, 1\}\right).
$$
Here we have used $\frac{|\nabla \eta|^2}{\eta} \le \frac{10}{R^2}$.
Theorem \ref{main1} then follows easily from the above inequality.

\section {Boundary estimate}

Let  $\Omega\subset M$ be an open subset such that $\Omega^{c}$ is
compact and $\partial \Omega$ is $C^{\infty}$. Again, for $p > 1$,
we consider the $p$-harmonic equation:
\begin{equation}
\label{eq:pharmonic}
\widehat{\Lambda}_p(v):=\operatorname*{div}\left(\left|\nabla
v\right|^{p-2}\nabla v\right) = 0
\end{equation}
Any positive solution $v$ of \eqref{eq:pharmonic} gives rise to a
solution $u$ of
\begin{equation}
\label{eq:uequation} \Lambda_p(u) :=
\operatorname*{div}\left(\left|\nabla u\right|^{p-2} \nabla u\right)
- \left|\nabla u\right|^p = 0
\end{equation}
via the relationship $u = (1-p)\log v$.

Our primary objective in this section is to prove the following
boundary estimate, which corresponds to a similar result in
\cite{IH}.

\begin{proposition}
\label{prop:boundarymc} For every $\epsilon > 0$, there exists $p_0
= p(\epsilon) > 1$ such that if $u$ satisfies equation
\eqref{eq:uequation} on $\Omega$ for some $1 < p \leq p_0$, then we
have the estimate
\begin{equation}
\label{eq:boundarymc}
    \left|\nabla u\right| \leq H_{+} + \epsilon
\end{equation}
where $H$ denotes the mean curvature of $\partial\Omega$ and
$H_{+}(x) = \max\left\{ H(x), 0\right\}$.
\end{proposition}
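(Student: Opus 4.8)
The plan is to construct a local upper barrier for $|\nabla u|$ near an arbitrary boundary point $x_0 \in \partial\Omega$, exploiting that $u$ should behave, to leading order, like a function of the (signed) distance $d(x) = \operatorname{dist}(x,\partial\Omega)$ to the boundary. First I would fix $x_0 \in \partial\Omega$ and work in a small half-ball neighborhood $U = B(x_0,\rho)\cap\Omega$, on which $d$ is smooth. The natural candidate barrier is a radial profile in $d$: take $w = \phi(d)$ where $\phi:[0,\delta)\to\R$ is chosen so that (i) $\phi(0)=0$, hence $w = 0 = u$ (after normalizing the additive constant, using $u = (1-p)\log v$ and rescaling $v$) on $\partial\Omega\cap U$; (ii) $w \geq u$ on the "inner" part of $\partial U$, which is arranged by sending $\phi(t)\to+\infty$ as $t\to\delta^-$, using that $u$ is bounded on the compact piece $\overline{B(x_0,\rho/2)}\cap\Omega$; and (iii) $\Lambda_p(w)\leq 0$ in $U$, so $w$ is a supersolution of \eqref{eq:uequation}. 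Since $|\nabla w| = \phi'(d)$ and the full operator $\Lambda_p(w)$ expands, using $|\nabla d|=1$ and $\Delta d = -H_d$ (the mean curvature of the level set $\{d=\text{const}\}$, with $H_0 = H$ on $\partial\Omega$), as
$$
\Lambda_p(w) = (p-1)|\phi'|^{p-2}\phi'' - |\phi'|^{p-2}\phi' H_d - |\phi'|^p,
$$
the supersolution condition becomes the ODE inequality $(p-1)\phi'' \leq \phi'(H_d + \phi')$ (assuming $\phi'>0$).

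Next I would choose $\phi$ explicitly so that $\phi'(0)$ is as close as possible to $H_+(x_0)$. The comparison principle for $\Lambda_p$ (which applies since $A = \id + (p-2)\frac{\nabla u\otimes\nabla u}{f}$ is positive definite for $p>1$, making \eqref{eq:uequation} degenerate-elliptic of the required type) then yields $u \leq w$ on $U$, and since $u(x_0) = w(x_0) = 0$ while $w-u \geq 0$ attains its minimum $0$ at the interior-of-the-face boundary point $x_0$, the Hopf-type comparison of normal derivatives gives
$$
|\nabla u|(x_0) = \partial_\nu u(x_0) \leq \partial_\nu w(x_0) = \phi'(0).
$$
A convenient model is $\phi(t) = -\tfrac{p-1}{q}\log\!\bigl(1 - \tfrac{q}{p-1}\,a\,(\delta - t)^{?}\bigr)$-type profiles, but more simply one can take $\phi' = \psi$ solving $(p-1)\psi' = \psi(\bar H + \psi)$ with $\bar H \doteq \sup_{U}H_d$ (close to $H(x_0)$ for $\rho$ small) and prescribed $\psi(0)$; this Riccati equation blows up in finite "time" $\delta$, giving property (ii), provided $\psi(0) > 0$. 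If $H(x_0) > 0$, choosing $\psi(0) = \bar H + \e'$ works and blows up; if $H(x_0)\leq 0$, one instead takes $\psi(0) = \e'$ small and checks that, because $\bar H$ can be made close to $H(x_0)\leq 0$, the solution still reaches $+\infty$ in finite time while staying below $\e'$ near $t=0$ only if... this is the delicate case. Here is where $p_0$ enters: the blow-up time $\delta = \delta(p,\psi(0),\bar H)$ must be made smaller than the fixed geometric scale $\rho$ on which $d$ is smooth and $u$ is controlled, and as $p\searrow 1$ the factor $(p-1)$ in front of $\psi'$ shrinks, which \emph{shortens} the blow-up time — so for every target $\e>0$ there is $p_0$ such that for $1<p\le p_0$ we can take $\psi(0) = H_+(x_0)+\e$ and still have $\delta < \rho$. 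A uniform $C^0$ bound on $u$ on the relevant compact set, independent of $p$ as $p\to 1$ (available from the barrier arguments sketched in the introduction, or directly from maximum principle bounds on $v$), is needed to absorb property (ii) uniformly.

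The main obstacle is precisely the non-positive mean curvature case and the uniformity in $p$: one must show that the finite blow-up time of the Riccati comparison ODE can be controlled \emph{independently of the geometry} beyond the first-order data $H(x_0)$, and that shrinking $\rho$ (to make $\bar H$ close to $H(x_0)$ and to make $d$ smooth, with $|H_d - H| \le \omega(\rho)$) is compatible with keeping $\delta < \rho$; the degrees of freedom are $\psi(0)$, $\e$, $\rho$, and $p$, and they must be chosen in the right order — first $\e$, then $\rho$ small enough that the distance function is regular and $H_d \le H(x_0)+\e/2$, then $p_0$ small enough that the blow-up time for the ODE with data $\psi(0)=H_+(x_0)+\e$ and coefficient $(p-1)$ is less than $\rho$. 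I would also need to handle the case $\nabla u(x_0) = 0$ separately (there the estimate is trivial) and the case where $u$ might not attain a strict interior configuration, but the comparison $u\le w$ on $\overline U$ together with equality at $x_0$ suffices via a one-sided difference quotient along the inward normal, avoiding any appeal to a Hopf lemma for the degenerate operator.
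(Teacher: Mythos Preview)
Your overall strategy---build a supersolution $w=\phi(d)$ for $\Lambda_p$ that blows up in a thin collar, use comparison, then read off the normal derivative---is sound and close to the paper's, but your execution contains a sign error that manufactures the ``delicate case'' you worry about. In the paper's convention (see their computation $\Lambda_1(\widetilde w)=H-\partial_\nu\widetilde w$), one has $\Delta d = H_d$, not $-H_d$, so the supersolution inequality is $(p-1)\phi''\le \phi'(\phi'-H_d)$ rather than $(p-1)\phi''\le\phi'(H_d+\phi')$. Solving the Riccati equation $(p-1)\psi'=\psi(\psi-\bar H)$ with $\bar H=\sup_U H_d$ and $\psi(0)=H_+(x_0)+\epsilon$ then blows up in finite time in \emph{all} cases (if $H(x_0)\le 0$ then $\psi_0=\epsilon>0\ge\bar H$ gives $\psi'>0$ immediately), with blow-up time proportional to $(p-1)$; so your mechanism of shrinking $p$ to fit the blow-up inside the regular collar is correct once the sign is fixed, and the paragraph you flagged as problematic simply disappears.

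The paper takes a somewhat different route, borrowed from Huisken--Ilmanen: rather than solving a $p$-dependent ODE, it first builds a \emph{single} $p$-independent barrier for $\Lambda_1$ by choosing any smooth $\widetilde w$ with $\widetilde w|_{\partial\Omega}=0$ and $H_+<\partial_\nu\widetilde w\le H_++\epsilon$, then applying the fractional-linear trick $w=\widetilde w/(1-\widetilde w/\delta)$, which makes $w$ blow up at the level $\{\widetilde w=\delta\}$ while preserving $\Lambda_1(w)<0$. Only afterwards is $p$ brought in, via $\Lambda_p(w)=|\nabla w|^{p-1}\bigl(\Lambda_1(w)+(p-1)\,\cdots\bigr)$, choosing $p_0$ so the $(p-1)$-term is dominated on the fixed sublevel set where $w\le C+1$. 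This cleanly separates the geometric step from the $p$-perturbation and avoids any ODE analysis; your approach, by contrast, bakes the $p$-dependence into the barrier from the start and uses $p\to 1$ to force blow-up on a short scale. Both work. One point you pass over too lightly is the $p$-uniform $C^0$ bound on $u$ needed to pin $w\ge u$ on the inner boundary: this is \emph{not} immediate from ``maximum principle bounds on $v$'' and in the paper is supplied by a separate preliminary barrier estimate (Proposition~\ref{prop:boundaryball}, via interior balls and a radial comparison) combined with the interior gradient estimate of Theorem~\ref{main1}.
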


We begin by recording some simple equations.
\begin{lemma}
\label{lem:radialeqs}

Suppose $\phi: (0,\infty) \to \mathbb{R}$ is a smooth function and
$r(x) = d(x, x_0)$  for some fixed $x_0\in M$.  Fix $p \geq 1$. If
$\Sigma = \operatorname*{cut}(x_0)$, then for $x\in M
\setminus\Sigma$, we have the following formula for $w(x) =
\phi(r(x))$
\begin{align}
\nabla_i\nabla_j w &= \phi^{\prime}\nabla_i\nabla_j r +
\phi^{\prime\prime}\nabla_ir \nabla_jr \notag \\
    \widehat{\Lambda}_p(w) &= \left|\nabla w\right|^{p-2}\left(\Delta w
    + (p-2)\frac{\left(\nabla\nabla w \right)\left(\nabla w, \nabla w\right)}
    {\left|\nabla w\right|^{2}}\right)\notag\\
    \label{eq:radialharmoniceq}
                    &=      \left|\phi^{\prime}\right|^{p-2}
                    \left(\phi^{\prime}\Delta r + (p-1)\phi^{\prime\prime}\right)
\end{align}
\end{lemma}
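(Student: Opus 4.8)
The statement to prove is Lemma~\ref{lem:radialeqs}, which records formulas for the Hessian of a radial function $w(x) = \phi(r(x))$ and computes the $p$-Laplacian $\widehat{\Lambda}_p(w)$.

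\medskip

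\textit{Plan of proof.} The plan is to compute directly, working on $M\setminus\Sigma$ where $r$ is smooth and $|\nabla r|\equiv 1$. First I would establish the Hessian formula: since $w = \phi\circ r$, the chain rule for covariant derivatives gives $\nabla_i w = \phi'\,\nabla_i r$, and differentiating again yields $\nabla_i\nabla_j w = \phi'\,\nabla_i\nabla_j r + \phi''\,\nabla_i r\,\nabla_j r$, which is the first displayed identity; no curvature enters because we are differentiating a function, not a tensor along a varying direction. Taking the trace and using $\Delta r = g^{ij}\nabla_i\nabla_j r$ together with $|\nabla r|^2 = 1$ gives $\Delta w = \phi'\,\Delta r + \phi''$.

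\medskip

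Next I would expand the $p$-Laplacian. Starting from $\widehat{\Lambda}_p(w) = \operatorname{div}(|\nabla w|^{p-2}\nabla w)$, the product rule gives $\widehat{\Lambda}_p(w) = |\nabla w|^{p-2}\Delta w + \langle \nabla(|\nabla w|^{p-2}), \nabla w\rangle$. Since $\nabla_k(|\nabla w|^{p-2}) = (p-2)|\nabla w|^{p-4}(\nabla\nabla w)(\nabla w, \cdot)_k$, one obtains the intermediate formula in the statement, $\widehat{\Lambda}_p(w) = |\nabla w|^{p-2}\bigl(\Delta w + (p-2)\frac{(\nabla\nabla w)(\nabla w,\nabla w)}{|\nabla w|^2}\bigr)$. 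Now I substitute the radial expressions: $|\nabla w|^2 = (\phi')^2$ so $|\nabla w|^{p-2} = |\phi'|^{p-2}$; and $(\nabla\nabla w)(\nabla w,\nabla w) = (\phi')^2(\nabla\nabla w)(\nabla r,\nabla r)$. Using the Hessian formula and $(\nabla\nabla r)(\nabla r,\nabla r) = 0$ (which follows from differentiating $|\nabla r|^2 = 1$), this reduces to $(\nabla\nabla w)(\nabla w,\nabla w) = (\phi')^2\phi''$. Plugging in and using $\Delta w = \phi'\Delta r + \phi''$ gives $|\phi'|^{p-2}\bigl(\phi'\Delta r + \phi'' + (p-2)\phi''\bigr) = |\phi'|^{p-2}\bigl(\phi'\Delta r + (p-1)\phi''\bigr)$, the claimed formula.

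\medskip

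\textit{Main obstacle.} This lemma is essentially a bookkeeping exercise, so there is no serious obstacle; the only points requiring a little care are the two elementary facts $|\nabla r| \equiv 1$ and $(\nabla\nabla r)(\nabla r, \cdot) = 0$ on $M\setminus\Sigma$, and ensuring that all manipulations are valid only where $\phi' \neq 0$ so that $|\nabla w|^{p-2}$ makes sense for $p < 2$ (this is implicit in the statement and will be used only at points where $w$ is being compared as a barrier). One should also note that $r$ is smooth precisely off the cut locus $\Sigma$, which is why the formulas are asserted only there.
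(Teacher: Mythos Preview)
Your proposal is correct and follows the natural direct computation; the paper itself does not supply a proof of this lemma, merely stating it as ``recording some simple equations,'' so your argument is exactly the routine verification the authors had in mind.
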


Now we prove a preliminary estimate that will assist us in the proof
of Proposition \ref{prop:boundarymc}.
\begin{proposition}
\label{prop:boundaryball} Suppose $u$ is a solution to
\eqref{eq:uequation} on $\Omega$ for some $1 < p < n$ with $u = 0$
on $\partial\Omega$.  Define
\[
\beta(x) := \sup\left\{\;r > 0\; \left| \; \exists y\in \Omega^c \;
\mbox{with}\; B(y, r) \subset \Omega^c\; \mbox{ and }\; x\in
\partial B(y,r)\; \right. \right\},
\]
\[
\iota = \min_{x\in\Omega^c}\operatorname*{inj}(x)
\]
and put
\[
R = \min\left\{\inf_{x\in\partial\Omega}\beta(x),
\frac{\iota}{3}\right\}.
\]
Choose $K\geq 0$ so that $\operatorname*{Rc}\geq -(n-1)Kg$ on
\[
        S_{2R} := \left\{\;x\in M\;\left|\; d\left(x, \Omega^c\right)\leq 2R\;\right. \right\}
\]
Then
\begin{equation}
\label{eq:boundaryball}
    \left|\nabla u\right|\leq \frac{n-1}{R}\left(1+2\sqrt{K}R\right)
    \left(1-2^{\frac{n-p}{1-p}}\right)^{-1}
\end{equation}
on $\partial\Omega$.
\end{proposition}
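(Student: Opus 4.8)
The plan is a barrier argument in the spirit of \cite{IH}. Fix a point $x_0\in\partial\Omega$. Since $u\equiv 0$ on $\partial\Omega$, $\nabla u(x_0)$ is normal to $\partial\Omega$; and since the associated positive $p$-harmonic function $v=e^{-u/(p-1)}$ equals $1$ on $\partial\Omega$, the maximum principle gives $v\le 1$, i.e.\ $u\ge 0$ on $\Omega$, so $\nabla u(x_0)$ points into $\Omega$ and it suffices to bound $\partial_\nu u(x_0)$ from above, where $\nu$ is the inward unit normal. Because $\inf_{\partial\Omega}\beta\ge R$, one finds $y\in\Omega^c$ with $B(y,R)\subset\Omega^c$ and $x_0\in\partial B(y,R)$: take an interior ball of radius $\rho\ge R$ touching $\partial\Omega$ at $x_0$ and slide its center a distance $R$ along the minimizing geodesic toward $x_0$, then apply the triangle inequality. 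Put $r(x)=d(x,y)$. Since $R\le\iota/3$ and $y\in\Omega^c$ we have $\operatorname{inj}(y)\ge\iota\ge 3R$, so $r$ is smooth on the closed annulus $\mathcal{A}=\{R\le r\le 2R\}$, and $\mathcal{A}\subset S_{2R}$, so on $\mathcal{A}$ the Laplacian comparison theorem applied to $\operatorname{Rc}\ge -(n-1)Kg$ gives $\Delta r\le (n-1)\sqrt{K}\coth(\sqrt{K}\,r)\le \frac{(n-1)(1+\sqrt{K}\,r)}{r}\le \frac{m}{r}$, where $m:=(n-1)(1+2\sqrt{K}\,R)$.

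Next I would construct a radial supersolution $w=\phi(r)$ on $D:=\Omega\cap\{R<r<2R\}$, with $\phi\colon[R,2R)\to[0,\infty)$ smooth, $\phi(R)=0$, $\phi'>0$, $\phi(r)\to+\infty$ as $r\to 2R^-$, and $\Lambda_p(w)\le 0$ on $D$. By Lemma~\ref{lem:radialeqs},
\[
\Lambda_p(w)=(\phi')^{p-2}\big(\phi'\Delta r+(p-1)\phi''-(\phi')^2\big),
\]
so, using $\Delta r\le m/r$ and $\phi'>0$, it suffices that $\tfrac{m}{r}\phi'+(p-1)\phi''-(\phi')^2\le 0$, and I would take $\phi$ to satisfy this with equality. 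Substituting $\phi'=1/h$ linearizes the equation to $(p-1)h'-\tfrac{m}{r}h+1=0$, with general solution $h=\tfrac{1}{m-(p-1)}\big(r+Cr^{m/(p-1)}\big)$; choosing $C$ so that $h(2R)=0$ gives, after simplification, $h(r)=\tfrac{r}{m-(p-1)}\big(1-(r/(2R))^{(m-p+1)/(p-1)}\big)$. Since $1<p<n$ forces $m-(p-1)>0$ and $(m-p+1)/(p-1)>0$, $h$ is positive on $[R,2R)$ with $h(2R)=0$, so $\phi'=1/h$ is smooth and positive there; setting $\phi(r):=\int_R^r h(s)^{-1}\,ds$ gives the desired $\phi$, and $h(2R)=0$ forces $\phi\nearrow+\infty$ at $2R$. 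In particular $\partial_\nu w(x_0)=\phi'(R)=\tfrac{m-(p-1)}{R}\big(1-2^{-(m-p+1)/(p-1)}\big)^{-1}$.

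Finally I would run the comparison on $D$, which has compact closure in $\overline{B(y,2R)}$. On $\partial\Omega\cap\overline{\mathcal{A}}$ one has $r\ge R$ (because $B(y,R)\subset\Omega^c$), hence $w=\phi(r)\ge\phi(R)=0=u$; and since $w\to+\infty$ at $\{r=2R\}$ while $u$ is bounded on the compact set $\overline{D}$, also $w\ge u$ near $\{r=2R\}$ — formally, compare on $\Omega\cap\{R<r<2R-\delta\}$ and let $\delta\to 0$. As $\Lambda_p(w)\le 0=\Lambda_p(u)$, the comparison principle for $\Lambda_p$ (equivalently, for the $p$-Laplacian $\widehat{\Lambda}_p$ via $v=e^{-u/(p-1)}$; cf.\ \cite{Moser-j}) gives $w\ge u$ on $D$. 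Since $w(x_0)-u(x_0)=0$, $x_0$ minimizes $w-u$ over $\overline{D}$, so the derivative in the inward direction $\nu=\nabla r(x_0)$ is nonnegative: $\partial_\nu u(x_0)\le\partial_\nu w(x_0)=\phi'(R)$. Hence $|\nabla u(x_0)|=\partial_\nu u(x_0)\le \tfrac{m-(p-1)}{R}\big(1-2^{-(m-p+1)/(p-1)}\big)^{-1}$, and since $m\ge n-1$ we have $m-(p-1)\le m$ and $(m-p+1)/(p-1)\ge (n-p)/(p-1)$, so this is $\le \tfrac{m}{R}\big(1-2^{(n-p)/(1-p)}\big)^{-1}=\tfrac{n-1}{R}(1+2\sqrt{K}\,R)\big(1-2^{(n-p)/(1-p)}\big)^{-1}$, which is \eqref{eq:boundaryball}. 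The step I expect to require the most care is pinning down the barrier ODE and checking all the sign and monotonicity conditions — this is exactly where the hypothesis $1<p<n$ enters — together with the mild technicality of handling the infinite boundary value of $w$ by exhaustion; the comparison principle itself is standard.
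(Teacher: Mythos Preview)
Your argument is correct and follows the same barrier strategy as the paper: pick $y\in\Omega^c$ with $B(y,R)\subset\Omega^c$ and $x_0\in\partial B(y,R)$, build a radial comparison function on $\{R\le r\le 2R\}$ via Laplacian comparison, and read off the normal derivative at $x_0$. The execution differs in two ways. First, the paper works on the $v$-side, constructing a bounded radial \emph{sub}solution $\widetilde w=\phi(r)$ of $\widehat{\Lambda}_p$ with $\phi(R)=1$, $\phi(2R)=0$, extending it by zero to compare $w\le v$ on all of $\Omega$, and only afterwards passing to $z=(1-p)\log w$ for the derivative comparison; you build a \emph{super}solution of $\Lambda_p$ for $u$ directly, which is more streamlined but forces the blow-up at $r=2R$ and the accompanying exhaustion argument. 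Second, the paper keeps the bound $\Delta r\le (n-1)(\sqrt{K}+1/r)$ and the corresponding $\sigma(r)=\bigl(Re^{-\sqrt{K}(r-R)}/r\bigr)^{(n-1)/(p-1)}$, then must estimate $\alpha=\bigl(\int_R^{2R}\sigma\bigr)^{-1}$ via an integration by parts; your coarser constant bound $\Delta r\le m/r$ with $m=(n-1)(1+2\sqrt{K}R)$ turns the supersolution condition into an Euler-type ODE whose closed-form solution gives $\phi'(R)$ with no integral to estimate. One small point worth making explicit in your write-up: since $\Omega$ is open, $\Omega^c$ is closed, so $B(y,R)\subset\Omega^c$ in fact forces $\overline{B(y,R)}\subset\Omega^c$ and hence $\{r=R\}\cap\Omega=\emptyset$; this is precisely what ensures $\partial D\subset\partial\Omega\cup\{r=2R\}$ in your comparison step.
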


We should remark that in the case that $M$ has nonnegative Ricci
curvature, the above estimate can be sharpened to
$$
 \left|\nabla u\right|\leq \frac{n-1}{R},
$$
which generalizes the one proved in \cite{Moser-j}.
\begin{proof}
    Consider the function $\sigma:(0,\infty)\to\mathbb{R}$ defined by
\[
        \sigma(r) := \left(\frac{Re^{-\sqrt{K}(r-R)}}{r}\right)^{\frac{n-1}{p-1}}.
\]
If we put
\[
    \phi(r) := 1 - \alpha \int_{R}^r\,\sigma(\rho)\,d\rho
\]
for $\alpha > 0$ to be determined later, we have $\phi(R) = 1$ and
\begin{equation}
\label{eq:phiprimequotient}
\frac{\phi^{\prime\prime}}{\phi^{\prime}} =
-\left(\frac{n-1}{p-1}\right)\left(\sqrt{K} + \frac{1}{r}\right).
\end{equation}
Now let $u$ be a solution of \eqref{eq:uequation} for some $1 < p <
n$. Fix $x_0\in \partial \Omega$, and choose $y_0\in \Omega^c$ such
that $B(y_0, R) \subset \Omega^c$ and $x_0\in \partial B(y_0, R)$.
Setting $r(x) = d(y_0, x)$ and $\widetilde w(x) = \phi(r(x))$, we
have, from \eqref{eq:radialharmoniceq} and
\eqref{eq:phiprimequotient}
\begin{align}
\widehat{\Lambda}_p(\widetilde{w}) &=
\left|\phi^{\prime}\right|^{p-2}\phi^{\prime}
\left(\Delta r +(p-1)\frac{\phi^{\prime\prime}}{\phi^{\prime}}\right)\notag\\
    \label{eq:showssub}       &= \left|\phi^{\prime}\right|^{p-2}\phi^{\prime}
    \left(\Delta r - (n-1)\left(\sqrt{K} + \frac{1}{r}\right)\right)
\end{align}
away from the cut locus of $y_0$.  Since $\phi^{\prime} < 0$,
applying the Laplacian comparison theorem to \eqref{eq:showssub}
implies that $\widehat{\Lambda}_p(\widetilde{w}) \geq 0$, i.e., that
$\widetilde{w}$ is a subsolution to \eqref{eq:pharmonic} on $B(y_0,
2R)\setminus\{y_0\}$ (recall that $3R \leq
\operatorname*{inj}(y_0)$). Now choose $\alpha$ to be
\begin{equation}
\label{eq:alphaeq}
    \alpha := \left(\int_{R}^{2R}\,\sigma(\rho)\,d\rho\right)^{-1}
\end{equation}
Then we have $\widetilde{w}(x) = \phi\left(2R\right) \equiv 0 $ on
$\partial B(y_0, 2R)$ and $\widetilde{w}(x) \leq1$ on
$\partial\Omega$. Extend $\widetilde{w}$ to a subsolution on all of
$M\setminus\{y_0\}$ by
\[
    w(x) := \begin{cases}
            \widetilde{w}(x) &\mbox{if}\;\; x\in B(y_0, 2R)\setminus\{y_0\}\\
            0 &\mbox{if}\;\;x\in B(y_0, 2R)^c.
        \end{cases}
\]
Since the function $v = \exp(u/(1-p))$ satisfies $v \equiv 1$ on
$\partial \Omega$ and $v > 0$ in $\Omega$, it follows from the
comparison principle
 that $w \leq v$ on $\Omega$.  In particular,
we have $0 < w \leq v$ on $\overline{B(y_0, (3/2)R)}\cap\Omega$,
Thus if we define $z$ by $z(x) :=(1-p)\log w(x)$, $z$ is a
non-negative supersolution to \eqref{eq:uequation} on the annular
region
\[
    A := A_{R, \frac{3R}{2}} := \overline{B\left(y_0, 3R/2\right)}\setminus B\left(y_0, R\right) ,
\]
satisfying $z(x) > 0$. If we extend $u$ to be identically $0$ in
$\Omega^c$, we have $z\geq u$ on $\partial A$, and hence in all of
$A$ by the comparison principle.  In particular, at $x_0$, we have,
for all smooth paths $\gamma:(-\epsilon, \epsilon)\to M$ with
$\gamma(0) = x_0$ and $\gamma^{\prime}(0) = -\nu(x_0)$ (where $\nu$
is the unit normal to $\partial\Omega$ at $x_0$ pointing into
$\Omega^c$),
\begin{equation}
\label{eq:boundderivcomp}
    \frac{u(\gamma(h)) - u(x_0)}{h}  = \frac{u(\gamma(h))}{h} \leq \frac{z(\gamma(h))}{h}
        = \frac{z(\gamma(h)) - z(\gamma(x_0))}{h}.
\end{equation}
Hence
\[
    |\nabla u|(x_0) \leq |\nabla z|(x_0).
\]
It remains, then, just to estimate the gradient of $z$ at $x_0$.

Recall from the definition of $z$ that
\begin{align}
    |\nabla z|(x_0) &= (p-1) \frac{|\phi^{\prime}(R)|}{\phi(R)}\notag\\
\label{eq:alpha}                                 &= (p-1)\alpha.
\end{align}
Integrating by parts, we see
\begin{equation*}
    \int_{R}^{2R}\,\sigma(\rho)\,d\rho =\left(\frac{p-1}{n-1}\right)\left(R-2R\sigma(2R)\right) -
    \sqrt{K}\frac{n-1}{n-p}\int_R^{2R}\,\rho\sigma(\rho)\,d\rho.
\end{equation*}
So
\begin{align*}
    \left(1 + 2\left(\frac{n-1}{n-p}\right)\sqrt{K}R\right)\int_R^{2R}\,\sigma(\rho)d\rho
     &\geq \int_{R}^{2R}\,\left(1+ \left(\frac{n-1}{n-p}\right)\sqrt{K}\rho\right)\sigma(\rho)d\rho\\
     &= R\left(\frac{p-1}{n-p}\right)\left(1-2\sigma(2R)\right).
\end{align*}
Since
\[
    2\sigma(2R) \leq 2^{\frac{n-p}{1-p}},
\]
we have
\begin{align*}
    \alpha & = \left(\int_R^{2R}\,\sigma(\rho)\,d\rho\right)^{-1}\\
                                &\leq \frac{n-p}{(p-1)R}\left(1  +
                                 2\frac{n-1}{n-p}\sqrt{K}R\right)
                                 \left(1 - 2^{\frac{n-p}{1-p}}\right)^{-1},
\end{align*}
and the desired inequality \eqref{eq:boundaryball} follows from
\eqref{eq:alpha}.
\end{proof}

Now we turn to the proof of Proposition \ref{prop:boundarymc}.  We
use the device in Lemma 3.4 of \cite{IH} to obtain a supersolution
for the solution $u$, using the rudimentary control near the
boundary we have already obtained in the previous Proposition in
concert with the interior gradient estimates of Theorem \ref{main1}
to ensure that the growth of our supersolution is sufficiently
rapid.
\begin{proof}[Proof of Proposition \ref{prop:boundarymc}]
Let $H$ denote the mean curvature of $\partial \Omega$ and set
$H_{+} = \max\{H, 0\}$. Choose a smooth function $\widetilde{w}$
with $\widetilde{w}\equiv 0$ on $\partial\Omega$ such that
\begin{equation}
\label{eq:estonbdry}
    H_{+} < \frac{\partial \widetilde{w} }{\partial \nu} \leq H_{+} + \epsilon
\end{equation}
on $\partial\Omega$.  Then $\left|\nabla \widetilde{w}\right| > 0$
and
\begin{align*}\
    \Lambda_1(\widetilde{w}) &= \frac{\Delta \widetilde{w}}{\left|\nabla\widetilde{w}\right|}
        - \frac{\left(\nabla\nabla \widetilde{w}\right)\left(\nabla\widetilde{w},
         \nabla\widetilde{w}\right)}
                    {\left|\nabla \widetilde{w}\right|^{3}} - \left|\nabla\widetilde{w}\right|\\
                        &=H - \frac{\partial \widetilde{w}}{\partial \nu}\\
                        &< 0
\end{align*}
on $\partial \Omega$; indeed, for suitably small $\delta > 0$, both
inequalities hold strictly on $U_{\delta}$, which we define to be
the components of $\left\{0 \leq \widetilde{w} < \delta\right\}$
containing $\partial\Omega$. Now introduce
\[
    w: = \frac{\widetilde{w}}{1 - \widetilde{w}/\delta}.
\]
Then also $w\equiv 0$ on $\partial\Omega$ and $w(x)\to \infty$ as
$x$ approaches $\partial U_{\delta}\setminus \partial{\Omega}$.
Moreover, a computation shows
\[
    \Lambda_1(w) = \Lambda_{1}(\widetilde{w}) +
        \left(1 - \frac{1}{\left(1- \widetilde{w}/\delta\right)^{2}}\right)
        \left|\nabla \widetilde{w}\right|,
\]
so $\Lambda_1(w) < 0$ on $U_{\delta}$ as well.  By
Proposition \ref{prop:boundaryball}, and the interior estimates in
Theorem \ref{main1}, for any precompact open set $V$ containing
$\Omega^c$, there exists a constant $C= C(K_M, V)$ depending on the
geometry of $\partial\Omega$ and the lower bound of the sectional
curvature in $V$
 (but independent of $p$!) such that any $u$ satisfying \eqref{eq:uequation}
with $u\equiv 1$ on $\partial\Omega$ satisfies $u(x) \leq C$ on $V$.
In particular, if we choose such a $C = C(\delta)$ for $U_{\delta}$,
we see that $w \geq u$ for any such $u$ on $\partial U_{\delta}$.
More precisely we can define $ \widetilde U_{C}$ to be the
components of $\{0\le w\le C\}$ in $U_{\delta}$. Clearly $w\ge u$ on
$\partial \widetilde U_{C+1}$.

But we may also compute
\[
        \Lambda_p(w) = \left|\nabla w\right|^{p-1}\left( \Lambda_1(w)
            + (p-1)\frac{\left(\nabla\nabla w\right)\left(\nabla w, \nabla w\right)}
            {\left|\nabla w\right|^{3}}\right),
\]
which shows that choosing $p_0$ sufficiently close to $1$, depending
on $C(\delta)$ (and implicitly on our original $\epsilon$), as well
as bounds on our (fixed) barrier $w$, we have that
$\Lambda_p(w) < 0$ for all $1 < p \leq p_0$ on $\widetilde
U_{C+1}$. Therefore, it follows from the comparison principle for
equation \eqref{eq:uequation} that $u\leq w$ on
$\overline{\widetilde U}_{C+1}$. Hence, recalling
\eqref{eq:estonbdry}, we have
\[
    \frac{\partial u}{\partial \nu}\leq \frac{\partial w}{\partial \nu}
    = \frac{\partial\widetilde{w}}{\partial \nu} \leq H_{+} + \epsilon
\]
on $\partial\Omega$ as claimed.
\end{proof}

\section{$1/H$-flow}

The $1/H$ (or {\it inverse mean curvature}) flow is a parabolic evolution equation
for hypersurfaces.  Given an initial embedding $X_0:N^{n-1}\to M^n$, the flow may be defined
parametrically by $N_t= X_t(N)$ where
\[
   \pd{X_t}{t} = \frac{1}{H}\nu.
\]
Here $\nu$ denotes the outward normal.  Alternatively, the flow may be defined in terms of
a level set formulation, in which case $N_t$ is given by $\partial\{u < t\}$ for a $u$ satisfying
(\ref{1/h}). A theory of weak solutions to \eqref{1/h} was established in
\cite{IH}, based on a variational principle involving the functional
$$
J_u(w; K)\doteqdot \int_K \left(|\nabla w|+w|\nabla u|\right)\, d\mu
$$
for any precompact subset $K\subset \Omega$; the reader is encouraged to consult
this paper for further details and the motivation behind this theory.
In this section, our interest is in the general problem of the existence of weak solutions
to \eqref{1/h}, particularly in the case in which $N_0$ is the compact
boundary of an end $\Omega$ of $M$.

\begin{definition} A function $u\in
C_{\operatorname{loc}}^{0,1}(\Omega)$ is called a weak solution of
(\ref{1/h}) if for every precompact set $K\subset \Omega$ and every
$w\in C_{\operatorname{loc}}^{0,1}(\Omega)$ with $w=u$ in $\Omega
\setminus K$, the inequality
\begin{equation}\label{1/h-weak}
J_u(u; K) \le J_u(w; K)
\end{equation}
holds. A weak solution is called proper if $\lim_{x\to \infty} u(x)
=+\infty$.
\end{definition}

In \cite{Moser-j}, the functional
$$
J_{u}^p(w; K)\doteqdot \int_K \left(\frac{1}{p}|\nabla w|^p
+w|\nabla u|^p\right)\, d\mu
$$
was introduced for every precompact set $K\subset \Omega$. It was
shown that if $v$ is $p$-harmonic and $u=-(p-1)\log v$, then
\begin{equation}\label{p-energy}
J^p_u(u; K)\le J_u^p(w; K)
\end{equation}
for every $w\in W^{1, p}_{\operatorname{loc}}(\Omega)$ satisfying
$w=u$ in $\Omega\setminus K$.
By a compactness-type argument
which originated in \cite{IH}, it was also shown (p. 82 of \cite{Moser-j})  that
once one has a uniform estimate on $|\nabla u|$ that is independent of $p$
as $p\to 1$, one can obtain a weak solution to (\ref{1/h}) from the limit of $u$ as $p\to 1$.
To ensure that the solution one obtains is proper, one needs further estimates.  In the case
of \cite{Moser-j}, these are furnished by comparison with explicit solutions to
\eqref{eq-u} on $\mathbb{R}^n$.  In our case, we derive suitable estimates from
results of \cite{Ho} (see also
\cite{LT}).

First we need to recall the notions of $p$-Green's functions and
$p$-nonparabolicity. Let $G(x, z)$ be the Green's function centered
at $z\in M$.  One can construct $G$ via a compact
exhaustion as in \cite{LT} for the case $p=2$; this
was done in \cite{Ho}. To ensure that the Green's
function so obtained is positive, one must impose conditions on the end $\Omega$.
For the statement of these conditions, we will need to recall
 the notion of {\it homogeneous} ends from \cite{Ho}, and
the associated {\it volume doubling} (\textbf{VD}) condition and {\it
weak Neumann Poincar\'e} (\textbf{WNP}) inequality.

Let $o\in M$ be a fixed point.
A manifold $M$
(or an end $\Omega$ of $M$)  is said to have  property (\textbf{VD})
if there exists $C_1>0$ such that for any $x\in M $ (or $\in
\Omega$)
$$
V(x, 2r)\le C_1 V(x, r).
$$
Here $V(x, r)$ denotes the volume of the ball $B(x, r)$.  It is said to have
property (\textbf{WNP}) if there exists $C_2(p)>0$ such that
$$
\frac{1}{V(x, r)}\int_{B(x, r)} |u-\bar{u}|\le C_2r
\left(\frac{1}{V(x, r)}\int_{B(x, 2r)}|\nabla u|^p\right)^{1/p}
$$
where $\bar{u}=\frac{1}{V(x, r)}\int_{B(x, r)}u$. For any subset
$\Omega$ we define $\Omega(t) \doteqdot \Omega \setminus B(o, t)$.

\begin{definition} Assume that  (\textbf{VD}) and
(\textbf{WNP}) hold on $\Omega$. The end $\Omega$ is called
$p$-homogeneous  if there exists $C_3$ such that
\begin{equation}\label{vc}
V(\Omega\cap B(o, t))\le C_3 V(x, \frac{t}{8})
\end{equation}
for every $x\in \partial \Omega(t)$. $\Omega$ is called uniformly
homogeneous if $C_2(p)\le C_2'$ for some $C_2'$ independent of $p$ as
$p\to 1$.
\end{definition}
As in \cite{LT}, we denote the above volume comparison condition
(\ref{vc}) by (\textbf{VC}).

\begin{definition} A end $\Omega$ is called $p$-nonparabolic, for some $p>1$, if
\begin{equation}\label{pnp}
\int_1^\infty \left(\frac{t}{V(\Omega\cap B(o,
t))}\right)^{1/(p-1)}\, dt < \infty.
\end{equation}
\end{definition}

Tracing the proof of Proposition 5.7 in \cite{Ho} we have the
following estimate on $p$-Green's function. Note that the uniformly
homogeneous condition implies a uniform Harnack constant for positive
$p$-harmonic functions in the Harnack inequality (2.23) of \cite{Ho}.

\begin{proposition} Assume that $\Omega$ is uniformly homogeneous and $p$-nonparabolic. Then
there exists a positive Green's function $G(x, z)$ on $M$.
Furthermore, there exists a positive constant $C_4$ independent of $p$
(as $p\to 1$) such that
\begin{equation}\label{g-est}
G(x, z)\le \frac{C_4}{(p-1)^2} \int_{2r}^\infty
\left(\frac{t}{V(\Omega\cap B(o,t))}\right)^{1/(p-1)}\, dt
\end{equation}
for every $x\in \Omega(r)$.
\end{proposition}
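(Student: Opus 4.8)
The plan is to trace the proof of Proposition 5.7 of \cite{Ho}, now retaining the dependence of every constant on $p$ as $p\to 1$. Two features of the hypotheses make this bookkeeping go through. First, the volume comparison \eqref{vc} built into $p$-homogeneity lets us, wherever \cite{Ho} works with an intrinsic ball $B(x,s)$ centered at a point $x\in\partial\Omega(t)$, pass to the set $\Omega\cap B(o,t)$ at the cost of the single fixed factor $C_3$; this is what turns the intrinsic volumes of \cite{Ho} into the quantities $V(\Omega\cap B(o,\cdot))$ appearing in \eqref{g-est}. Second, the \emph{uniform} homogeneity assumption $C_2(p)\le C_2'$ guarantees that the Harnack constant in the Harnack inequality (2.23) of \cite{Ho} for positive $p$-harmonic functions stays bounded as $p\to 1$; this uniform Harnack inequality is used repeatedly to upgrade a bound at one point of $\partial\Omega(t)$ to a bound on all of $\partial\Omega(t)$.

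\emph{Construction of $G$.} Exhaust $M$ by precompact open sets $D_1\Subset D_2\Subset\cdots$ with $z\in D_1$ and $\bigcup_i D_i=M$. On each $D_i$ one obtains, by solving the capacitor problems for $\widehat{\Lambda}_p$ on $D_i\setminus\overline{B(z,\rho)}$ and letting $\rho\to 0$ --- exactly as for $p=2$ in \cite{LT} and in general in \cite{Ho} --- a positive function $G_i$ that is $p$-harmonic on $D_i\setminus\{z\}$, vanishes on $\partial D_i$, carries the standard singularity at $z$, and has unit flux through small geodesic spheres about $z$. The comparison principle for $\widehat{\Lambda}_p$ forces $G_i\le G_{i+1}$, so $G:=\lim_i G_i$ exists with values in $(0,+\infty]$. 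The capacity estimate of the next step applies equally to each $G_i$ and uses only \eqref{pnp}; it bounds $G_i$, uniformly in $i$, by the right-hand side of \eqref{g-est}. Hence $G$ is finite, and by interior regularity and the Harnack inequality it is a positive $p$-harmonic function on $M\setminus\{z\}$ with the correct singularity --- the $p$-Green's function.

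\emph{The estimate \eqref{g-est}.} Fix $r$ large enough that $\partial\Omega(r)=\Omega\cap\partial B(o,r)$ separates $z$ from infinity, and set $g=G(\cdot,z)$. Since $g$ is $p$-harmonic on $\Omega(r)$ and tends to $0$ at infinity by \eqref{pnp}, the maximum principle gives $g\le a:=\sup_{\partial\Omega(r)}g$ throughout $\Omega(r)$, so it suffices to bound $a$. Let $\kappa=\operatorname{cap}_p(\Omega(r))$ be the $p$-capacity of the exterior condenser (inner boundary $\partial\Omega(r)$, outer boundary at infinity), positive and finite by \eqref{pnp}. The uniform Harnack inequality gives $g\ge c\,a$ on $\partial\Omega(r)$ for a fixed $c>0$; using $\min(g/(c\,a),1)$ as a test function for $\kappa$, together with the flux identity $\int_{\{g=s\}}|\nabla g|^{p-1}\,dA=1$ and the co-area formula, yields $\kappa\le(c\,a)^{1-p}$, hence $a\le c^{-1}\kappa^{-1/(p-1)}$. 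It remains to bound $\kappa^{-1/(p-1)}$: decomposing the exterior condenser into the dyadic shells $N_k=\Omega(2^{k+1}r)\setminus\overline{\Omega(2^{k+2}r)}$ and invoking the series law for $p$-capacity gives $\kappa^{-1/(p-1)}\le\sum_{k\ge 0}\operatorname{cap}_p(N_k)^{-1/(p-1)}$; on each $N_k$ the chain-of-balls argument of \cite{Ho}, fed by \textbf{VD}, \textbf{WNP} and \textbf{VC}, bounds $\operatorname{cap}_p(N_k)$ below in terms of $V(\Omega\cap B(o,2^{k+1}r))/(2^{k+1}r)^{p}$, and a short computation identifies the resulting series, up to constants, with $\int_{2r}^\infty\bigl(t/V(\Omega\cap B(o,t))\bigr)^{1/(p-1)}\,dt$. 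The factor $(p-1)^{-2}$ is the (far from sharp) price of this bookkeeping, absorbing the powers of $p-1$ produced by the flux normalization of $G$ and by the Poincar\'e-based capacity estimate.

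The exhaustion and comparison arguments are routine and go exactly as in \cite{LT,Ho}; the delicate point --- and the main obstacle --- is the capacity lower bound for the shells $N_k$, where one must run the \textbf{WNP}--\textbf{VD} estimate with the \emph{uniform} Poincar\'e constant $C_2'$ and keep every resulting power of $p-1$ under control, so that the final bound is dominated by $(p-1)^{-2}$. One must also check that the a priori capacity bound genuinely forces the monotone limit $G$ to be finite rather than leaving $\Omega$ $p$-parabolic; this is precisely the content of hypothesis \eqref{pnp}.
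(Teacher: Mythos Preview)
Your approach is essentially the same as the paper's: the paper does not give a self-contained proof but simply says to trace the proof of Proposition~5.7 in \cite{Ho}, noting that uniform homogeneity furnishes a Harnack constant bounded as $p\to 1$. Your outline of that tracing (exhaustion, comparison, capacity estimate via dyadic shells and the chain-of-balls Poincar\'e argument) is a faithful expansion of what the paper leaves implicit.

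The one point on which the paper is sharper than your write-up is the provenance of the factor $(p-1)^{-2}$. You attribute it loosely to ``the flux normalization of $G$ and the Poincar\'e-based capacity estimate,'' which is vague enough to be unverifiable. The paper pins it down: it comes specifically from applying Young's inequality in Lemma~5.6 of \cite{Ho}. If you want your tracing to be complete, you should locate that step and check that the exponent is indeed $2$ (rather than merely some bounded power), since the precise exponent matters for the $C^0$ estimate in the proof of Theorem~\ref{exist}, where one takes $(p-1)\log$ of this bound.
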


The factor $\frac{1}{(p-1)^2}$ comes from applying Young's
inequality in Lemma 5.6 of \cite{Ho}. In \cite{LT} and \cite{Ho}, it can be seen
that, by solving the Dirichlet problem on a compact exhaustion, one can obtain a positive
$p$-harmonic  function $v$ with $v=1$ on $\partial \Omega$ and
$\lim_{x\to \infty} v(x)=0$, provided the end is
$p$-nonparabolic. It is easy to see from the proof of \cite{Ho} that
the above estimate (\ref{g-est}) also holds for such $v$. This is
what is needed for our purposes. Alternatively, one can obtain such an estimate from
(\ref{g-est}), using the Green's function as an upper barrier and applying the comparison principle.

Combining the gradient estimates of the previous sections and the
above result we have the following existence theorem.

\begin{theorem}\label{exist} Let $M$ be a complete Riemannian manifold. Assume
that $\Omega$ is a uniformly homogeneous end and is
$p_0$-nonparabolic for some $p_0>1$. Additionally, assume that
$\lim_{r\to \infty}\mathcal{V}(r)=0$ where
$$\mathcal{V}(r)=\sup_{2r\le t< \infty}\frac{t}{V(\Omega \cap B(o,
t ))}.
$$
Then there exists a proper solution $u$  to (\ref{eq-u}) on $\Omega$
with $u=0$ on $\partial \Omega$.
\end{theorem}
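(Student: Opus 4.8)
The plan is to follow the strategy of \cite{Moser-j}, producing the proper weak solution $u$ to the $1/H$-flow as a limit of solutions $u^{(p)}$ to \eqref{eq-u} as $p\searrow 1$, with the key uniform estimates supplied by Theorem \ref{main1} and Proposition \ref{prop:boundaryball}. First I would fix $p_0>1$ as in the hypothesis and work with $1<p\le p_0$. For each such $p$, I would solve the Dirichlet problem for the $p$-harmonic equation \eqref{eq:pharmonic} on a compact exhaustion of $\Omega$ with boundary values $1$ on $\partial\Omega$ and $0$ on the outer boundaries, obtaining (via \cite{Ho, LT}) a positive $p$-harmonic function $v^{(p)}$ on $\Omega$ with $v^{(p)}=1$ on $\partial\Omega$ and $v^{(p)}(x)\to 0$ as $x\to\infty$; set $u^{(p)}=-(p-1)\log v^{(p)}$, which then solves \eqref{eq-u} with $u^{(p)}=0$ on $\partial\Omega$ and $u^{(p)}\to+\infty$ at infinity (since $\Omega$ is $p_0$-nonparabolic, hence $p$-nonparabolic for $p\le p_0$).

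Next I would assemble the uniform estimates. The interior gradient estimate \eqref{local-gre} of Theorem \ref{main1} gives a bound on $|\nabla u^{(p)}|$ on compact subsets of $\Omega$ that stays finite as $p\to 1$ (all constants $b_{p,n}$, $c_{p,n}$, $C(n,K,p,R,\epsilon)$ are bounded there); combined with the boundary estimate of Proposition \ref{prop:boundaryball} this yields a locally uniform $C^{0,1}$ bound on the $u^{(p)}$ independent of $p$. (One also needs a locally uniform $C^0$ bound, e.g.\ $u^{(p)}\le C$ on a fixed precompact set, as invoked in the proof of Proposition \ref{prop:boundarymc}; this follows from the boundary gradient bound together with a Harnack argument, or by comparison with the barriers of that proof.) By Arzel\`a--Ascoli and a diagonal argument we extract a subsequence $u^{(p)}\to u$ in $C^0_{\operatorname{loc}}(\Omega)$ with $u\in C^{0,1}_{\operatorname{loc}}(\Omega)$, $u=0$ on $\partial\Omega$. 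The compactness-type argument of \cite{IH}, adapted in \cite{Moser-j} (p.\ 82), then shows that the limit $u$ satisfies the variational inequality \eqref{1/h-weak}, i.e.\ $u$ is a weak solution of \eqref{1/h}: one compares $J^p_{u^{(p)}}(u^{(p)};K)\le J^p_{u^{(p)}}(w;K)$ from \eqref{p-energy}, passes $p\to 1$ using the uniform gradient bounds and lower semicontinuity of the total-variation functional, and recovers $J_u(u;K)\le J_u(w;K)$.

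The remaining, and main, point is \emph{properness}: one must show $u(x)\to+\infty$ as $x\to\infty$, which requires an effective lower bound on $u^{(p)}$ at infinity that survives the limit $p\to 1$. Here I would use the Green's-function estimate \eqref{g-est}: since $v^{(p)}$ satisfies the same bound $v^{(p)}(x)\le \frac{C_4}{(p-1)^2}\int_{2r}^\infty(t/V(\Omega\cap B(o,t)))^{1/(p-1)}\,dt$ for $x\in\Omega(r)$, with $C_4$ independent of $p$, one obtains
$$
u^{(p)}(x)=-(p-1)\log v^{(p)}(x)\ge -(p-1)\log\!\left(\frac{C_4}{(p-1)^2}\int_{2r}^\infty\!\left(\frac{t}{V(\Omega\cap B(o,t))}\right)^{\frac{1}{p-1}}\!dt\right).
$$
The delicate estimate is to bound the integral: using $\mathcal{V}(r)=\sup_{t\ge 2r}\,t/V(\Omega\cap B(o,t))$ one has $\int_{2r}^\infty(t/V)^{1/(p-1)}dt\le \mathcal{V}(r)^{\frac{1}{p-1}-\frac{1}{p_0-1}}\int_{2r}^\infty(t/V)^{1/(p_0-1)}dt$, so that, writing $I_0(r)=\int_{2r}^\infty(t/V(\Omega\cap B(o,t)))^{1/(p_0-1)}dt\to 0$,
$$
u^{(p)}(x)\ge \log\frac{1}{\mathcal{V}(r)}-(p_0-1)\frac{\log\mathcal{V}(r)}{\ }\cdot 0 - (p-1)\log\frac{C_4 I_0(r)}{(p-1)^2},
$$
and as $p\to 1$ the last term tends to $0$ locally uniformly in $r$, leaving $u(x)\ge \log(1/\mathcal{V}(r))$ for $x\in\Omega(r)$; since $\mathcal{V}(r)\to 0$ this forces $u(x)\to+\infty$. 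I expect the careful bookkeeping in this last estimate — keeping the dependence on $p$ explicit and ensuring the $(p-1)^{-2}$ factor in \eqref{g-est} is absorbed harmlessly by the $-(p-1)\log(\cdot)$ prefactor — to be the main obstacle; everything else is a now-standard assembly of the interior estimate, the boundary estimate, and Moser's limiting argument.
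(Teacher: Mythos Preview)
Your proposal is correct and follows essentially the same route as the paper: construct $v^{(p)}$ via exhaustion, obtain uniform $C^{0,1}$ bounds from Theorem \ref{main1} together with the boundary estimates of Section~3, pass to the limit via the Moser/Huisken--Ilmanen compactness argument, and secure properness from the Green's-function bound \eqref{g-est} combined with the interpolation inequality $\int_{2r}^\infty(t/V)^{1/(p-1)}\,dt\le \mathcal{V}(r)^{\frac{1}{p-1}-\frac{1}{p_0-1}}\mathcal{A}_{p_0}(r)$, which in the limit $p\to 1$ yields exactly $u(x)\ge -\log\mathcal{V}(r(x))$. (Your displayed lower bound for $u^{(p)}$ is garbled in the middle term, but the intended computation --- $u^{(p)}(x)\ge 2(p-1)\log(p-1)-(p-1)\log C_4-(p-1)\log\mathcal{A}_{p_0}(r)-\frac{p_0-p}{p_0-1}\log\mathcal{V}(r)$ --- and its limiting conclusion match the paper's proof exactly.)
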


Before turning to the proof of the theorem, let us first comment on its assumptions.
The uniformly homogeneous assumption holds in particular for manifolds of
so-called asymptotically nonnegative sectional curvature.  These are manifolds
$M$ for which there exists a continuous non-increasing function $k(t):
[0, \infty)\to [0, \infty)$ such that $K_M \ge -k(t)$ and
$\int_0^\infty tk(t)\, dt <\infty$.  The reader is referred to \cite{LT}
and \cite{H} for the verification of the fact that on such manifolds
the first two assumptions of the theorem are satisfied. Notice that
in this case Theorem \ref{main1} implies that $|\nabla u|(x)\to 0$
as $x\to \infty$. Also notice that if
$$
V(\Omega \cap B(o, t ))\ge \delta t^{1+\epsilon}
$$
for some $\delta>0$ and $\epsilon>0$, then $\Omega$ is
$p$-nonparabolic for sufficient small $p$, and satisfies $\lim_{r\to
\infty}\mathcal{V}(r)=0$. In particular the above theorem can be
applied to the  asymptotically {\it locally} Euclidean (ALE)
manifolds, which include the cases considered in \cite{IH}. We refer
the reader to \cite{LT} and \cite{Ho} for other  {\it partially
homogenous} examples, including the manifolds with finite first Betti number
and nonnegative
Ricci curvature outside a compact set.

In a sense, the extra volume growth condition ($\lim_{r\to \infty}\mathcal{V}(r)=0$)  imposed in the theorem is
optimal. Let $(M, g)$ be Hamilton's
2-dimensional  `cigar' \cite{H}, i.e, $M=\R^2$ and $g=ds^2+\tanh^2
s d\theta^2$. Then $M$ has linear volume growth and nonnegative Ricci curvature, and is homogeneous,
however
$$
u\doteqdot\ln \left(\frac{\tanh s}{\tanh 1}\right).
$$
is a solution to \eqref{eq-u} on $\{u\ge 0\}\subset M$ but is not proper.
Moreover, we can show that in fact $M$ admits no proper solution.  For suppose that $u$ is such a solution.
Let $\varphi$ be a cut-off function. Integration by parts yields that
$$
\int_M \varphi |\nabla u|\, d\mu \le \int_M |\nabla \varphi|\, d\mu.
$$
Using the fact that $(M, g)$ is of linear volume growth, by choosing suitable $\varphi$ we can conclude that
$$
\int_M |\nabla u|\, d\mu \le C
$$
for some $C$ depending on the geometry of $M$.
On the other hand,
$$
\int_0^{2\pi} u(R, \theta) d\theta \to \infty$$
as $R\to \infty$, since $u$ is proper. In particular, we have that
$$
\int_1^{R}\int_0^{2\pi} u_s(s, \theta)\, d\theta, ds\to \infty.
$$
But, if we denote the point where $s=0$ by $o$,
it is easy to see that there exists $C_1$ such that
$$
\int_1^{R}\int_0^{2\pi} u_s(s, \theta)\, d\theta \le C_1\int_{B(o, R)\setminus B(o,1)} |\nabla u|\, d\mu \le C_1C,
$$
 which is a contradiction.
This shows that the volume condition $\lim_{r\to \infty}\mathcal{V}(r)\to 0$ is necessary for
the existence of a proper solution. Note that the example can be adapted to any dimension.

\begin{proof} (of Theorem \ref{exist}.)
We first construct $p$-harmonic functions $v^{(p)}$ for $p\le p_0$
with $v^{(p)}=1$ on $\partial \Omega$ and $v^{(p)}(x)\to 0$ as $x\to
\infty$. This can be done by solving the Dirichlet problem on a
compact exhaustion and then taking a limit. In view of the
regularity result of \cite{Lewis}, the $p$-harmonic function
$v^{(p)}$ will be $C^{1, \alpha}$. On the set where $|\nabla u|\ne
0$, elliptic regularity theory implies that both Theorem \ref{main1}
and the boundary estimate of Section 3 are valid and can be applied.
The existence of a non-trivial limit is then ensured by the estimate below. For
$p\le p_0$ it is easy to see that
$$
\left(\int_{2r}^\infty \left(\frac{t}{V(\Omega\cap B(o,
t))}\right)^{1/(p-1)}\, dt\right)^{p-1} \le
\mathcal{V}(r)^{\frac{p_0-p}{p_0-1}}\left(\mathcal{A}_{p_0}(r) \right)^{p-1}
$$
where
$$
\mathcal{A}_{p_0}(r)=\int_{2r}^\infty \left(\frac{t}{V(\Omega\cap B(o,
t))}\right)^{1/(p_0-1)}\, dt.
$$
 Hence $\Omega$ is also $p$-nonparabolic for $p\le p_0$. This
ensures that there exist $v^{(p)}$ satisfying
$$
v^{(p)}(x)\le
\frac{C_4}{(p-1)^2}\int_{2r(x)}^\infty\left(\frac{t}{V(\Omega\cap
B(o, t))}\right)^{1/(p-1)}\, dt.
$$
Combining the previous inequalities, we have that for $r(x)>>1$
\begin{eqnarray*}
u^{(p)}(x)&\ge& 2(p-1)\log (p-1) -(p-1)\log C_4-(p-1)\log
(\mathcal{A}_{p_0}(r(x))) \\
&\quad&-\frac{p_0-p}{p_0-1}\log (\mathcal{V}(r(x))).
\end{eqnarray*}
By the gradient estimates in the previous sections we can conclude
that $u^{(p)}$ converges locally uniformly to a limit function
$u(x)\in C_{\operatorname{loc}}^{0,1}(\Omega)$. Moreover, $u(x)$
satisfies that
$$
u(x)\ge-\log (\mathcal{V}(r(x)))
$$
for $r(x)>>1$.
 By the compactness argument from \cite{Moser-j}, Theorem \ref{main1}, and
Proposition \ref{prop:boundarymc}, we can conclude that $u$ is a
weak proper solution to (\ref{eq-u}).
\end{proof}

\section{The $p$-Laplacian heat equation}

Motivated by Theorem  \ref{main1}, we consider smooth solutions to
the parabolic analog of \eqref{eq-v}, namely
\begin{equation}\label{eq:p-par}
    \pd{v}{t} =  \operatorname{div}(|\nabla v|^{p-2}\nabla v)
\end{equation}
for $p > 1$.  This nonlinear evolution equation is the gradient flow for the $p$-energy functional
\[
    E_p(v) = \int_{M}|\nabla v|^p d\mu
\]
and has been studied rather extensively -- see, for example,
\cite{Ba},  \cite{V1}, and the references therein. For a given smooth solution $v$ of \eqref{eq:p-par}, it
will be useful for us to consider the linearization of the operator $\widehat{\Lambda}_p$ at $v$, given by
\begin{equation*}
    \widehat{\mathcal{L}}(\psi) = \operatorname{div}(h^{p/2-1}A(\nabla \psi))
%
\end{equation*}
Here $h = |\nabla v|^2$ and $A$ is the tensor introduced in Section 2, namely
$$
A_{ij}=\delta_{ij} +(p-2) \frac{v_i v_j}{h}.
$$

The main result in this section is essentially a consequence of the following
calculation.
\begin{lemma}\label{lem:heatopcomp}
Suppose $ v:M\times [0, T)\to \R$ is a smooth, positive solution to
\eqref{eq:p-par} with $p >1$. For any $\alpha
> 0$, define
\[
    F_{\alpha} := \frac{|\nabla  v|^p}{ v^2} - \alpha\frac{v_t}{ v}.
\]
Then, on the region $|\nabla v|>0$, the following estimate holds
\begin{align}
\label{eq:mainevol}
\begin{split}
    \left(\widehat{\mathcal{L}} - \pd{}{t}\right) F_{\alpha}
    &= ph^{p-2}\left(\left|\frac{v_{ij}}{v} - \frac{1}{p-1}\frac{v_i v_j}{ v^2}
                 \right|^2_A+\frac{R_{ij}v_iv_j}{v^2}\right)
   +(\alpha -1)(p-2)\frac{v_t^2}{v^2} \\
    &\phantom{=}\quad\quad+ (p-2)F_{1}^2
     -2(p-1)\frac{h^{p/2-1}}{v}\langle \nabla F_{\alpha}, \nabla  v\rangle.
\end{split}
\end{align}
In particular, if $\operatorname{Rc} \geq - K g$ for some $K > 0$, we have
\begin{align}
\begin{split}
\label{eq:mainevol2}
        \left(\widehat{\mathcal{L}} - \pd{}{t}\right) F_{\alpha} &\geq
        \left(\frac{p + n(p-2)}{n}\right)F_1^2
            +(\alpha -1)(p-2)\frac{v_t^2}{ v^2}\\
            &\qquad -pK\frac{h^{p-1}}{v^2}-2(p-1)\frac{h^{p/2-1}}{v}
            \langle \nabla F_{\alpha}, \nabla  v\rangle.
\end{split}
\end{align}  Here, for a two-tensor $T$, we write $|T|_A^2 := A^{ik}A^{jl}T_{ij}T_{kl}$.

\end{lemma}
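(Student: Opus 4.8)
The plan is to compute $(\widehat{\mathcal{L}}-\partial_t)F_\alpha$ directly by splitting it into the two pieces $(\widehat{\mathcal{L}}-\partial_t)(h^{p/2}/v^2)$ and $-\alpha(\widehat{\mathcal{L}}-\partial_t)(v_t/v)$, then reassembling. The key structural input, exactly as in the elliptic case, is that $\widehat{\mathcal{L}}$ is the linearization of $\widehat{\Lambda}_p$ at $v$, so that $\widehat{\mathcal{L}}(v_t) = \partial_t(\widehat{\Lambda}_p(v)) = \partial_t v_t = (v_t)_t$; in other words $(\widehat{\mathcal{L}}-\partial_t)(v_t)=0$. This is the parabolic analogue of equation \eqref{eq-u2} being used to kill $\Delta u$–type terms in Lemma \ref{bochner1}. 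From here, the quotient rule for the operator $\widehat{\mathcal{L}}-\partial_t$ applied to $v_t/v$ produces $-(v_t/v)\bigl[(\widehat{\mathcal{L}}-\partial_t)(1/v)\bigr]\cdot$(stuff) plus gradient cross terms plus a term from $(\widehat{\mathcal{L}}-\partial_t)(v)$ acting through the product rule; since $(\widehat{\mathcal{L}}-\partial_t)(v) = \widehat{\Lambda}_p(v)-v_t = 0$ as well, several terms drop and one is left with controlled quadratic expressions in $v_t$, $\nabla v$, and $\nabla F_\alpha$.

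First I would establish the two elementary product/quotient identities for $\mathcal{D}:=\widehat{\mathcal{L}}-\partial_t$: for functions $a,b$ with $b>0$,
\[
\mathcal{D}(a/b) = \frac{\mathcal{D}a}{b} - \frac{a}{b^2}\mathcal{D}b + \frac{2}{b^3}h^{p/2-1}A(\nabla b,\nabla b)\,a - \frac{2}{b^2}h^{p/2-1}A(\nabla a,\nabla b) + (\text{$t$-derivative terms from }\partial_t),
\]
being careful that $\widehat{\mathcal{L}}$ is a divergence-form operator with the fixed coefficient tensor $h^{p/2-1}A$, so the usual Leibniz computation applies with $h^{p/2-1}A$ playing the role of the metric. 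Applying this with $(a,b)=(h^{p/2},v^2)$ and separately $(a,b)=(v_t,v)$, and using $\mathcal{D}v=0$, $\mathcal{D}v_t=0$, reduces the whole computation to knowing $\mathcal{D}(h^{p/2})$. That last quantity is precisely the content of the Bochner-type identity behind Lemma \ref{bochner1}, adapted to the parabolic setting: one differentiates the evolution equation, uses the Bochner formula $\Delta h = 2|\nabla^2 v|^2 + 2\langle\nabla\Delta v,\nabla v\rangle + 2R_{ij}v_iv_j$, and commutes derivatives past $\partial_t$. This yields the $ph^{p-2}\bigl(|\cdots|_A^2 + R_{ij}v_iv_j/v^2\bigr)$ Hessian term together with the $(p-2)F_1^2$ term after completing the square with the combination $\frac{v_{ij}}{v}-\frac{1}{p-1}\frac{v_iv_j}{v^2}$. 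Collecting the leftover lower-order pieces gives the $(\alpha-1)(p-2)v_t^2/v^2$ term and the gradient cross term $-2(p-1)\frac{h^{p/2-1}}{v}\langle\nabla F_\alpha,\nabla v\rangle$.

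Once \eqref{eq:mainevol} is in hand, \eqref{eq:mainevol2} is a short consequence: discard the nonnegative Hessian part of $|\cdot|_A^2$ by keeping only its contribution to a trace term (as in the passage from \eqref{help1}–\eqref{help2} in Section 2, where Cauchy–Schwarz on the $(n{-}1)\times(n{-}1)$ block and on the mixed entries bounds $|\nabla^2 v|_A^2$ below by a multiple of a square of a trace), use $\widehat{\Lambda}_p(v)=v_t$ to recognize that trace as proportional to $F_1$, yielding the coefficient $\frac{p+n(p-2)}{n}$, and replace $R_{ij}v_iv_j \geq -Kh$ to produce the $-pK h^{p-1}/v^2$ term. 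The main obstacle I anticipate is purely bookkeeping: tracking the precise constants through the square-completion with the non-Euclidean inner product $|\cdot|_A^2$ (the tensor $A$ has eigenvalue $p-1$ in the $\nabla v$ direction and $1$ transversally, so the weights in the quadratic form must be handled carefully), and making sure the $(p-2)$-dependent terms coming from $\mathcal{D}(h^{p/2})$, from the $A(\nabla b,\nabla b)$ quotient terms, and from the $v_t^2$ collection combine into exactly $(p-2)F_1^2 + (\alpha-1)(p-2)v_t^2/v^2$ with no residue. The sign structure, however, is dictated by the elliptic computation of Section 2, so no genuinely new idea is needed beyond carrying the extra $\partial_t$ through.
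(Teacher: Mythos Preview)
Your overall strategy is the same as the paper's: split $F_\alpha$ into $h^{p/2}/v^2$ and $-\alpha v_t/v$, use $(\widehat{\mathcal{L}}-\partial_t)v_t=0$, apply a quotient rule for $\widehat{\mathcal{L}}$, compute $(\widehat{\mathcal{L}}-\partial_t)h^{p/2}$ via a Bochner identity, complete the square to the $A$-norm of $\frac{v_{ij}}{v}-\frac{1}{p-1}\frac{v_iv_j}{v^2}$, and then invoke $n|T|_A^2\ge(\tr_A T)^2$ together with $F_1=h^{p/2-1}\tr_A\bigl(\tfrac{1}{p-1}\tfrac{v_iv_j}{v^2}-\tfrac{v_{ij}}{v}\bigr)$ to obtain \eqref{eq:mainevol2}.

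There is, however, one concrete error that would derail your computation. You assert $(\widehat{\mathcal{L}}-\partial_t)v=\widehat{\Lambda}_p(v)-v_t=0$, and then say ``several terms drop.'' But $\widehat{\mathcal{L}}(v)\ne\widehat{\Lambda}_p(v)$. Since $A(\nabla v)=(p-1)\nabla v$, one has $\widehat{\mathcal{L}}(v)=\operatorname{div}\bigl(h^{p/2-1}A(\nabla v)\bigr)=(p-1)\operatorname{div}(|\nabla v|^{p-2}\nabla v)=(p-1)v_t$; equivalently, $\widehat{\Lambda}_p$ is positively homogeneous of degree $p-1$, so Euler's relation gives $D\widehat{\Lambda}_p[v](v)=(p-1)\widehat{\Lambda}_p(v)$. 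Hence
\[
(\widehat{\mathcal{L}}-\partial_t)v=(p-2)\,v_t,
\]
which is exactly the identity the paper uses. These terms do \emph{not} drop: in the quotient rule applied to $v_t/v$ the piece $-\tfrac{v_t}{v^2}(\widehat{\mathcal{L}}-\partial_t)v$ produces $-(p-2)v_t^2/v^2$, and applied to $h^{p/2}/v^2$ it produces $-2(p-2)h^{p/2}v_t/v^3$. Both contributions are essential---the first is the sole source of the $(\alpha-1)(p-2)v_t^2/v^2$ term in \eqref{eq:mainevol}, and the second combines with the gradient cross terms and the completed square to yield the stated $(p-2)F_1^2$. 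With this correction in place, your outline agrees with the paper's proof.
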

\begin{proof} First we have the following formula resembling Lemma
\ref{bochner1}.
\begin{lemma}\label{bochner2} Let $h=|\nabla v|^2$. Then
\begin{equation}
\left(\frac{\partial}{\partial t}-\widehat{\mathcal{L}}\right)h
=-2h^{p/2-1}(|\nabla\nabla v|^2+R_{ij}v_iv_j)-(\frac{p}{2}-1)h^{p/2-2}|\nabla
h|^2.
\end{equation}
\end{lemma}
The proof of the above identity is a straightforward calculation and very similar to
that of Lemma \ref{bochner1}.  Hence we leave the details to
interested readers.

By the definition of $\widehat{\mathcal{L}}$, we have that
    \begin{equation}\label{eq:pdtevol}
        \left(\widehat{\mathcal{L}} - \pd{}{t}\right)v_t = 0,
\end{equation}
and with the help of Lemma \ref{bochner2}, we find that
    \begin{equation}\label{eq:normpevol}
        \left(\widehat{\mathcal{L}} -\pd{}{t}\right)h^{p/2} =
        ph^{p-2}\left[|\nabla\nabla  v|^2_A +
                            R_{ij}v_i v_j\right].
    \end{equation}
Using the general formula
    \begin{align*}
    \begin{split}
            \widehat{\mathcal{L}}\left(\frac{Q}{ v^{\gamma}}\right) &=
                \frac{1}{ v^{\gamma}}\widehat{\mathcal{L}}(Q) -
                \gamma \frac{Q}{ v^{\gamma+1}}\widehat{\mathcal{L}}( v)\\
                &\qquad- \frac{2\gamma(p-1)}{ v^{\gamma +1}}h^{p/2-1}
                \langle\nabla Q, \nabla  v\rangle
                    +\gamma(\gamma + 1)(p-1)h^{p/2}\frac{Q}{ v^{\gamma + 2}},
    \end{split}
    \end{align*}
    and the identity
    \[
            \left(\widehat{\mathcal{L}} -\pd{}{t}\right) v = (p-2)v_t
    \]
    together with \eqref{eq:normpevol} and \eqref{eq:pdtevol}, we then obtain
    \begin{align}
        \begin{split}\label{eq:normpphievol}
            \left(\widehat{\mathcal{L}} -\pd{}{t}\right)\frac{h^{p/2}}{ v^2}&=
                \frac{p}{ v^2}h^{p-2}\left[|\nabla\nabla v|^2_A +
                R_{ij}v_iv_j)\right]
                -2(p-2)h^{p/2}\frac{v_t}{v^3}\\
                &\qquad -4p(p-1)\frac{h^{p-2}}{v^3}v_{ij}v_iv_j
                + 6(p-1)\frac{h^{p}}{v^4}
        \end{split}
    \end{align}
    and
    \begin{align}
    \begin{split}\label{eq:pdtphievol}
        \left(\widehat{\mathcal{L}} -\pd{}{t}\right)\frac{v_t}{v} &=
        -(p-2)\frac{v_t^2}{v^2}
            -2(p-1)\frac{h^{p/2-1}}{v^2}\left\langle\nabla v_t, \nabla  v\right\rangle \\
            &\quad\quad+ 2(p-1)\frac{h^{p/2}}{v^3}v_t.
    \end{split}
    \end{align}
Now,
    \begin{align*}
    \begin{split}
        \nabla_k F_{\alpha} = ph^{p/2-1}\frac{v_{ik}v_i}{v^2}
        - 2\frac{v_k}{ v^3}
        -\alpha\frac{\nabla_kv_t}{v} + \alpha\frac{v_t}{v^2} v_k
    \end{split}
    \end{align*}
    so, after multiplying both sides by $\alpha$, \eqref{eq:pdtphievol} becomes
    \begin{align*}
    \begin{split}
            \left(\widehat{\mathcal{L}} -\pd{}{t}\right)\left(\alpha \frac{v_t}{v}\right)
            &=-\alpha(p-2)\left(\frac{v_t}{v}\right)^2
            - 2p(p-1)\frac{v_{ij}v_iv_j}{v^3}\\
            &\quad\quad +4(p-1)\frac{h^p}{ v^4}
            +2(p-1)\frac{h^{p/2-1}}{v} \langle \nabla F_{\alpha}, \nabla  v\rangle.
    \end{split}
    \end{align*}
    Combining this with \eqref{eq:normpphievol}, we have
    \begin{align}\label{eq:fevol}
    \begin{split}
        \left(\widehat{\mathcal{L}} -\pd{}{t}\right)F_{\alpha} &=
            p\frac{h^{p-2}}{v^2}\left[|\nabla\nabla v|^2_A + R_{ij}v_iv_j\right]
                -2(p-2)\frac{h^{p/2}}{v^3}v_t\\
                &\quad\quad-2p(p-1)\frac{h^{p-2}}{v^3}v_{ij}v_iv_j
                 + 2(p-1)\frac{h^p}{v^4}\\
                &\quad\quad\quad\quad+\alpha(p-2)\frac{v_t^2}{ v^2}
                - 2(p-1)\frac{h^{p/2-1}}{v}\langle \nabla F_{\alpha}, \nabla v\rangle.
    \end{split}
    \end{align}
    Equation \eqref{eq:mainevol} then follows from \eqref{eq:fevol} and the identity
    \begin{equation*}
       ph^{p-2} \left|\frac{v_{ij}}{v}-\frac{1}{p-1}\frac{v_iv_j}{v^2}\right|^2_A
        = p\frac{h^{p-2}}{v^2} |\nabla\nabla v|_A^2
    -2p(p-1)\frac{h^{p-2}}{v^3}v_{ij}{v_iv_j} + p \frac{h^p}{v^4}.
    \end{equation*}
    For \eqref{eq:mainevol2}, we observe that
   $$
        F_{1} = h^{p/2-1}
        \tr_A\left(\frac{1}{(p-1)}\frac{v_i v_j }{ v^2} - \frac{v_{ij}}{
        v}\right)$$
where $$ \tr_A\left(\frac{1}{(p-1)}\frac{v_i v_j}{ v^2}
- \frac{v_{ij}}{
        v}\right)= A_{ij}\left(\frac{1}{(p-1)}\frac{v_i v_j}{ v^2}
- \frac{v_{ij}}{ v}\right),
$$
    so that
    \[
        ph^{p-2}\left|\frac{1}{(p-1)}\frac{v_iv_j}{ v^2}
        - \frac{v_{ij}}{ v}\right|^2_A\geq
        \frac{p}{n}F_1^2
    \]
    by the standard inequality $n|T|^2_A \geq (\tr_{A}T)^2$ for a two-tensor $T$.
\end{proof}

    In the case $\alpha = 1$, it is convenient to consider the expression of the above equations in terms of the ``pressure'' quantity
\begin{equation}\label{eq:pressure}
    \phi \doteqdot \frac{p-1}{p-2}v^{\frac{p-2}{p-1}}.
\end{equation}
Then
\[
    \nabla\nabla \phi = v^{\frac{p-2}{p-1}}\left(\frac{\nabla\nabla v}{v} - \frac{1}{p-1}\frac{\nabla v\otimes\nabla v}{v^2}\right)
\]
and
\[
    \widehat{\Lambda}_p(\phi) = |\nabla\phi|^{p-2}\tr_A(\nabla\nabla \phi) = -F_1
\]
so that in the case $\alpha =1$, the identity \eqref{eq:mainevol} has the equivalent form
\begin{align}
\label{eq:mainevolp}
\begin{split}
    \left(\pdt - \widehat{\mathcal{L}}\right)\widehat{\Lambda}_p(\phi)
        &\geq p|\nabla \phi|^{2(p-2)}\left(|\nabla\nabla\phi|_{A}^2 +
R_{ij}\phi_i\phi_j\right)
                    + (p-2)\widehat{\Lambda}_p(\phi)^2\\
        &\qquad -2(p - 1) \left\langle \nabla\widehat{\Lambda}_p(\phi), \nabla \phi\right\rangle.
\end{split}
\end{align}

When $\operatorname{Rc}\geq 0$, \eqref{eq:mainevol2} suggests,
by way of the maximum principle, the global estimate
\begin{equation}\label{eq:globalp}
    \widehat{\Lambda}_p(\phi) = \frac{v_t}{v} - \frac{|\nabla v|^p}{v^2} \geq -\frac{n \beta}{t}
\end{equation}
for $\beta = 1 /(p + n(p-2))$ and $p > 2n/(n+1)$.  Such an estimate, analogous to the differential Harnack estimate in \cite{LY} for the heat equation,
 would indeed be sharp in view of the explicit source-type solutions (see, e.g., \cite{Ba})
 \[
        H_p(x,t) = \frac{1}{t^{n\beta}}\left(1 + \frac{\beta^{\frac{1}{p-1}}(2-p)}{p}
            \left(\frac{|x|}{t^{\beta}}\right)^{\frac{p}{p-1}}\right)_{+}^{\frac{p-1}{p-2}}
 \]
to equation \eqref{eq:p-par} on $\mathbb{R}^n$ for which \eqref{eq:globalp} is an equality.  However, \eqref{eq:p-par} is degenerate where $|\nabla v| = 0$, and our calculations, carried out
in the region $|\nabla v|> 0$, are thus as yet insufficient to draw such a conclusion.  Following \cite{EV2}, we therefore consider a family of strictly parabolic equations which approximate
\eqref{eq:p-par} and by proving analogous estimates for the corresponding quantities of the solutions to the approximate equations, we may obtain an estimate of the above form via a limiting
procedure.  The precise statement of the result, which was obtained in \cite{EV2} in the case $M=\mathbb{R}^n$ (albeit for a broader class of solutions), is the following.
We restrict our attention to smooth solutions.
\begin{theorem}
    Suppose $(M^n, g)$ is a complete Riemannian manifold with nonnegative Ricci curvature, and suppose
    $v$ is a smooth, nonnegative solution to \eqref{eq:p-par} with $p > 2n/(n+1)$ for $t\in[0, \Omega)$.
    Then, for all  $t\in (0,\Omega)$, one has
    \begin{equation}
    \label{eq:par-global}
        \frac{|\nabla  v|^p}{ v^2} - \frac{1}{ v}\pd{ v}{t} \leq \frac{n\bar{\beta}}{t}.
    \end{equation}
    where
\[
    \bar{\beta} = \begin{cases} \beta\quad\mbox{if}\quad \frac{2n}{n+1} < p < 2\\
                   (p-1)\beta\quad{if}\quad{p > 2}
    \end{cases}
\]
\end{theorem}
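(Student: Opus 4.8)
The plan is to establish the estimate \eqref{eq:par-global} by the approximation scheme indicated in the discussion preceding the theorem, reducing the degenerate equation \eqref{eq:p-par} to a family of uniformly parabolic equations to which the maximum principle applies cleanly. Concretely, for $\delta > 0$ I would consider solutions $v_\delta$ to the regularized equation $\partial_t v_\delta = \operatorname{div}((|\nabla v_\delta|^2 + \delta)^{(p-2)/2}\nabla v_\delta)$ with the same (or suitably mollified) initial data, and positive lower bound $v_\delta \geq \delta$ to keep the ``pressure'' quantities well-defined; standard parabolic theory gives smooth solutions on $[0,\Omega)$, and $v_\delta \to v$ locally uniformly as $\delta \to 0$. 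For such $v_\delta$ the computations of Lemma \ref{lem:heatopcomp} go through with $h$ replaced by $h_\delta := |\nabla v_\delta|^2 + \delta$ and $A$ by the corresponding non-degenerate tensor; in particular the analogue of \eqref{eq:mainevol2} with $\alpha = 1$ and $\operatorname{Rc} \geq 0$ holds, and since $F_1 = \widehat{\Lambda}_p(\phi)$ with $\phi$ as in \eqref{eq:pressure}, one has a differential inequality of the schematic form $(\widehat{\mathcal L} - \partial_t) F_1 \geq \frac{p + n(p-2)}{n} F_1^2 + (p-2)F_1^2 - (\text{transport term})$.

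Next I would run the standard Li--Yau/Hamilton trick: set $P := t F_1$ and show that $P$ cannot be too positive. The coefficient of $F_1^2$ on the right is $\frac{p+n(p-2)}{n} + (p-2) = \frac{p + n(p-2) + n(p-2)}{n}$ for $p \geq 2$ and just $\frac{p+n(p-2)}{n}$ for $p < 2$ (keeping the $(p-2)F_1^2$ term only when it has the favorable sign, i.e.\ $p \geq 2$); this is exactly what produces the two cases in the definition of $\bar\beta$, with $\bar\beta = \beta = 1/(p+n(p-2))$ when $2n/(n+1) < p < 2$ and $\bar\beta = (p-1)\beta$ when $p > 2$. Applying $(\widehat{\mathcal L} - \partial_t)$ to $P$, using $\partial_t P = F_1 + t\,\partial_t F_1$, and evaluating at a first interior maximum point of $P$ on $M \times (0, t_0]$ (after multiplying by a standard spatial cutoff $\eta$ supported in a large ball, exactly as in Section \ref{sec:mainproof}, to handle non-compact $M$ and then letting the ball exhaust $M$), one obtains at that point $0 \leq F_1 + (\text{favorable quadratic in } F_1)$, which forces $t_0 F_1 \leq n\bar\beta$, i.e.\ $F_1 \leq n\bar\beta/t$. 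Rewriting $F_1 = \frac{|\nabla v_\delta|_\delta^p}{v_\delta^2} - \frac{1}{v_\delta}\partial_t v_\delta$ (with the $\delta$-modified norm) and letting $\delta \to 0$ yields \eqref{eq:par-global} for $v$ on the set where things are smooth; a density argument upgrades it to all smooth nonnegative solutions as stated.

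The main obstacle is making the maximum-principle argument rigorous on a non-compact manifold without any decay assumptions on $v$: the quantity $P = tF_1$ need not attain its supremum, so one must introduce the cutoff $\eta(x) = \theta(r(x)/R)$ and carefully bound the extra terms $\langle \nabla P, \nabla \eta\rangle$, $\widehat{\mathcal L}(\eta)$, and the transport term $-2(p-1)v^{-1}h^{p/2-1}\langle \nabla F_1, \nabla v\rangle$ that arise — this requires Laplacian comparison (only $\operatorname{Rc} \geq 0$ is available, which suffices) and absorbing errors of size $O(1/R^2)$ and $O(1/R)$ that vanish as $R \to \infty$, while being attentive to the degeneracy as $\delta \to 0$ so that no constant blows up. A secondary technical point is verifying that the regularized solutions $v_\delta$ remain smooth and positive on the full time interval and converge with enough regularity ($C^2_{\mathrm{loc}}$ in space, $C^1$ in time on the good set) to pass the inequality to the limit; here one can cite the standard quasilinear parabolic regularity theory and the local gradient estimates already developed, or mollify the initial data and use interior Schauder estimates. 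Everything else is a routine, if lengthy, computation patterned on Lemma \ref{lem:heatopcomp} and the classical Li--Yau method.
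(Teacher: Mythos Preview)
There are real gaps here. First, you double-count the $(p-2)F_1^2$ term: in \eqref{eq:mainevol} with $\alpha=1$ the right side is $ph^{p-2}|\cdots|_A^2 + (p-2)F_1^2 - (\text{transport})$, and applying $ph^{p-2}|\cdots|_A^2 \geq \tfrac{p}{n}F_1^2$ already produces the single coefficient $\tfrac{p}{n}+(p-2) = \tfrac{p+n(p-2)}{n}$, which is exactly \eqref{eq:mainevol2}. There is no further $(p-2)F_1^2$ to add, and in any case your coefficient $\tfrac{p+n(p-2)}{n}+(p-2)=\tfrac{p+2n(p-2)}{n}$ does not equal $1/(n\bar\beta)=\tfrac{p+n(p-2)}{n(p-1)}$ for $p>2$.

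More seriously, you have misidentified the origin of $\bar\beta$. The formal inequality \eqref{eq:mainevol2} with $\alpha=1$, $K=0$ gives the \emph{sharp} bound $F_1 \leq n\beta/t$ for every $p>2n/(n+1)$; the weaker constant $\bar\beta=(p-1)\beta$ for $p>2$ is entirely an artifact of the regularization, not of selectively keeping or dropping terms. In the paper's proof (following Esteban--V\'azquez) one approximates the \emph{pressure} equation by \eqref{eq:p-parapprox} with the specific cut-offs $\zeta_\e,\varphi_\e,\psi_\e$, and the analogue of the key inequality carries an extra prefactor $\zeta_\e/(p-1)$ in front of $F_{1,\e}^2$ (see \eqref{eq:zetaconst} and \eqref{eq:midcase}); since $\zeta_\e$ interpolates between $1$ and $p-1$, this factor can drop to $1/\max\{1,p-1\}$, and \emph{that} is what produces $\bar\beta$. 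Your simpler regularization $h\mapsto h_\delta=|\nabla v|^2+\delta$ does not make Lemma~\ref{lem:heatopcomp} ``go through verbatim'': the crucial identity $F_1 = h^{p/2-1}\operatorname{tr}_A\bigl(\tfrac{1}{p-1}\tfrac{\nabla v\otimes\nabla v}{v^2}-\tfrac{\nabla\nabla v}{v}\bigr)$ fails, because $A_\delta^{ij}v_iv_j = h\bigl(1+(p-2)h/h_\delta\bigr)\neq (p-1)h_\delta$, so the passage from $|\cdots|_{A_\delta}^2 \geq \tfrac{1}{n}(\operatorname{tr}_{A_\delta}\cdots)^2$ to $ph_\delta^{p-2}|\cdots|_{A_\delta}^2 \geq \tfrac{p}{n}F_{1,\delta}^2$ acquires $O(\delta)$ error terms you have not addressed. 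Handling these is precisely why the more elaborate pressure-equation regularization is used.
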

\begin{remark}
    In the case $M = \mathbb{R}$, the above result -- with the sharp constant $\beta = 1/2(p-1)$ -- was obtained in \cite{EV1} for all $p > 1$.
\end{remark}
\begin{proof}

The argument follows almost exactly as in that of Section 2 of \cite{EV2},
where the equation satisfied by the pressure $\phi = (p-1)/(p-2)v^{(p-2)/(p-1)}$, namely,
\[
        \pd{\phi}{t} = \frac{p-2}{p-1}\phi \operatorname{div}(|\nabla \phi|^{p-2}\nabla \phi) + |\nabla\phi|^p,
\]
is approximated by the equation
\begin{equation}\label{eq:p-parapprox}
        \pd{\phi_{\e}}{t} = \frac{p-2}{p-1}\phi_{\e} \operatorname{div}(\varphi_{\e}(|\nabla \phi_{\e}|)\nabla
\phi_{\e}) + \psi_{\e}(|\nabla \phi_{\e}|)
\end{equation}
for judicious choices of $\varphi_{\e}$ and $\psi_{\e}$ depending on $\epsilon > 0$. For completeness, we outline the argument below.

First, take $\zeta_{\e}\in C^{\infty}([0,\infty))$ to satisfy
\[
    \zeta_{\e}(r) = \begin{cases} p-1 & \mbox{if}\quad r \geq \e\\
                               1 &\mbox{if}\quad r \in [0, a_{\e}]
               \end{cases}
\quad\quad
\mbox{where}\quad\quad
      a_{\e} = \begin{cases}
                     \e(p-1)^{1/\e}/2 &\mbox{if}\quad p < 2\\
                     \e(p-1)^{-1/\e}/2 &\mbox{if}\quad p > 2.
               \end{cases}
\]
On the interval $(a_{\e}, r)$, choose $\zeta_{\e}$ to be non-decreasing if $p < 2$,
non-increasing if
$p > 2$, and to satisfy $|r\zeta_{\e}^{\prime}(r)| \leq \e \zeta_{\e}(r)$.  Then let $\varphi_{\e}$
satisfy
\[
     r\frac{\varphi_{\e}^{\prime}(r)}{\varphi_{\e}(r)} = \zeta_{\e}(r) - 1,
\]
so that $\varphi_{\e}(r)$ is constant for small $r$ and equal to $r^{p-2}$ for $r \geq \e$.
Finally, define
\[
     \psi_{\e}(r) = \frac{p}{p-1}\left(r^2\varphi_{\e}(r) - \int_0^r s \varphi_{\e}(s)\,ds\right).
\]
With these choices, \eqref{eq:p-parapprox} is strictly parabolic and we obtain a smooth
solution $\phi_{\e}$ for each $\epsilon > 0$.

Next, we derive an equation analogous to \eqref{eq:mainevolp} for
$F_{1,\e}:= -\operatorname{div}(\varphi_{\e}(|\nabla \phi_{\e}|)\nabla\phi_{\e})$.
Define
\[
  B_{\e}^{ij} = g^{ij} + \frac{\varphi_{\e}^{\prime}}{\rho\varphi_{\e}}\phi_{\e}^{i}\phi_{\e}^{j}
\quad\quad\mbox{and}\quad\quad
\tilde{B}_{\e}^{ij} = g^{ij} - \frac{\phi_{\e}^i\phi_{\e}^j}{\rho^2},
\]
where $\rho = |\nabla \phi_{\e}|$.  Introducing the operator
\[
        \widehat{L}_{\e}(\Phi) \doteqdot
\operatorname{div}\left(\frac{p-2}{p-1}\phi_{\e}\varphi_{\e}
        B_{\e}\nabla\Phi\right),
\]
we compute
\begin{align}
\label{eq:F1approx}
\begin{split}
    \left(\widehat{\mathcal{L}}_{\e} -\pdt\right)F_{1,\e} &=
        2\zeta_{\e}\varphi_{\e}\left\langle \nabla F_{1, \e}, \nabla \phi_{\e}
        \right\rangle +
        \frac{p-2}{p-1}\zeta_{\e}F_{1,\e}^2\\
        &\quad
        +\frac{p}{p-1}\zeta_{\e}\varphi_{\e}^2
        \left[|\nabla\nabla\phi_{\e}|^2_{\tilde{B}_{\e}}
                + \operatorname{Rc}(\nabla \phi_{\e},\nabla\phi_{\e})\right]\\
&\quad
+ \frac{p}{p-1}\zeta_{\e}\varphi_{\e}^2\left(\frac{\rho\zeta^{\prime}_{\e}}{\zeta_{\e}}
        +2\zeta_{\e}\right)(S- H^2)\\
&\quad +\zeta_{\e}^2\varphi_{\e}^2\left(\frac{p}{p-1}\zeta_{\e} + 2\frac{\rho\zeta^{\prime}_\e}{\zeta_{\e}}\right)H^2
+ \frac{p-2}{p-1}\zeta^{\prime}_{\e}\varphi_{\e}^2H(\Delta \phi_{\e} - H),
\end{split}
\end{align}
where
\[
        S \doteqdot \frac{1}{\rho^2}g^{ik}
        \nabla_i\nabla_j\phi_{\e} \nabla_k\nabla_l \phi_{\e}
        \nabla^{j}\phi_{\e}\nabla^{l}\phi_{\e}
\quad\quad\mbox{and}\quad\quad H\doteqdot \frac{1}{\rho^4}
        \left[\nabla\nabla \phi_{\e}(\nabla\phi_{\e}, \nabla \phi_{\e})\right]^2.
\]
Now, $F_{1,\e} = -\varphi_{\e}B_{\e}^{ij}\nabla_{i}\nabla_{j}\phi_{\e}$,
and when $\rho \in [0, a_{\e}]\cup  [\e, \infty)$,
the final five terms on the left side of \eqref{eq:F1approx} reduce to
\begin{align}\notag
        &\frac{p-2}{p-1}\zeta_{\e}F_{1,\e}^2 + \frac{p}{p-1}\zeta_{\e}\varphi_{\e}^2
\left[|\nabla\nabla \phi_{\e}|_{B_{\e}}^2 + \operatorname{Rc}(\nabla\phi_{\e},
        \nabla\phi_{\e})\right]\phantom{EXTRASPACE}\\
\notag
        &\qquad\qquad\qquad\qquad\qquad\geq \frac{\zeta_{\e}}{p-1}\left(p-2 + \frac{p}{n}\right)F_{1,\e}^2
        \notag\\
        &\qquad\qquad\qquad\qquad\qquad\geq \frac{1}{\max\{1, p-1\}n\beta } F_{1, \e}^2.\label{eq:zetaconst}
\end{align}

When $\rho \in (a_{\e}, \e)$, we need to do a little extra estimation.
First, note that $S \geq H^2$ (so that the fourth term in \eqref{eq:F1approx}
is non-negative for sufficiently small $\epsilon$ by the assumptions
on $\zeta_{\e}^{\prime}$).  Also, one has
\begin{align}
\notag F_{1,\e}^2 &\leq \frac{n}{n-1}
\varphi_{\e}^2\left(\Delta \phi_{\e} - H\right)^2 +n \zeta_{\e}^2\varphi_{\e}^2 H^2\\
\label{eq:btildeest}
        &\leq n\varphi_{\e}^2|\nabla\nabla\phi_{\e}|^2_{\tilde{B}_{\e}}
        +n \zeta_{\e}^2\varphi_{\e}^2 H^2,
\end{align}
and similarly
for any $\mu > 0$, that
\begin{equation}\label{eq:xtermest}
        H\left(\Delta\phi_{\e} - H\right) \geq
                -\frac{\mu}{2}H^2
                -\frac{n-1}{2\mu}|\nabla\nabla \phi_{\e}|^2_{\tilde{B}_{\e}}.
\end{equation}
With these inequalities, equation \eqref{eq:F1approx}, and the assumptions on
$\zeta^{\prime}_{\e}$, we can find  a constant $C$, independent
of $\epsilon$, such that
\begin{align}
\notag
 &\left(\widehat{\mathcal{L}}_{\e} - 2\zeta_{\e}\varphi_{\e}\nabla \phi_{\e}
\cdot \nabla -\pdt\right)
F_{1,\e}\phantom{SPACESPACESPACESPACE}\\
&\qquad\qquad\qquad\qquad \geq
        \frac{\zeta_{\e}}{p-1}\left[ (p-2) F_{1,\e}^2
        +(p- C\e)\varphi_{\e}^2
\left(|\nabla\nabla\phi_{\e}|^2_{\tilde{B}_{\e}} +\zeta_{\e}^2
        H^2\right)\right] \notag\\
\label{eq:midcase}
&\qquad\qquad\qquad\qquad\geq \frac{1}{\max\{1, p-1\}}\left(p -2 + \frac{p - C\e}{n}\right)
F_{1,\e}^2.
\end{align}
Since we have already noted that the stronger inequality \eqref{eq:zetaconst}
holds when $\rho\leq a_{\e}$ or $\rho \geq \e$, one may appeal to the maximum
principle to deduce in general that
\[
    F_{1,\e} \leq \frac{n\bar{\beta}}{(1 - C\e)t}.
\]
Letting $\e \to 0$, and making use of appropriate energy estimates, as in \cite{EV2}, we can obtain a limiting $\phi$, and consequently a solution $v$ to
\eqref{eq:p-par}, for which the inequality \eqref{eq:par-global} holds.
\end{proof}

\section{Another nonlinear parabolic equation}

Note that with the above parabolic estimate, one can not recover the elliptic
result proved in Section 2, thus in this section, we consider instead the
parabolic equation associated to the operator
$\Lambda_p$. Let $u$ be a solution to the equation
\begin{equation}\label{pv2-equ}
\frac{\partial u}{\partial t} -\Lambda_p(u)=\frac{\partial
u}{\partial t} -\operatorname{div}(|\nabla u|^{p-2}\nabla u)+|\nabla
u|^p=0.
\end{equation}
This nonlinear parabolic equation has also been studied in the
literature. See, for example, \cite{V1} and the references therein.

The corresponding equation for $v=\exp(-\frac{u}{p-1})$ is
\begin{equation}\label{pv2-eqv}
\frac{\partial v^{p-1}}{\partial t} =(p-1)^{p-1}\operatorname{div}
\left(|\nabla v|^{p-2}\nabla v\right).
\end{equation}

Recall from Section 2 the operator $\mathcal{L}$ defined as
$$
\mathcal{L}(\psi)=\operatorname{div}(f^{p/2-1}A(\nabla
\psi))-pf^{p/2-1}\langle \nabla u, \nabla \psi\rangle,
$$
which is the linearized operator of $\Lambda_p$. Note that
$$
A^{ij}=g^{ij}+(p-2)\frac{v^i v^j}{|\nabla
v|^2}=g^{ij}+(p-2)\frac{u^i u^j}{|\nabla u|^2}.
$$

For any $u$, letting $f = |\nabla u|^2$, the proof of Lemma \ref{bochner1} yields,
the nonlinear Bochner formula
\begin{equation}\label{bochner-real}
\mathcal{L} (f) =2f^{p/2-1}(u_{ij}^2+R_{ij}u_iu_j)+2\langle \nabla
u, \nabla (\Lambda_p(u))\rangle +(\frac{p}{2}-1)|\nabla f|^2
f^{p/2-2}.
\end{equation}
A bit of computation together with this formula yields the following lemma.
\begin{lemma} \label{bochner3} Let $f=|\nabla u|^2$. Then
\begin{eqnarray}\label{pv2-equt}
\left(\frac{\partial }{\partial t}-\mathcal{L}\right)u_t &=&0\\
\left(\frac{\partial }{\partial t}-\mathcal{L}\right) f&=&
-2f^{p/2-1}(u_{ij}^2+R_{ij}u_iu_j)-(\frac{p}{2}-1)f^{p/2-2}|\nabla
f|^2. \label{pv2-eqf}
\end{eqnarray}
\end{lemma}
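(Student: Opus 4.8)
The plan is to obtain both identities as immediate consequences of the nonlinear Bochner formula \eqref{bochner-real}, combined with the fact -- recalled both in Section \ref{sec:mainproof} and at the start of this section -- that $\mathcal{L}$ is exactly the linearization of the operator $\Lambda_p$ at $u$. All computations are carried out on the open set where $f = |\nabla u|^2 > 0$, on which the coefficients of $\mathcal{L}$ and the fractional powers of $f$ are smooth; and since the background metric does not depend on $t$, the operator $\partial_t$ commutes with $\nabla$ acting on functions.

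For \eqref{pv2-equt}, note that a solution $u$ of \eqref{pv2-equ} satisfies $u_t = \Lambda_p(u)$. Differentiating this identity in $t$ and applying the chain rule along the path $t\mapsto u(\cdot,t)$ gives $\partial_t u_t = \partial_t\bigl(\Lambda_p(u)\bigr) = D\Lambda_p\bigl|_u(\partial_t u) = \mathcal{L}(u_t)$, where the last equality is the statement $D\Lambda_p|_u = \mathcal{L}$. This last point is the short computation $\frac{d}{ds}\big|_{s=0}|\nabla(u+s\psi)|^{p-2}\nabla(u+s\psi) = f^{p/2-1}A(\nabla\psi)$ and $\frac{d}{ds}\big|_{s=0}|\nabla(u+s\psi)|^{p} = pf^{p/2-1}\langle\nabla u,\nabla\psi\rangle$, which together give $D\Lambda_p|_u(\psi) = \operatorname{div}(f^{p/2-1}A(\nabla\psi)) - pf^{p/2-1}\langle\nabla u,\nabla\psi\rangle = \mathcal{L}(\psi)$. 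Hence $\bigl(\partial_t - \mathcal{L}\bigr)u_t = 0$.

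For \eqref{pv2-eqf}, I would first compute $\partial_t f = \partial_t\langle\nabla u,\nabla u\rangle = 2\langle\nabla u,\nabla u_t\rangle = 2\langle\nabla u,\nabla\Lambda_p(u)\rangle$, again invoking $u_t = \Lambda_p(u)$ in the last step. On the other hand, \eqref{bochner-real} can be solved for the cross term to give $2\langle\nabla u,\nabla\Lambda_p(u)\rangle = \mathcal{L}(f) - 2f^{p/2-1}(u_{ij}^2 + R_{ij}u_iu_j) - (\frac{p}{2}-1)f^{p/2-2}|\nabla f|^2$. Substituting this into the previous expression and rearranging yields $\bigl(\partial_t - \mathcal{L}\bigr)f = -2f^{p/2-1}(u_{ij}^2 + R_{ij}u_iu_j) - (\frac{p}{2}-1)f^{p/2-2}|\nabla f|^2$, as claimed.

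The genuine content is carried entirely by the nonlinear Bochner identity \eqref{bochner-real}, which is established exactly as Lemma \ref{bochner1} was -- by differentiating the analogue of \eqref{eq-u2} against $\nabla u$ and combining with the Bochner formula for $\Delta f$, now retaining the term $2\langle\nabla u,\nabla\Lambda_p(u)\rangle$ that was forced to vanish in the elliptic case. Granted that identity, the present lemma is a two-line bookkeeping argument and I do not foresee a real obstacle; the only points requiring a little care are that the formulas are asserted on $\{|\nabla u|>0\}$ and that no extra terms arise from differentiating the coefficients of $\mathcal{L}$, precisely because those coefficients are built from the same function $u$ that is being differentiated.
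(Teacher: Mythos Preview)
Your proposal is correct and follows exactly the approach the paper indicates: the paper simply says ``A bit of computation together with this formula [the nonlinear Bochner identity \eqref{bochner-real}] yields the following lemma,'' and you have supplied precisely that computation, using $u_t = \Lambda_p(u)$ together with $\mathcal{L} = D\Lambda_p|_u$ for \eqref{pv2-equt} and the rearranged Bochner identity for \eqref{pv2-eqf}.
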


An immediate consequence is the following result.
\begin{corollary}\label{cor:grad}  For any $\alpha>0$, let $F_\alpha=|\nabla u|^p+ \alpha u_t=f^{p/2}+\alpha u_t$.
Then
\begin{equation}\label{bochner-key}
\left(\frac{\partial }{\partial t}-\mathcal{L}\right) F_\alpha
=-pf^{p-2}\left(|\nabla \nabla u|^2_A +R_{ij} u_i u_j\right).
\end{equation}
\end{corollary}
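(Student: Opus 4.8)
The plan is to combine the two evolution identities in Lemma~\ref{bochner3} with the product/quotient-type formula already developed in the proof of Lemma~\ref{lem:heatopcomp}. Since $F_\alpha = f^{p/2} + \alpha u_t$ is just a linear combination, it suffices to compute $\left(\frac{\partial}{\partial t}-\mathcal{L}\right)f^{p/2}$ and $\left(\frac{\partial}{\partial t}-\mathcal{L}\right)u_t$ separately and add.

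For the $u_t$ term, equation \eqref{pv2-equt} of Lemma~\ref{bochner3} gives $\left(\frac{\partial}{\partial t}-\mathcal{L}\right)u_t = 0$ directly, so that piece contributes nothing. For the $f^{p/2}$ term, I would write $f^{p/2} = (f)^{p/2}$ and apply the chain rule for the operator $\frac{\partial}{\partial t}-\mathcal{L}$ acting on a power of $f$: one has
\[
\left(\frac{\partial}{\partial t}-\mathcal{L}\right)f^{p/2}
= \frac{p}{2}f^{p/2-1}\left(\frac{\partial}{\partial t}-\mathcal{L}\right)f
- \frac{p}{2}\left(\frac{p}{2}-1\right)f^{p/2-2}\,\langle A(\nabla f),\nabla f\rangle,
\]
where the correction term comes from the second-order part of $\mathcal{L}=\operatorname{div}(f^{p/2-1}A(\nabla\cdot))-pf^{p/2-1}\langle\nabla u,\nabla\cdot\rangle$ hitting the composition; note the first-order drift term $-pf^{p/2-1}\langle\nabla u,\nabla\cdot\rangle$ obeys the ordinary chain rule and produces no correction. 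Then I substitute \eqref{pv2-eqf}, namely $\left(\frac{\partial}{\partial t}-\mathcal{L}\right)f = -2f^{p/2-1}(u_{ij}^2+R_{ij}u_iu_j)-(\frac{p}{2}-1)f^{p/2-2}|\nabla f|^2$, to obtain
\[
\left(\frac{\partial}{\partial t}-\mathcal{L}\right)f^{p/2}
= -pf^{p-2}(u_{ij}^2+R_{ij}u_iu_j) - \frac{p}{2}\left(\frac{p}{2}-1\right)f^{p-3}|\nabla f|^2
- \frac{p}{2}\left(\frac{p}{2}-1\right)f^{p/2-2}\langle A(\nabla f),\nabla f\rangle.
\]

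The one genuine computation is to check that the $A$-Hessian norm packages correctly: I need $f^{p-2}|\nabla\nabla u|^2_A = f^{p-2}u_{ij}^2 + (\text{terms in }\langle\nabla f,\nabla u\rangle)$, using $f_k = 2u_{ik}u_i$ to rewrite $A^{ik}A^{jl}u_{ij}u_{kl} = u_{ij}^2 + (p-2)f^{-1}\langle\nabla f,\nabla u\rangle\cdot(\text{stuff}) + \dots$, and then verify that the two leftover gradient terms $-\frac{p}{2}(\frac{p}{2}-1)f^{p-3}|\nabla f|^2$ and $-\frac{p}{2}(\frac{p}{2}-1)f^{p/2-2}\langle A(\nabla f),\nabla f\rangle$ combine with the off-diagonal pieces of $|\nabla\nabla u|^2_A$ to cancel exactly, leaving precisely $-pf^{p-2}(|\nabla\nabla u|^2_A + R_{ij}u_iu_j)$. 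This bookkeeping is the main (and only) obstacle, but it is entirely parallel to the cancellation already carried out in Lemma~\ref{bochner1} and in \eqref{eq:fevol}, so I expect it to go through mechanically; in fact one can shortcut it by observing that $F_\alpha = -f^{p/2}\operatorname{tr}_A(\cdots)$-type identities already appeared and that \eqref{bochner-real} with $\Lambda_p(u) = u_t$ (since $u$ solves \eqref{pv2-equ}) can be invoked to replace $\langle\nabla u,\nabla(\Lambda_p(u))\rangle$ terms. Adding the (vanishing) $u_t$ contribution then yields \eqref{bochner-key}.
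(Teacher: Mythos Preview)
Your approach is the same as the paper's (which treats the corollary as immediate from Lemma~\ref{bochner3}, the passage from $f$ to $f^{p/2}$ being the direct analog of \eqref{eq:normpevol}). There is, however, a slip in your chain-rule formula: since the second-order part of $\mathcal{L}$ already carries the coefficient $f^{p/2-1}$, the correction term is
\[
-\frac{p}{2}\Bigl(\frac{p}{2}-1\Bigr)f^{p/2-2}\cdot f^{p/2-1}\langle A\nabla f,\nabla f\rangle
= -\frac{p}{2}\Bigl(\frac{p}{2}-1\Bigr)f^{p-3}\langle A\nabla f,\nabla f\rangle,
\]
not $f^{p/2-2}\langle A\nabla f,\nabla f\rangle$ as you wrote. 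With this fix the two leftover gradient terms combine to
$-\tfrac{p(p-2)}{4}f^{p-3}\bigl(|\nabla f|^2+\langle A\nabla f,\nabla f\rangle\bigr)$, and using $u^i u_{ij}=\tfrac{1}{2}f_j$ one expands
\[
|\nabla\nabla u|^2_A
= u_{ij}^2 + \frac{p-2}{2f}|\nabla f|^2 + \frac{(p-2)^2}{4f^2}\langle\nabla u,\nabla f\rangle^2,
\]
which matches exactly (since $\langle A\nabla f,\nabla f\rangle=|\nabla f|^2+(p-2)f^{-1}\langle\nabla u,\nabla f\rangle^2$). So the cancellation you anticipate does go through mechanically, but only once the power of $f$ is corrected; with the exponent you have, it would not close up.
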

Notice that (\ref{pv2-equ}) implies that
$$
F_1=f^{p/2-1}\tr_A (\nabla\nabla u).
$$
Hence
$$
f^{p-2}|\nabla \nabla u|^2_A\ge \frac{1}{n}F_1^2.
$$
In the case that $M$ is compact with nonnegative Ricci curvature, we
obtain at once the following global estimate.
\begin{theorem}\label{thm:ly-ddnonl} Let $M$ be a compact manifold  with
nonnegative Ricci curvature. Let $v$ be a positive solution to
(\ref{pv2-eqv}). Then for any $p> 1$,
\begin{equation}\label{lyp1}
(p-1)^p\frac{|\nabla v|^p}{v^p}-(p-1)\frac{v_t}{v}\le \frac {n}{pt}.
\end{equation}
\end{theorem}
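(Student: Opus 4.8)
The plan is to recast \eqref{lyp1} as an upper bound on the quantity $F_1$ of Corollary \ref{cor:grad} and then close it by the parabolic maximum principle. Put $u=-(p-1)\log v$, so that $v>0$ solves \eqref{pv2-eqv} exactly when $u$ solves \eqref{pv2-equ}. Since $\nabla u=-(p-1)\nabla v/v$ and $u_t=-(p-1)v_t/v$, one has $f^{p/2}=|\nabla u|^p=(p-1)^p|\nabla v|^p/v^p$, hence
$$
F_1=f^{p/2}+u_t=(p-1)^p\frac{|\nabla v|^p}{v^p}-(p-1)\frac{v_t}{v},
$$
so \eqref{lyp1} is precisely the assertion $tF_1\le n/p$ on $M\times(0,T)$.

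By Corollary \ref{cor:grad} with $\alpha=1$, together with $R_{ij}u_iu_j\ge 0$ and the inequality $f^{p-2}|\nabla\nabla u|^2_A\ge \frac1n F_1^2$ noted just after that corollary (which follows from $F_1=f^{p/2-1}\operatorname{tr}_A(\nabla\nabla u)$ and $n|T|^2_A\ge(\operatorname{tr}_A T)^2$), we get, on $\{|\nabla v|>0\}$,
$$
\left(\frac{\partial}{\partial t}-\mathcal{L}\right)F_1\le -\frac{p}{n}F_1^2 .
$$
Setting $G=tF_1$ then yields
$$
\left(\frac{\partial}{\partial t}-\mathcal{L}\right)G=F_1+t\left(\frac{\partial}{\partial t}-\mathcal{L}\right)F_1\le \frac{G}{t}\left(1-\frac{p}{n}G\right).
$$

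To conclude I would run the maximum principle. Fix $T'<T$; since $M$ is compact and $v$ smooth, $G$ is bounded on $M\times[0,T']$ with $G(\cdot,0)=0$, so either $\max G\le 0$ (and \eqref{lyp1} is trivial) or $G$ attains a positive maximum at some $(x_0,t_0)$ with $t_0>0$. There $\nabla G=0$, $\nabla\nabla G\le 0$ — whence $\mathcal{L}G\le 0$, since $A\ge 0$ and the first-order terms of $\mathcal{L}$ vanish at a critical point of $G$ — and $\partial_t G\ge 0$, so $\left(\frac{\partial}{\partial t}-\mathcal{L}\right)G\ge 0$ at $(x_0,t_0)$. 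Combined with the differential inequality above this forces $1-\frac pn G(x_0,t_0)\ge 0$, i.e. $\max_{M\times[0,T']}G\le n/p$; letting $T'\nearrow T$ gives \eqref{lyp1}.

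The one real obstacle is that $\mathcal{L}$ and the Bochner identity underlying Corollary \ref{cor:grad} degenerate on $\{|\nabla v|=0\}$, so a priori the maximum of $G$ could sit there; to handle this rigorously one should first establish the estimate for solutions with $|\nabla v|>0$ and then pass to the limit, exactly via the regularization of \eqref{eq:p-par} carried out in the previous section (approximate by strictly parabolic equations, prove the analogous differential inequality for the regularized quantities up to an $O(\epsilon)$ error, apply the maximum principle there, and send $\epsilon\to0$). I expect this limiting step, rather than the maximum-principle computation itself, to be where the actual work lies.
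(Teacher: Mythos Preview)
Your argument is correct and matches the paper's approach exactly: rewrite the left side of \eqref{lyp1} as $F_1$, apply Corollary \ref{cor:grad} with $\alpha=1$ together with $\Ric\ge 0$ and the trace inequality $f^{p-2}|\nabla\nabla u|_A^2\ge \frac{1}{n}F_1^2$ to get $(\partial_t-\mathcal{L})F_1\le -\frac{p}{n}F_1^2$, and close via the maximum principle on $tF_1$; the paper states the theorem immediately after this differential inequality as an immediate consequence.

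On the degeneracy point you flag, you are right that this is where the work lies, and the paper treats it just as you anticipate --- but not via the regularization of \eqref{eq:p-par} from Section 5. Instead, later in Section 6 the paper introduces the simpler regularization \eqref{eq:pv2approx}, namely replacing $f$ by $f_\e=|\nabla u_\e|^2+\e$ in \eqref{pv2-equ}, proves the identical Bochner formula and differential inequality for $F_{1,\e}$ (Lemma \ref{lem:approxgrad}), and obtains the bound $F_{1,\e}\le n/(pt)$ for the strictly parabolic approximants (Theorem \ref{thm:globalapproxge}), from which \eqref{lyp1} follows in the limit $\e\to 0$.
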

The result also holds on noncompact manifolds assuming the the left
hand side is bounded. Note that when $p=2$, the above reduces to Li-Yau's
estimate. Hence the corollary provides another nonlinear analogue of
Li-Yau's estimate for the heat equation.

\begin{remark} It is easy to check that for $p>1$, the function
\begin{equation}\label{ffs}
v_0=\left(\pi^{-n/2}
({p^*}^{p-1}pt)^{-n/p}\frac{\Gamma(n/2+1)}{\Gamma(n/p^*+1)}
\exp\left(-\frac{|x-x_0|^{p^*}}{(t{p^*}^{p-1}p)^{\frac{1}{p-1}}}
\right)\right)^{\frac{1}{p-1}} \end{equation} is a fundamental
solution of (\ref{pv2-eqv}) on $\mathbb{R}^n$, and achieves equality
in the estimate
(\ref{lyp1}). This demonstrates the sharpness of the estimate. Here
$p^*=\frac{p}{p-1}$.
\end{remark}

\begin{remark} It seems that an estimate in the above sharp form is not known except in
dimension one \cite{EV1}. It should be useful in the study of the
regularity of weak solutions of (\ref{pv2-eqv}).
\end{remark}

When $p=1$ we have the following result.
\begin{theorem} \label{thm:ly-ddnl2}Let $M$ be a compact manifold  with
nonnegative Ricci curvature. Let $u$ be a smooth solution to
(\ref{pv2-equ}). Then
\begin{equation}\label{lyp2}
|\nabla u|+u_t\le \frac {n-1}{t}.
\end{equation}
\end{theorem}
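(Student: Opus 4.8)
The plan is to follow the scheme of Theorem~\ref{thm:ly-ddnonl} at the exponent $p=1$, where the tensor $A$ degenerates to the orthogonal projection onto $(\nabla u)^{\perp}$ and the dimensional constant consequently improves from $n$ to $n-1$. Write $f=|\nabla u|^2$ and, following Corollary~\ref{cor:grad}, set $F_1=|\nabla u|+u_t=f^{1/2}+u_t$; work on the set $\{f>0\}$. Since \eqref{pv2-equ} with $p=1$ reads $u_t=\operatorname{div}(|\nabla u|^{-1}\nabla u)-|\nabla u|$, the identity recorded just after Corollary~\ref{cor:grad} gives
\[
    F_1=f^{-1/2}\,\tr_A(\nabla\nabla u),\qquad A^{ij}=g^{ij}-\frac{u^iu^j}{f},
\]
and in the orthonormal frame $e_1=\nabla u/|\nabla u|,e_2,\dots,e_n$ this becomes $F_1=f^{-1/2}\sum_{\alpha=2}^n u_{\alpha\alpha}$, while $|\nabla\nabla u|_A^2=\sum_{\alpha,\beta=2}^n u_{\alpha\beta}^2$.

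First I would invoke Corollary~\ref{cor:grad} with $\alpha=p=1$, obtaining the reaction-type identity
\[
    \left(\frac{\partial}{\partial t}-\mathcal{L}\right)F_1=-f^{-1}\left(|\nabla\nabla u|_A^2+R_{ij}u_iu_j\right).
\]
Nonnegativity of the Ricci curvature discards the term $R_{ij}u_iu_j$. For the Hessian term, the elementary inequality $(n-1)|T|_A^2\ge(\tr_A T)^2$ — valid because $A$ has rank $n-1$, so $\tr_A T=\sum_{\alpha=2}^n T_{\alpha\alpha}$ is a sum of $n-1$ of the terms appearing in $|T|_A^2=\sum_{\alpha,\beta=2}^n T_{\alpha\beta}^2$ — applied to $T=\nabla\nabla u$ yields $f^{-1}|\nabla\nabla u|_A^2\ge\frac{1}{n-1}f^{-1}(\tr_A\nabla\nabla u)^2=\frac{1}{n-1}F_1^2$, hence
\[
    \left(\frac{\partial}{\partial t}-\mathcal{L}\right)F_1\le-\frac{1}{n-1}\,F_1^2 .
\]
Now I would run the maximum principle. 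The principal part of $\mathcal{L}$ has the form $\operatorname{div}(f^{-1/2}A\,\nabla\,\cdot\,)$ with $A$ positive semidefinite, and the remaining term of $\mathcal{L}$ carries a factor $\nabla F_1$; hence at a spatial maximum $x_t$ of $F_1(\cdot,t)$ one has $\mathcal{L}(F_1)(x_t)\le0$. Writing $\bar F(t)=\max_M F_1(\cdot,t)$, the standard argument for Lipschitz maxima gives $\frac{d}{dt}\bar F\le-\frac{1}{n-1}\bar F^2$. Since the right-hand side is nonpositive, $\bar F$ is nonincreasing; fixing $t_0$ with $\bar F(t_0)>0$ (the case $\bar F(t_0)\le0$ being immediate), $\bar F$ stays positive on $(0,t_0]$, so $\frac{d}{dt}(1/\bar F)\ge\frac{1}{n-1}$ there, whence $\frac{1}{\bar F(t_0)}\ge\frac{1}{\bar F(s)}+\frac{t_0-s}{n-1}\ge\frac{t_0-s}{n-1}$ for $0<s\le t_0$; letting $s\to0^+$ forces $\frac{1}{\bar F(t_0)}\ge\frac{t_0}{n-1}$, which is precisely \eqref{lyp2}.

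The main obstacle is that \eqref{pv2-equ} is degenerate where $\nabla u=0$ — and on a compact manifold every time slice has critical points — so the equation and the quantity $F_1$ need not be classically meaningful there, and the computation above, carried out on $\{f>0\}$, is a priori only formal. I expect this to be resolved exactly as in Section~5: replace \eqref{pv2-eqv} by a family of uniformly parabolic regularizations, for which $A$ is replaced by a strictly positive $A_\epsilon$ degenerating to the projection as $\epsilon\to0$; the two algebraic inputs above (the identity of Corollary~\ref{cor:grad} and the projection inequality) persist in regularized form, giving a bound of the shape $(n-1)/((1-C\epsilon)t)$, and one then passes to the limit $\epsilon\to0$ using the energy estimates of Section~5. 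The remaining ingredients — the nonlinear Bochner identity and the elementary algebra — are routine.
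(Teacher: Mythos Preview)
Your argument is correct and matches the paper's approach: invoke the Bochner identity of Corollary~\ref{cor:grad} at $p=1$, use the rank-$(n-1)$ Cauchy--Schwarz inequality $(n-1)|T|_A^2\ge(\tr_A T)^2$ for the projection $A$ to obtain $(\partial_t-\mathcal{L})F_1\le-\tfrac{1}{n-1}F_1^2$, then conclude via the maximum principle and handle the degeneracy at critical points by $\epsilon$-regularization. You are in fact more explicit than the paper about the rank-$(n-1)$ improvement; the paper's own regularized estimate (Theorem~\ref{thm:globalapproxge}) literally reads $n/(pt)$, and the passage to the sharp constant $(n-1)/t$ in the limit is left just as implicit there as in your final paragraph.
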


Since the equation becomes degenerate when $\nabla u =0$, the above computation needs extra justification.
First we introduce the notion of a weak solution to as in \cite{Hein}.
For any  compact subset $K$, we define the functional
$$
J^p_u(w; K)\doteqdot \int_{K\times\{t\}}\frac{1}{p}|\nabla w|^p +w(|\nabla u|^p+u_t)\, d\mu
$$
for $w\in \mathcal{C}\doteqdot \{\varphi \in C^0(M \times [0, \infty))\,|\,\varphi(\cdot, t)\in C^{0,1}(M), \mbox{ and  for every } x\in M, \varphi(x, \cdot)\in C^{0,1}_{loc}(0, \infty)\}$ such that $\{w\ne u\}\subset \subset M$.
A function $u$ is called a {\it continuous weak solution } to \eqref{pv2-equ} if
$$
J^p_u(u; K)\le J^p_u(w; K)
$$
for any $w\in \mathcal{C}$.
Note that when $u$ is  smooth (locally), satisfying (\ref{pv2-equ}) with $|\nabla u|\ne 0$, we have that
\begin{eqnarray*}
J^p_u(u; K)-J^p_u(w; K)&=&\int_K \frac{1}{p}(|\nabla u|^p-|\nabla w|^p)+(u-w)\operatorname{div}(|\nabla u|^{p-2}\nabla u)\, d\mu\\
&\le &\int_K \frac{1}{p}(|\nabla u|^p-|\nabla w|^p)-|\nabla u|^p+|\nabla u|^{p-1}|\nabla w|\, d\mu\\
&\le& 0
\end{eqnarray*}
by Young's inequality.  To obtain weak solutions more generally, we use an
$\epsilon$-regularization process, replacing the equation (\ref{pv2-equ}) by
the  approximate version
\begin{equation}
\label{eq:pv2approx}
    \frac{\partial u_{\epsilon}}{\partial t} = \Lambda_{p, \e}(u_{\e})\doteqdot
    \operatorname{div}(f_{\e}^{p/2 -1}\nabla u_{\e}) - f_{\e}^{p/2}
\end{equation}
where
$
 f_{\e} \doteqdot |\nabla u_{\e}|^2 + \e.
$
Since (\ref{eq:pv2approx}) is strictly parabolic, one can apply the
established theory of parabolic equations to obtain a solution $u_\epsilon$ and take the limit $\epsilon \to 0$  to obtain a {\it continuous weak solution} to (\ref{pv2-equ}) as in \cite{Hein}.
We next show that an estimate analogous to (in fact, somewhat stronger than)
\eqref{lyp2} can be obtained for solutions
$u_\epsilon$ to (\ref{eq:pv2approx}), and with this estimate recover \eqref{lyp2} in the limit
as $u_\epsilon \to u$.

It will be convenient to introduce the notation
\begin{equation*}
  \mathcal{L}_{\e}(\psi) \doteqdot D\Lambda_{p, \e}[u_{\e}](\psi)
  =\operatorname{div}(f^{p/2 -1}_{\e}A_{\e}\nabla\psi)
  - pf_{\e}^{p/2-1}\langle\nabla u_{\e}, \nabla \psi\rangle
\end{equation*}
for the linearization of the operator $\Lambda_{p, \e}$,
where
\[
 A_{\e} = \id + (p-2)\frac{\nabla u_{\e}\otimes\nabla u_{\e}}{f_{\e}}.
\]
It is easy to check that
\[
  \left(\mathcal{L}_{\e} - \pdt\right) \pd{u_{\e}}{t} = 0,
\]
and a computation as before establishes the following Bochner-type formula.
\begin{lemma}
\begin{eqnarray}\label{bochner-epsilon}
\mathcal{L}_{\e}(f_\e)&=&2f_\e^{p/2-1}\left(|\nabla \nabla u_{\epsilon}|^2+\Ric(\nabla u_\epsilon, \nabla u_\e)\right)+2\langle \nabla u_\e, \nabla (\Lambda_{p,\e}(u_\epsilon))\rangle \nonumber\\
&\,& +(\frac{p}{2}-1)|\nabla f_\e|^2f^{p/2-2}_\e.
\end{eqnarray}
In particular,
\begin{equation*}
  \left(\mathcal{L}_{\e} - \pdt\right) f^{p/2}_{\e} =
  p f_{\e}^{p-2}\left(\Ric(\nabla u_{\e}, \nabla u_{\e})
    + |\nabla\nabla u_{\e}|^2_{A_{\e}}\right).
\end{equation*}

\end{lemma}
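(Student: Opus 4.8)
The identity \eqref{bochner-epsilon} is the $\e$-regularized counterpart of the nonlinear Bochner formula \eqref{bochner-real}, and the plan is to prove it by repeating, essentially verbatim, the computation behind Lemma \ref{bochner1} with $u$ replaced by $u_\e$, the weight $f=|\nabla u|^2$ by $f_\e=|\nabla u_\e|^2+\e$, $A$ by $A_\e$, and $\Lambda_p$ by $\Lambda_{p,\e}$. Concretely: first expand $\mathcal{L}_\e(f_\e)=\operatorname{div}(f_\e^{p/2-1}A_\e\nabla f_\e)-pf_\e^{p/2-1}\langle\nabla u_\e,\nabla f_\e\rangle$ term by term exactly as in the first display of that proof; then substitute the classical Bochner identity $\Delta f_\e = 2|\nabla\nabla u_\e|^2+2\langle\nabla\Delta u_\e,\nabla u_\e\rangle+2\Ric(\nabla u_\e,\nabla u_\e)$, which is legitimate because the additive constant $\e$ is annihilated by every derivative, so $\nabla f_\e=\nabla|\nabla u_\e|^2$ and $\Delta f_\e=\Delta|\nabla u_\e|^2$; and finally eliminate the third-order term $\langle\nabla\Delta u_\e,\nabla u_\e\rangle$ by differentiating the ``equation of state'' $\operatorname{div}(f_\e^{p/2-1}\nabla u_\e)=\Lambda_{p,\e}(u_\e)+f_\e^{p/2}$, i.e. $(\frac{p}{2}-1)f_\e^{p/2-2}\langle\nabla f_\e,\nabla u_\e\rangle+f_\e^{p/2-1}\Delta u_\e=\Lambda_{p,\e}(u_\e)+f_\e^{p/2}$, and pairing its gradient with $\nabla u_\e$.

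The only structural difference from Lemma \ref{bochner1} is that $u_\e$ need not solve the elliptic equation, so the right-hand side of the equation of state carries the extra summand $\Lambda_{p,\e}(u_\e)$; differentiating it and contracting with $\nabla u_\e$ therefore leaves a residual $\langle\nabla\Lambda_{p,\e}(u_\e),\nabla u_\e\rangle$, which, after being substituted (and multiplied by $2$, exactly as in the passage from \eqref{eq-help1} to \eqref{bochner-real}), becomes the term $2\langle\nabla u_\e,\nabla(\Lambda_{p,\e}(u_\e))\rangle$ in \eqref{bochner-epsilon}. No analytic care is needed at this stage: since $f_\e\ge\e>0$ everywhere, the computation is pointwise valid on all of $M$, which is the whole point of the regularization.

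For the ``in particular'' statement I would first combine \eqref{bochner-epsilon} with $\partial_t f_\e = 2\langle\nabla(\partial_t u_\e),\nabla u_\e\rangle = 2\langle\nabla\Lambda_{p,\e}(u_\e),\nabla u_\e\rangle$, which holds by \eqref{eq:pv2approx}; the two copies of $\langle\nabla\Lambda_{p,\e}(u_\e),\nabla u_\e\rangle$ cancel and one obtains the $\e$-analogue of \eqref{pv2-eqf},
\[
(\mathcal{L}_\e-\partial_t)f_\e = 2f_\e^{p/2-1}\bigl(|\nabla\nabla u_\e|^2+\Ric(\nabla u_\e,\nabla u_\e)\bigr)+\bigl(\tfrac{p}{2}-1\bigr)f_\e^{p/2-2}|\nabla f_\e|^2 .
\]
Next, writing $\mathcal{L}_\e\psi=\operatorname{div}(f_\e^{p/2-1}A_\e\nabla\psi)-pf_\e^{p/2-1}\langle\nabla u_\e,\nabla\psi\rangle$ and applying the elementary chain rule $(\mathcal{L}_\e-\partial_t)\Phi(f_\e)=\Phi'(f_\e)(\mathcal{L}_\e-\partial_t)f_\e+\Phi''(f_\e)f_\e^{p/2-1}\langle A_\e\nabla f_\e,\nabla f_\e\rangle$ with $\Phi(s)=s^{p/2}$ (the same device by which \eqref{eq:normpevol} is obtained from Lemma \ref{bochner2}) gives
\[
(\mathcal{L}_\e-\partial_t)f_\e^{p/2}=pf_\e^{p-2}\bigl(|\nabla\nabla u_\e|^2+\Ric(\nabla u_\e,\nabla u_\e)\bigr)+\tfrac{p}{2}\bigl(\tfrac{p}{2}-1\bigr)f_\e^{p-3}\bigl(|\nabla f_\e|^2+\langle A_\e\nabla f_\e,\nabla f_\e\rangle\bigr).
\]
The asserted form then follows from the pointwise algebraic identity
\[
|\nabla\nabla u_\e|^2_{A_\e}-|\nabla\nabla u_\e|^2=\tfrac12\bigl(\tfrac{p}{2}-1\bigr)f_\e^{-1}\bigl(|\nabla f_\e|^2+\langle A_\e\nabla f_\e,\nabla f_\e\rangle\bigr),
\]
which I would check just as in the proof of Theorem \ref{main1}: pick an orthonormal frame with $e_1=\nabla u_\e/|\nabla u_\e|$, so that $A_\e$ is diagonal with $(A_\e)_{11}=1+(p-2)|\nabla u_\e|^2/f_\e$ and $(A_\e)_{jj}=1$ for $j\ge2$, and use $f_{\e,k}=2u_{\e,1k}|\nabla u_\e|$ (the evident analogue of \eqref{help1}) to express the off-diagonal Hessian entries of $u_\e$ through $\nabla f_\e$.

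The main difficulty is purely organizational rather than conceptual: one must track the powers of $f_\e$ carefully through the chain-rule step, and verify the last algebraic identity with the ``shifted'' weight $f_\e=|\nabla u_\e|^2+\e$ sitting inside $A_\e$ (so that $(A_\e)_{11}$ is $1+(p-2)|\nabla u_\e|^2/f_\e$, not $p-1$, and only collapses to the unweighted case in the limit $\e\to0$). It is also worth noting at the outset that $\nabla u_\e$ may vanish somewhere; this is harmless, since every term in the statement is written through $f_\e\ge\e$, and the frame argument for the algebraic identity is run on $\{\nabla u_\e\ne0\}$ and then extended by continuity (or carried out invariantly).
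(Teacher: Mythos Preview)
Your proposal is correct and follows exactly the approach the paper intends: it explicitly carries out the computation of Lemma~\ref{bochner1} with the $\e$-shifted quantities (the paper itself says only ``a computation as before establishes the following Bochner-type formula''). Your treatment of the ``in particular'' statement via the chain rule for $\mathcal{L}_\e-\partial_t$ applied to $\Phi(s)=s^{p/2}$, together with the algebraic identity relating $|\nabla\nabla u_\e|^2_{A_\e}-|\nabla\nabla u_\e|^2$ to the $A_\e$-weighted gradient terms, is a clean way to make precise what the paper leaves implicit, and your verification in the adapted frame (with $(A_\e)_{11}=1+(p-2)|\nabla u_\e|^2/f_\e$ rather than $p-1$) correctly handles the shift.
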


Thus we obtain the analog of
Corollary \ref{cor:grad} for solutions to our approximate equation
\eqref{eq:pv2approx}.
\begin{lemma}\label{lem:approxgrad}
For all $\alpha > 0$,
\[
F_{\alpha, \epsilon} \doteqdot f_{\e}^{p/2} + \alpha \pd{u_{\e}}{t}
\]
satisfies
\begin{equation}\label{eq:approxgrad}
 \left(\mathcal{L}_{\e} - \pdt\right)F_{\alpha, \e} =p f_{\e}^{p-2}\left(\Ric(\nabla u_{\e}, \nabla u_{\e})
    + |\nabla\nabla u_{\e}|^2_{A_{\e}}\right).
\end{equation}
\end{lemma}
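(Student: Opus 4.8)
The plan is to deduce \eqref{eq:approxgrad} from two facts already in hand: the identity $\left(\mathcal{L}_{\e}-\pdt\right)\pd{u_{\e}}{t}=0$, recorded just above the Bochner-type formula \eqref{bochner-epsilon}, and the ``in particular'' clause of that lemma, namely $\left(\mathcal{L}_{\e}-\pdt\right)f_{\e}^{p/2}=p f_{\e}^{p-2}\left(\Ric(\nabla u_{\e},\nabla u_{\e})+|\nabla\nabla u_{\e}|^{2}_{A_{\e}}\right)$. Since $F_{\alpha,\e}=f_{\e}^{p/2}+\alpha\,\pd{u_{\e}}{t}$ and $\mathcal{L}_{\e}-\pdt$ is linear, adding $\alpha$ times the first identity to the second gives \eqref{eq:approxgrad} at once. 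So the only point that really needs work is the ``in particular'' clause, which I would establish by repeating, essentially verbatim, the computation behind Corollary~\ref{cor:grad} (equivalently Lemma~\ref{bochner3}), now with $f_{\e}$ and $A_{\e}$ in place of $f$ and $A$.

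Concretely, first differentiate $f_{\e}=|\nabla u_{\e}|^{2}+\e$ in $t$ and use $\pd{u_{\e}}{t}=\Lambda_{p,\e}(u_{\e})$ to get $\pd{f_{\e}}{t}=2\langle\nabla u_{\e},\nabla\Lambda_{p,\e}(u_{\e})\rangle$; subtracting this from \eqref{bochner-epsilon} cancels the cross term and leaves $\left(\mathcal{L}_{\e}-\pdt\right)f_{\e}=2f_{\e}^{p/2-1}\left(|\nabla\nabla u_{\e}|^{2}+\Ric(\nabla u_{\e},\nabla u_{\e})\right)+\left(\frac{p}{2}-1\right)f_{\e}^{p/2-2}|\nabla f_{\e}|^{2}$. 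Next apply the chain rule for the second-order operator $\mathcal{L}_{\e}$, that is $\mathcal{L}_{\e}\big(G(f_{\e})\big)=G'(f_{\e})\mathcal{L}_{\e}(f_{\e})+G''(f_{\e})f_{\e}^{p/2-1}\langle A_{\e}\nabla f_{\e},\nabla f_{\e}\rangle$ with $G(s)=s^{p/2}$, together with the ordinary chain rule for $\pdt$, to obtain $\left(\mathcal{L}_{\e}-\pdt\right)f_{\e}^{p/2}=p f_{\e}^{p-2}\left(|\nabla\nabla u_{\e}|^{2}+\Ric(\nabla u_{\e},\nabla u_{\e})\right)+\frac{p}{2}\left(\frac{p}{2}-1\right)f_{\e}^{p-3}\left(|\nabla f_{\e}|^{2}+\langle A_{\e}\nabla f_{\e},\nabla f_{\e}\rangle\right)$. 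Finally, expanding $A_{\e}^{ik}A_{\e}^{jl}u_{ij}u_{kl}$ in an orthonormal frame with $\nabla u_{\e}=|\nabla u_{\e}|e_{1}$ and using $(\nabla f_{\e})_{k}=2u_{ik}u_{i}$ gives the algebraic identity $|\nabla\nabla u_{\e}|^{2}_{A_{\e}}-|\nabla\nabla u_{\e}|^{2}=\frac{p-2}{2}\,\frac{|\nabla f_{\e}|^{2}}{f_{\e}}+\frac{(p-2)^{2}}{4}\,\frac{\langle\nabla u_{\e},\nabla f_{\e}\rangle^{2}}{f_{\e}^{2}}$, which converts the last bracketed term into exactly $p f_{\e}^{p-2}\left(|\nabla\nabla u_{\e}|^{2}_{A_{\e}}-|\nabla\nabla u_{\e}|^{2}\right)$. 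This proves the ``in particular'' clause, and hence \eqref{eq:approxgrad}.

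The difficulty here is purely bookkeeping rather than conceptual: one must carry along the non-divergence correction term $G''(f_{\e})f_{\e}^{p/2-1}\langle A_{\e}\nabla f_{\e},\nabla f_{\e}\rangle$ produced by the chain rule for $\mathcal{L}_{\e}$ and check, using $\langle A_{\e}\nabla f_{\e},\nabla f_{\e}\rangle=|\nabla f_{\e}|^{2}+(p-2)f_{\e}^{-1}\langle\nabla u_{\e},\nabla f_{\e}\rangle^{2}$, that it matches the anisotropic part of $|\nabla\nabla u_{\e}|^{2}_{A_{\e}}$. Since every step is word-for-word parallel to the computations behind Lemma~\ref{bochner1}, Lemma~\ref{bochner2}, and Corollary~\ref{cor:grad} --- with the constant $\e$ harmless, being annihilated by $\nabla$, by $\pdt$, and by $\mathcal{L}_{\e}$ --- no new idea is required, and the remaining details may be left to the reader as is done for \eqref{bochner-epsilon} itself.
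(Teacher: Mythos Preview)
Your proof is correct and follows exactly the paper's approach: the lemma is obtained by linearity from the two facts recorded just before it, namely $\left(\mathcal{L}_{\e}-\pdt\right)\pd{u_{\e}}{t}=0$ and the ``in particular'' clause $\left(\mathcal{L}_{\e}-\pdt\right)f_{\e}^{p/2}=p f_{\e}^{p-2}\bigl(\Ric(\nabla u_{\e},\nabla u_{\e})+|\nabla\nabla u_{\e}|^{2}_{A_{\e}}\bigr)$. The paper gives no further details, so your additional verification of the ``in particular'' clause via the chain rule for $\mathcal{L}_{\e}$ and the algebraic identity for $|\nabla\nabla u_{\e}|^{2}_{A_{\e}}-|\nabla\nabla u_{\e}|^{2}$ is a welcome (and correct) elaboration rather than a departure.
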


In the same way as before, we note
\[
   F_{1, \epsilon} = \operatorname{div}(f_{\e}^{p/2-1}\nabla u_{\e})
   = f_{\e}^{p/2 -1}\operatorname{tr}_{A_{\e}}(\nabla\nabla u_{\e})
\]
so
\[
   f_{\e}^{p-2}|\nabla\nabla u|^2_{A_{\e}} \ge \frac{1}{n}F_{1, \e}^2,
\]
which, with the above lemma, implies the following global estimate.
\begin{theorem}\label{thm:globalapproxge}
Suppose $u_{\e}$ is a positive solution to \eqref{eq:pv2approx} on the compact
manifold $M$ of non-negative Ricci curvature.  Then, for all $p > 1$ and
all $t > 0$,
\begin{equation}
  (|\nabla u_{\e}|^2 + \epsilon)^{p/2} + \pd{u_{\e}}{t} \leq \frac{n}{pt}.
\end{equation}
\end{theorem}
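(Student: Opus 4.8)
The plan is to apply the parabolic maximum principle to the quantity $F_{1,\e}$, exploiting the quadratic lower bound supplied by Lemma \ref{lem:approxgrad}. Taking $\alpha = 1$ there, one has $F_{1,\e} = f_{\e}^{p/2} + \partial u_{\e}/\partial t = \operatorname{div}(f_{\e}^{p/2-1}\nabla u_{\e}) = f_{\e}^{p/2-1}\tr_{A_\e}(\nabla\nabla u_{\e})$, so that $f_{\e}^{p-2}|\nabla\nabla u_{\e}|^2_{A_{\e}} \ge \tfrac1n F_{1,\e}^2$ by the inequality $n|T|^2_{A_\e} \ge (\tr_{A_\e}T)^2$, valid since $A_\e$ is positive definite for $p > 1$. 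Combined with $\Ric \ge 0$, equation \eqref{eq:approxgrad} yields
\[
  \left(\pdt - \mathcal{L}_{\e}\right) F_{1,\e} \;\le\; -\frac{p}{n}\,F_{1,\e}^2 \;\le\; 0 .
\]
Because \eqref{eq:pv2approx} is strictly parabolic, $f_{\e} \ge \e > 0$ along the smooth solution $u_{\e}$, so $\mathcal{L}_{\e}$ is a bona fide linear, uniformly elliptic operator with smooth coefficients; on the compact manifold $M$ the maximum principle therefore applies to $F_{1,\e}$ and its modifications.

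Next I would compare with the ODE $y' = -\tfrac{p}{n}y^2$, whose maximal solution emanating from $+\infty$ at $t = 0$ is $y(t) = n/(pt)$. Concretely, set $G \doteqdot t\,F_{1,\e}$. Since $t$ is independent of $x$ we have $\mathcal{L}_{\e}G = t\,\mathcal{L}_{\e}F_{1,\e}$, and the displayed inequality gives
\[
  \left(\pdt - \mathcal{L}_{\e}\right) G \;\le\; F_{1,\e} - \frac{p}{n}\,t\,F_{1,\e}^2 \;=\; \frac{G}{t}\Bigl(1 - \frac{p}{n}G\Bigr).
\]
Now $G(\cdot,0) \equiv 0$, and I claim $\psi(t) \doteqdot \max_{x\in M} G(x,t) \le n/p$ for all $t > 0$. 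Indeed $\psi$ is continuous with $\psi(0) = 0$; if $\psi(t_1) > n/p$ for some $t_1$, put $t_0 \doteqdot \sup\{t \le t_1 : \psi(t) \le n/p\}$, so $\psi(t_0) = n/p$ and $\psi > n/p$ on $(t_0, t_1]$. Evaluating the inequality at a spatial maximum point of $G(\cdot,t)$ (where $\nabla G = 0$ and $\nabla^2 G \le 0$, hence $\mathcal{L}_{\e}G \le 0$), and using Hamilton's Lipschitz‑in‑$t$ trick for $\max$, we obtain for a.e.\ $t \in (t_0, t_1]$ that $\psi'(t) \le \tfrac{\psi(t)}{t}\bigl(1 - \tfrac{p}{n}\psi(t)\bigr) < 0$, contradicting $\psi(t_1) > \psi(t_0)$. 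Hence $F_{1,\e} = G/t \le n/(pt)$, which is precisely $(|\nabla u_{\e}|^2 + \e)^{p/2} + \partial u_{\e}/\partial t \le n/(pt)$.

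The genuine computational content — the Bochner-type identity \eqref{bochner-epsilon} and its consequence \eqref{eq:approxgrad} — has already been assembled, so the one remaining subtlety is the soft analytic point of justifying the ODE comparison: that $\max_M G(\cdot,t)$ is locally Lipschitz with upper Dini derivative controlled by $\partial_t G$ at a spatial maximizer, and that the bound propagates down to $t = 0$. If one wishes to avoid referring to the solution at $t = 0$, one can instead fix $s > 0$, run the same argument with $G_s \doteqdot (t - s)F_{1,\e}$ on $\{t \ge s\}$ where $G_s(\cdot, s) \equiv 0$, deduce $F_{1,\e} \le n/(p(t-s))$, and let $s \searrow 0$. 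Either way, the estimate is uniform in $\e$, which is exactly what is needed to let $\e \to 0$ and recover \eqref{lyp2} — and, by the analogous argument for general $p > 1$, the estimate \eqref{lyp1} — for the weak solution $u$.
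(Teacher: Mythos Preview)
Your proof is correct and follows exactly the approach the paper indicates: the paper simply records $F_{1,\e}=f_\e^{p/2-1}\tr_{A_\e}(\nabla\nabla u_\e)$, hence $f_\e^{p-2}|\nabla\nabla u_\e|^2_{A_\e}\ge \tfrac{1}{n}F_{1,\e}^2$, and then states that together with Lemma~\ref{lem:approxgrad} this ``implies the following global estimate,'' leaving the standard ODE/maximum-principle comparison $\bigl(\pdt-\mathcal{L}_\e\bigr)F_{1,\e}\le -\tfrac{p}{n}F_{1,\e}^2\Rightarrow F_{1,\e}\le \tfrac{n}{pt}$ implicit. You have merely written out that comparison in detail (via $G=tF_{1,\e}$), which is the same argument with the analytic bookkeeping made explicit.
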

Hence Theorem \ref{thm:ly-ddnonl} and Theorem \ref{thm:ly-ddnl2} hold for the weak solutions obtained via the $\epsilon$-regularization process.

The Bochner-type formula (\ref{bochner-key}) is effective enough to
give a nonlinear entropy formula (see \cite{N1} for the entropy
formula for the heat equation). For this purpose, we first observe
that $\int_M v^{p-1}\, d\mu$ is preserved under the equation
(\ref{pv2-eqv}). A little less obvious, perhaps, is the following
conservation law.

\begin{proposition} For any smooth $\psi$,
\begin{equation}\label{pv2-hp1}
\frac{d}{dt} \int_M \psi v^{p-1}\, d\mu =\int_M
\left(\left(\frac{\partial}{\partial t}-\mathcal{L}\right)
\psi\right)v^{p-1}\, d\mu.
\end{equation}
In particular, if  $\psi$ satisfies
\begin{equation}\label{pv2-linear}
\left(\frac{\partial}{\partial t}-\mathcal{L}\right) \psi =0
\end{equation}
then $\int_M \psi v^{p-1}\, d\mu$ is a constant along the equation
(\ref{pv2-eqv}) and (\ref{pv2-linear}).
\end{proposition}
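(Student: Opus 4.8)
The plan is to show that $\mathcal{L}$ is formally self-adjoint with respect to the weighted measure $v^{p-1}\,d\mu$ in the precise form needed here, namely
$$
\int_M (\mathcal{L}\psi)\,v^{p-1}\,d\mu = (p-1)^{p-1}\int_M |\nabla v|^{p-2}\langle\nabla\psi,\nabla v\rangle\,d\mu,
$$
and then to recognize the right-hand side as exactly $-\int_M \psi\,\partial_t(v^{p-1})\,d\mu$ via equation (\ref{pv2-eqv}). Since the background measure $d\mu$ does not evolve, differentiating under the integral sign gives
$$
\frac{d}{dt}\int_M \psi\,v^{p-1}\,d\mu = \int_M \psi_t\,v^{p-1}\,d\mu + \int_M \psi\,\partial_t(v^{p-1})\,d\mu,
$$
so the proposition, and with it the "in particular" statement via (\ref{pv2-linear}), follows once the two computations are matched.

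First I would treat the second term: by (\ref{pv2-eqv}) and integration by parts,
$$
\int_M \psi\,\partial_t(v^{p-1})\,d\mu = (p-1)^{p-1}\int_M \psi\,\operatorname{div}(|\nabla v|^{p-2}\nabla v)\,d\mu = -(p-1)^{p-1}\int_M |\nabla v|^{p-2}\langle\nabla\psi,\nabla v\rangle\,d\mu.
$$
Next I would compute $\int_M(\mathcal{L}\psi)v^{p-1}\,d\mu$ directly from $\mathcal{L}(\psi)=\operatorname{div}(f^{p/2-1}A(\nabla\psi))-pf^{p/2-1}\langle\nabla u,\nabla\psi\rangle$ with $f=|\nabla u|^2$. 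Integrating the divergence term by parts and using $\nabla(v^{p-1})=(p-1)v^{p-2}\nabla v$ yields $-(p-1)\int_M f^{p/2-1}v^{p-2}\langle A(\nabla\psi),\nabla v\rangle\,d\mu$. The key algebraic observation is that, because $A=\id+(p-2)\nabla u\otimes\nabla u/f$, one has $A(\nabla u)=(p-1)\nabla u$; together with the symmetry of $A$ and the relation $\nabla u=-(p-1)v^{-1}\nabla v$ coming from $u=-(p-1)\log v$, this gives $\langle A(\nabla\psi),\nabla v\rangle=-v\langle\nabla\psi,\nabla u\rangle$. Substituting, the divergence term becomes $(p-1)\int_M f^{p/2-1}v^{p-1}\langle\nabla u,\nabla\psi\rangle\,d\mu$, which combines with the $-p\int_M f^{p/2-1}v^{p-1}\langle\nabla u,\nabla\psi\rangle\,d\mu$ from the second term of $\mathcal{L}$ to leave $-\int_M f^{p/2-1}v^{p-1}\langle\nabla u,\nabla\psi\rangle\,d\mu$. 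Finally, inserting $f^{p/2-1}=(p-1)^{p-2}v^{2-p}|\nabla v|^{p-2}$ and $\nabla u=-(p-1)v^{-1}\nabla v$ converts this into $(p-1)^{p-1}\int_M |\nabla v|^{p-2}\langle\nabla\psi,\nabla v\rangle\,d\mu$, i.e. exactly $-\int_M\psi\,\partial_t(v^{p-1})\,d\mu$. Plugging back into the displayed derivative gives $\frac{d}{dt}\int_M\psi v^{p-1}\,d\mu=\int_M(\psi_t-\mathcal{L}\psi)v^{p-1}\,d\mu$, which is (\ref{pv2-hp1}).

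The only genuine subtlety is the legitimacy of the two integrations by parts. On a closed manifold there is nothing to check; on a complete noncompact $M$ one restricts to $\psi$ (and $v$) with sufficient decay, or inserts a cutoff $\varphi_R$ and passes to the limit, controlling the error terms by the assumed finiteness of the integrals in question. This is the step I expect to require the most care in a fully rigorous treatment, but it is routine once such hypotheses are in force; the algebraic heart of the statement is the identity $A(\nabla u)=(p-1)\nabla u$, which makes the weighted $\mathcal{L}$-integral collapse to the $p$-Laplacian energy pairing driving equation (\ref{pv2-eqv}).
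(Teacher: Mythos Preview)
Your argument is correct and follows essentially the same route as the paper: differentiate under the integral, integrate the divergence term in $\mathcal{L}$ by parts against $v^{p-1}$, and use the relations $\nabla v^{p-1}=-v^{p-1}\nabla u$ and $(p-1)^{p-1}|\nabla v|^{p-2}\nabla v=-v^{p-1}|\nabla u|^{p-2}\nabla u$ to see that the cross terms cancel. Your use of the eigenvector identity $A(\nabla u)=(p-1)\nabla u$ is a clean shortcut for what the paper does by expanding $A$ explicitly, but the computations are otherwise identical, and your remark on the need to justify the integrations by parts in the noncompact case matches the paper's own caveat.
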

\begin{proof}
Notice that $\nabla v =-\frac{v}{p-1}\nabla u$, hence
$(p-1)^{p-1}|\nabla v|^{p-2}\nabla v=-v^{p-1}|\nabla u|^{p-2}\nabla
u$. Direct computation shows that
\begin{eqnarray*}
\frac{d}{dt}\int_M\psi v^{p-1}\, d\mu &=&\int_M \psi_t v^{p-1} +\psi
(v^{p-1})_t \\
&=&\int_M \left(\left(\frac{\partial}{\partial t}-\mathcal{L}\right)
\psi\right)v^{p-1}-\int_M |\nabla
u|^{p-2}\langle A(\nabla \psi), \nabla v^{p-1} \rangle\\
&\quad&-(p-1)\int_M |\nabla u|^{p-2}\langle \nabla u, \nabla \psi\rangle
v^{p-1}.
\end{eqnarray*}
The result follows from the observations that $\nabla
v^{p-1}=-v^{p-1}\nabla u$ and
\begin{eqnarray*}
-\int_M |\nabla
u|^{p-2}\langle A(\nabla \psi), \nabla v^{p-1} \rangle&=&
-\int_M |\nabla u|^{p-2} \langle \nabla \psi, \nabla v^{p-1}\rangle
\\&\quad &-(p-2) \int_M |\nabla u|^{p-4}\langle \nabla \psi, \nabla u\rangle
\langle \nabla u, \nabla v^{p-1}\rangle\\
&=& (p-1)\int_M |\nabla u|^{p-2}v^{p-1}\langle \nabla \psi, \nabla
u\rangle.
\end{eqnarray*}
\end{proof}

\begin{remark}\label{regular} In the above we do not really  need that $\psi$ is smooth.
The argument carries over assuming only the summability of the integrands.
\end{remark}

Let $(a_{ij})$ be the inverse of $(A^{ij})$. Explicitly,
$a_{ij}=g_{ij}-\frac{p-2}{p-1}\frac{u_i u_j}{f}$, and can be viewed as
a metric tensor. Let $v$ be a positive solution to
(\ref{pv2-eqv}) satisfying $\int_M v^{p-1}\, d\mu =1$. Define
$$
\mathcal{N}_p(v, t)=\int_M v^{p-1} u \, d\mu-\frac{n}{p}\log t
$$
and denote the first term on the right hand side by $N(v, t)$.
Note that $u=-\log(v^{p-1})$. Following \cite{N1}, (see also
\cite{FIN}), define $\mathcal{F}_p(v, t)=\frac{d}{ d t}
\mathcal{N}_p(v, t)$ and $\mathcal{W}_p(v, t)=\frac{d}{d t}
(t\mathcal{N}_p(v, t))$. More explicitly, motivated by (\ref{ffs}),
if we write
$$
v^{p-1}=\frac{1}{\pi^{n/2}({p^*}^{p-1}p)^{\frac{n}{p}}}\frac{\Gamma(n/2+1)}{\Gamma(n/p^*+1)}
\frac{e^{-\varphi}}{t^{\frac{n}{p}}},
$$
then define
$$
\mathcal{W}_p(\varphi, t)=\int_M (t|\nabla \varphi|^p+\varphi
-n)v^{p-1}\, d\mu.
$$
For the case $p=1$, it is helpful to write $\mathcal{W}$ in terms of $u$,
$$
\mathcal{W}_p(u, t)=\int_M \left(t|\nabla u|^p+u-\frac{n}{p}\log
t+\log
\left(\frac{1}{\pi^{n/2}({p^*}^{p-1}p)^{\frac{n}{p}}}\frac{\Gamma(n/2+1)}{\Gamma(n/p^*+1)}
\right)-n\right)e^{-u}\, d\mu,
$$
which  becomes
$$
\mathcal{W}_1(u, t)=\int_M \left( t|\nabla u|+u-n\log t +\log
\frac{\Gamma(n/2+1)}{\pi^{n/2}}-n\right)e^{-u}\, d\mu
$$
when $p=1$.
Note that when $p=2$, this is precisely the entropy
defined in \cite{N1}.

For the entropy quantity $\mathcal{W}_p(v,t)$
we have the following nonlinear entropy formula.
\begin{theorem}\label{pv2-entropy}
Let $v$ be a positive solution to (\ref{pv2-eqv}) satisfying $\int_M
v^{p-1}\, d\mu =1$. Then
\begin{equation}\label{pv2-ef1}
\frac{d}{d t}\mathcal{W}_p(v, t)=-tp\int_M
\left(\left|f^{p/2-1}\nabla_i \nabla_j u
-\frac{1}{tp}a_{ij}\right|^2_{A}+f^{p-2}R_{ij} u_i u_j\right)
v^{p-1}\, d\mu.
\end{equation}
\end{theorem}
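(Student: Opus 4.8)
The plan is to differentiate $\mathcal{W}_p(v,t)$ directly, following the scheme of \cite{N1}, and to reduce the computation to the Bochner-type identity \eqref{bochner-key} of Corollary~\ref{cor:grad} by means of the conservation law \eqref{pv2-hp1}. All the manipulations below amount to differentiation under the integral sign and integration by parts without boundary terms; on a general complete manifold these are licensed precisely by the standing assumption that the entropy, and hence each integrand that appears, is finite, together with Remark~\ref{regular}. Alternatively, one may first prove the identity on compact $M$ and then recover the general case by a cutoff argument.

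The whole computation rests on a single elementary cancellation. Since $u=-\log v^{p-1}$ we have $u_t\,v^{p-1}=-(v^{p-1})_t$, and therefore, because $\int_M v^{p-1}\,d\mu$ is conserved, $\int_M u_t\,v^{p-1}\,d\mu=-\frac{d}{dt}\int_M v^{p-1}\,d\mu=0$. Next I would compute $\mathcal{F}_p=\frac{d}{dt}\mathcal{N}_p$. Using $A(\nabla u)=(p-1)\nabla u$ together with \eqref{pv2-equ} one gets $\mathcal{L}(u)=(p-1)u_t-f^{p/2}$, so that $\left(\frac{\partial}{\partial t}-\mathcal{L}\right)u=(2-p)u_t+f^{p/2}$; applying \eqref{pv2-hp1} with $\psi=u$ and the cancellation above yields
\[
\frac{d}{dt}\int_M u\,v^{p-1}\,d\mu=\int_M\big((2-p)u_t+f^{p/2}\big)v^{p-1}\,d\mu=\int_M|\nabla u|^p\,v^{p-1}\,d\mu,
\]
hence $\mathcal{F}_p=\int_M|\nabla u|^p v^{p-1}\,d\mu-\frac{n}{pt}$.

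Now $\mathcal{W}_p=\frac{d}{dt}(t\mathcal{N}_p)=t\mathcal{F}_p+\mathcal{N}_p$, so $\frac{d}{dt}\mathcal{W}_p=2\mathcal{F}_p+t\frac{d}{dt}\mathcal{F}_p$, and everything comes down to differentiating $\int_M|\nabla u|^p v^{p-1}\,d\mu$. Writing $f^{p/2}=F_1-u_t$ and using the cancellation once more gives $\int_M|\nabla u|^p v^{p-1}\,d\mu=\int_M F_1\,v^{p-1}\,d\mu$, and then \eqref{pv2-hp1} with $\psi=F_1$ together with \eqref{bochner-key} gives
\[
\frac{d}{dt}\int_M|\nabla u|^p v^{p-1}\,d\mu=-p\int_M f^{p-2}\big(|\nabla\nabla u|_A^2+R_{ij}u_iu_j\big)v^{p-1}\,d\mu.
\]
Assembling the pieces and using $\int_M v^{p-1}\,d\mu=1$, I obtain
\[
\frac{d}{dt}\mathcal{W}_p=2\int_M f^{p/2}v^{p-1}\,d\mu-\frac{n}{pt}-tp\int_M f^{p-2}\big(|\nabla\nabla u|_A^2+R_{ij}u_iu_j\big)v^{p-1}\,d\mu.
\]

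It remains to recognize the right-hand side as the claimed perfect square. Since $(a_{ij})$ is the inverse of $(A^{ij})$, one checks the identities $|a|_A^2=n$ and $A^{ik}A^{jl}\big(f^{p/2-1}u_{ij}\big)a_{kl}=f^{p/2-1}\operatorname{tr}_A(\nabla\nabla u)$, and the last quantity equals $F_1$ by \eqref{pv2-equ}; expanding the square therefore gives
\[
\Big|f^{p/2-1}\nabla_i\nabla_j u-\frac{1}{tp}a_{ij}\Big|_A^2=f^{p-2}|\nabla\nabla u|_A^2-\frac{2}{tp}F_1+\frac{n}{t^2p^2}.
\]
Multiplying by $-tp$, integrating against $v^{p-1}\,d\mu$, using $\int_M v^{p-1}\,d\mu=1$ and $\int_M F_1\,v^{p-1}\,d\mu=\int_M f^{p/2}v^{p-1}\,d\mu$, and restoring the $R_{ij}u_iu_j$ term recovers exactly the expression for $\frac{d}{dt}\mathcal{W}_p$ derived above, which is \eqref{pv2-ef1}. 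I expect the only genuine obstacle to be bookkeeping: the algebra with the $A$-inner product and its inverse $a$ must be carried out carefully, and the differentiation under the integral together with the boundary-term-free version of \eqref{pv2-hp1} needs the finiteness hypothesis (or a cutoff argument) to be made rigorous on a noncompact manifold.
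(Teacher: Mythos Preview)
Your proof is correct and follows essentially the same route as the paper: both arguments use the conservation law \eqref{pv2-hp1} to compute $\frac{d}{dt}N(v,t)=\int_M F_1 v^{p-1}\,d\mu$ and then $\frac{d}{dt}\int_M F_1 v^{p-1}\,d\mu$ via \eqref{bochner-key}, invoke the identity $\frac{d\mathcal{W}_p}{dt}=2\mathcal{F}_p+t\frac{d\mathcal{F}_p}{dt}$, and complete the square. The only cosmetic differences are that the paper obtains $\int_M F_1 v^{p-1}\,d\mu=\int_M|\nabla u|^p v^{p-1}\,d\mu$ by integrating $F_1=\operatorname{div}(|\nabla u|^{p-2}\nabla u)$ by parts rather than via your cancellation $\int_M u_t v^{p-1}\,d\mu=0$, and that you spell out the $A$-norm algebra in the square-completion while the paper leaves it implicit.
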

When $p=2$, this recovers the entropy formula of
\cite{N1}.

For the proof we need the following result.
\begin{proposition}\label{pv2-energy}
\begin{eqnarray}\label{nash-mono}
\frac{d}{ dt} N(v, t)&=& \int_M F_1 v^{p-1}\, d\mu =\int_M
f^{p/2-1}\tr_A(\nabla \nabla u) v^{p-1}\,
d\mu,\\
\frac{d}{ dt} \int_M F_1 v^{p-1}\, d\mu &=& -p\int_M
f^{p-2}\left(|\nabla \nabla u|^2_A +R_{ij} u_i u_j\right) v^{p-1}\,
d\mu. \label{energy-mono}
\end{eqnarray}
\end{proposition}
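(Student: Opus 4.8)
The plan is to derive both identities from the conservation law \eqref{pv2-hp1} (which, by Remark \ref{regular}, applies to any sufficiently integrable test function, in particular to the ones below when $M$ is compact and $v$ smooth and positive) together with the pointwise evolution identity \eqref{bochner-key} of Corollary \ref{cor:grad}. Throughout I would use the relation $\nabla v = -\frac{v}{p-1}\nabla u$, equivalently $\nabla v^{p-1} = -v^{p-1}\nabla u$, and the fact that $A(\nabla u) = (p-1)\nabla u$, so that $\operatorname{div}(|\nabla u|^{p-2}\nabla u) = f^{p/2-1}\tr_A(\nabla\nabla u) = F_1$ (as already noted after \eqref{pv2-equ}) and hence $u_t = F_1 - f^{p/2}$ by \eqref{pv2-equ}.

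For \eqref{nash-mono} I would differentiate $N(v,t) = \int_M u\, v^{p-1}\, d\mu$ directly: $\frac{d}{dt}N = \int_M u_t v^{p-1}\, d\mu + \int_M u\,(v^{p-1})_t\, d\mu$. In the second integral, \eqref{pv2-eqv} and $\nabla v = -\frac{v}{p-1}\nabla u$ give $(v^{p-1})_t = -\operatorname{div}(v^{p-1}|\nabla u|^{p-2}\nabla u)$, and integrating by parts (using $\nabla v^{p-1} = -v^{p-1}\nabla u$) turns it into $\int_M v^{p-1}|\nabla u|^p\, d\mu$. In the first integral I substitute $u_t = F_1 - f^{p/2}$. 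Adding the two pieces, the $\pm\int_M f^{p/2}v^{p-1}\, d\mu$ terms cancel and only $\int_M F_1\, v^{p-1}\, d\mu$ remains, which is \eqref{nash-mono}; the second equality there is just the definition of $F_1$. (Equivalently one applies \eqref{pv2-hp1} to $\psi = u$, computes $(\partial_t - \mathcal{L})u = (2-p)F_1 + (p-1)f^{p/2}$ from $A(\nabla u) = (p-1)\nabla u$, and uses the same integration by parts, $\int_M F_1 v^{p-1}\, d\mu = \int_M f^{p/2}v^{p-1}\, d\mu$, to collapse the bracket.)

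For \eqref{energy-mono} I would apply \eqref{pv2-hp1} with $\psi = F_1$ to get $\frac{d}{dt}\int_M F_1\, v^{p-1}\, d\mu = \int_M\big((\partial_t - \mathcal{L})F_1\big)v^{p-1}\, d\mu$, and then substitute Corollary \ref{cor:grad} with $\alpha = 1$, namely $(\partial_t - \mathcal{L})F_1 = -pf^{p-2}(|\nabla\nabla u|^2_A + R_{ij}u_i u_j)$. This yields \eqref{energy-mono} immediately. Note that $F_1$ does depend on $t$, so it is important that \eqref{pv2-hp1} is stated with the $\psi_t$ term present.

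The only genuinely delicate points are bookkeeping: justifying the integrations by parts and the use of \eqref{pv2-hp1} for $\psi = F_1$, which involves second derivatives of $u$. On compact $M$ with $v$ smooth and positive these are automatic; in the noncompact case one works under the finiteness hypotheses already imposed on $\mathcal{W}_p$ and its time derivative, exactly as in the $p=2$ treatment of \cite{N1}, and any true degeneracy where $\nabla u = 0$ is handled separately by the $\epsilon$-regularization discussed above. In short, there is no substantial obstacle here: all the analytic content is already contained in the Bochner-type identity \eqref{bochner-key}, and Proposition \ref{pv2-energy} is its integrated shadow.
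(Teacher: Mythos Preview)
Your proposal is correct and essentially matches the paper's own argument: for \eqref{energy-mono} the paper likewise applies \eqref{pv2-hp1} with $\psi = F_1$ and invokes \eqref{bochner-key}, and for \eqref{nash-mono} it uses precisely your parenthetical alternative, computing $(\partial_t - \mathcal{L})u = (p-1)f^{p/2} - (p-2)\operatorname{div}(|\nabla u|^{p-2}\nabla u)$ and then collapsing via $\int_M F_1 v^{p-1}\,d\mu = \int_M f^{p/2} v^{p-1}\,d\mu$. Your direct-differentiation route to \eqref{nash-mono} is a harmless variant of the same computation.
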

\begin{proof} (of Proposition.) Direct calculation shows that
$$
\left(\frac{\partial}{\partial t} -\mathcal{L}\right) u =(p-1)
|\nabla u|^p -(p-2)\operatorname{div}(|\nabla u|^{p-2} \nabla u).
$$
Hence the first identity of Proposition \ref{pv2-energy} follows
from (\ref{pv2-hp1}) together with the observation
$$
\int_M F_1 v^{p-1}= \int_M \operatorname{div} (|\nabla
u|^{p-2}\nabla u) v^{p-1}=\int_M |\nabla u|^p v^{p-1}.
$$

The second identity of Proposition \ref{pv2-energy} follows from
(\ref{bochner-key}) and (\ref{pv2-hp1}).
\end{proof}
Note that in the case that the Ricci curvature of $M$ is nonnegative,
(\ref{energy-mono}) yields the monotonicity of `energy'
$$
\overline{\mathcal{F}}(v, t)\doteqdot \int_M |\nabla u|^p v^{p-1}.
$$
Now Theorem \ref{pv2-entropy} follows from the above proposition
similarly as in  \cite{N1} (also \cite{FIN}), by observing
that $ \frac{d \mathcal{W}_p}{ dt}=t\frac{d \mathcal{F}_p}{ dt}
+2\mathcal{F}_p$ and completing the square. This point of view is
taken from physics \cite{Ev, P}. The case $p=1$ (Theorem
\ref{efp=1}) can be shown similarly.

By ODE considerations and the Cauchy-Schwarz inequality, we also have
the following result.

\begin{corollary} Assume that $M$ has nonnegative Ricci curvature.
Then
\begin{equation}
\frac{d \mathcal{N}_p}{ dt}
=\mathcal{F}_p=\overline{\mathcal{F}}-\frac{n}{pt}\le 0.
\end{equation}
In particular, any positive ancient solution to (\ref{pv2-eqv}) must
be a constant.
\end{corollary}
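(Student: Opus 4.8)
The plan is to read both assertions directly off the monotonicity identities of Proposition~\ref{pv2-energy}, combined with a scalar ODE comparison in the spirit of the Li--Yau inequality.

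The equality $\frac{d\mathcal{N}_p}{dt}=\overline{\mathcal{F}}-\frac{n}{pt}$ takes essentially no work: since $\mathcal{N}_p(v,t)=N(v,t)-\frac{n}{p}\log t$, the first identity of Proposition~\ref{pv2-energy} gives $\frac{d\mathcal{N}_p}{dt}=\int_M F_1 v^{p-1}\,d\mu-\frac{n}{pt}$, and, as observed in the proof of that proposition, $\int_M F_1 v^{p-1}\,d\mu=\int_M|\nabla u|^p v^{p-1}\,d\mu=\overline{\mathcal{F}}$; by definition this last expression is $\mathcal{F}_p$.

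For the inequality I would normalize $\int_M v^{p-1}\,d\mu=1$ (harmless, since this integral is a constant of the flow and rescaling $v$ by a positive constant preserves (\ref{pv2-eqv})) and set $g(t)\doteqdot\overline{\mathcal{F}}(v,t)=\int_M F_1 v^{p-1}\,d\mu$, which is manifestly nonnegative. Using the second identity of Proposition~\ref{pv2-energy}, the hypothesis $\Ric\ge 0$, the pointwise bound $f^{p-2}|\nabla\nabla u|^2_A\ge\frac1n F_1^2$ (which follows from $n|T|^2_A\ge(\tr_A T)^2$ and the identity $F_1=f^{p/2-1}\tr_A(\nabla\nabla u)$), and the Cauchy--Schwarz inequality $\int_M F_1^2 v^{p-1}\,d\mu\ge\bigl(\int_M F_1 v^{p-1}\,d\mu\bigr)^2$, one obtains the differential inequality $g'(t)\le-\frac{p}{n}g(t)^2$. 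Since $g\ge 0$ it is nonincreasing, so if $g(t_1)>\frac{n}{pt_1}$ at some $t_1$ in the interval of existence, then $g>0$ on the whole interval up to $t_1$ and $1/g$ is smooth there with $(1/g)'\ge\frac{p}{n}$; integrating backwards from $t_1$ forces $1/g$ to vanish or become negative at a positive time $t_\ast<t_1$, which is absurd since $g$ is positive and finite there. Hence $g(t)\le\frac{n}{pt}$ and $\mathcal{F}_p=g(t)-\frac{n}{pt}\le 0$. (Equivalently, one may integrate the pointwise estimate $F_1\le\frac{n}{pt}$ of Theorem~\ref{thm:ly-ddnonl} against $v^{p-1}\,d\mu$.)

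Finally, let $v$ be a smooth positive ancient solution of (\ref{pv2-eqv}), defined for $t\in(-\infty,T)$. Running the argument above on the time interval $[s,T)$ for arbitrary $s<T$ gives $g(t)\le\frac{n}{p(t-s)}$ for $s<t<T$; letting $s\to-\infty$ with $t$ fixed yields $g(t)\le 0$. But $g(t)=\int_M|\nabla u|^p v^{p-1}\,d\mu\ge 0$, so $|\nabla u|\equiv 0$, i.e.\ $v(\cdot,t)$ is constant on $M$ for each $t$; then (\ref{pv2-eqv}) gives $\partial_t v^{p-1}=0$, so $v$ is a constant. The only step that genuinely needs care is the ODE comparison --- ruling out a finite-time singularity of $g$ on compact time-intervals and justifying the limit $s\to-\infty$ --- but this is routine, since $g$ is a finite smooth function of $t$ on the domain of the (smooth) solution $v$ on the compact manifold $M$.
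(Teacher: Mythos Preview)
Your proof is correct and follows precisely the approach the paper indicates: the paper's entire justification is the sentence ``By ODE considerations and the Cauchy--Schwarz inequality,'' and you have supplied exactly these ingredients --- Cauchy--Schwarz applied to $\int_M F_1^2 v^{p-1}\,d\mu$ to obtain $g'\le -\frac{p}{n}g^2$, followed by the standard ODE comparison. Your treatment of the ancient-solution claim via time-translation and $s\to-\infty$ is the natural argument and is what the paper has in mind.
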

 Summarizing we have that {\it if $M$ has nonnegative Ricci curvature then
 $\mathcal{N}_p(v, t)$ is a monotone non-increasing
 concave function in $\log t$.}

To deal with the potential vanishing of $\nabla u$, we work via the approximation scheme as before.
First, (\ref{nash-mono}) can be justified in view of Remark \ref{regular}. For (\ref{energy-mono}),
 we need to appeal to an approximation argument. Let $u_\e$ be a solution to (\ref{eq:pv2approx}) and
define $\varphi_\e$ similarly as before. We have the following pointwise computations.

\begin{proposition}\label{ptwise}
Let $u_\e$, $\varphi_\e$ and $f_\e$  be functions defined as before. Let
$$
V_\e=2\operatorname{div}(|\nabla u_\e|^{p-2}\nabla u_\e)-|\nabla u_\e|^p, \quad
W_\e=t(2\operatorname{div}(|\nabla \varphi_\e|^{p-2}\nabla
\varphi_\e)-|\nabla \varphi_\e|^p)+\varphi_\e -n.
$$
Then \begin{eqnarray} \ \ \ \left(\frac{\partial}{\partial
t}-\mathcal{L_\e}\right)V_\e &=& -pf_\e^{p-2}\left(|\nabla \nabla u_\e|^2_A
+\Ric(\nabla u_\e, \nabla u_\e)\right)\\
\left(\frac{\partial}{\partial t}-\mathcal{L}\right)W_\e
&=&-tp\left(\left|f_\e^{p/2-1}\nabla_i \nabla_j
u_\e-\frac{1}{tp}(a_\e)_{ij}\right|^2_{A_\e}
+\Ric(\nabla u_\e, \nabla u_\e)\right)\nonumber\\
&\quad&+(p-2)\left(|\nabla u_\e|^p-\operatorname{div}(|\nabla
u_\e|^{p-2}\nabla u_\e)\right).
\end{eqnarray}
\end{proposition}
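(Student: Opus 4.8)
The plan is to obtain both identities by reproducing, with $\mathcal{L}$, $f$, $A$, $a$ replaced throughout by $\mathcal{L}_\e$, $f_\e$, $A_\e$, $a_\e$, the same computations that produced Corollary \ref{cor:grad} and Theorem \ref{pv2-entropy} for smooth solutions; the only genuinely new point is the accounting of the terms proportional to $\e$ that appear because $f_\e = |\nabla u_\e|^2 + \e$ rather than $|\nabla u_\e|^2$. Everything else is already available: the equation \eqref{eq:pv2approx}; the identity $\left(\mathcal{L}_\e - \pdt\right)\pd{u_\e}{t} = 0$; the approximate Bochner formula \eqref{bochner-epsilon} together with its corollary $\left(\mathcal{L}_\e - \pdt\right)f_\e^{p/2} = pf_\e^{p-2}\left(\Ric(\nabla u_\e, \nabla u_\e) + |\nabla\nabla u_\e|_{A_\e}^2\right)$; and Lemma \ref{lem:approxgrad}.

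For the first identity I would start from the observation that, by \eqref{eq:pv2approx},
\[
F_{1,\e} = f_\e^{p/2} + \pd{u_\e}{t} = \operatorname{div}\!\left(f_\e^{p/2-1}\nabla u_\e\right),
\]
so that $V_\e = 2\operatorname{div}\!\left(f_\e^{p/2-1}\nabla u_\e\right) - f_\e^{p/2} = F_{1,\e} + \pd{u_\e}{t}$ (the difference from the form written in terms of $|\nabla u_\e|$ being of order $\e$ and irrelevant for the application). Applying $\pdt - \mathcal{L}_\e$ and invoking Lemma \ref{lem:approxgrad} with $\alpha = 1$ on the first summand and $\left(\mathcal{L}_\e - \pdt\right)\pd{u_\e}{t} = 0$ on the second gives the asserted formula for $\left(\pdt - \mathcal{L}_\e\right)V_\e$ immediately.

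For the second identity I would use that $\varphi_\e$ and $u_\e$ differ only by $-\frac{n}{p}\log t$ plus an additive constant, so $\nabla\varphi_\e = \nabla u_\e$ and hence $W_\e = tV_\e + u_\e - \frac{n}{p}\log t + \mathrm{const}$. Differentiating,
\[
\left(\pdt - \mathcal{L}_\e\right)W_\e = V_\e + t\left(\pdt - \mathcal{L}_\e\right)V_\e + \left(\pdt - \mathcal{L}_\e\right)u_\e - \frac{n}{pt},
\]
into which I substitute the first identity. A direct computation from \eqref{eq:pv2approx} — the $\e$-analogue of the opening line of the proof of Proposition \ref{pv2-energy} — evaluates $\left(\pdt - \mathcal{L}_\e\right)u_\e$ explicitly, and one then checks the bookkeeping identity $V_\e + \left(\pdt - \mathcal{L}_\e\right)u_\e = 2F_{1,\e} + (p-2)\left(f_\e^{p/2} - \operatorname{div}(f_\e^{p/2-1}\nabla u_\e)\right)$. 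It remains only to complete the square: since $a_\e$ is the inverse of $A_\e$ one has $|a_\e|_{A_\e}^2 = n$ and $f_\e^{p/2-1}\langle\nabla\nabla u_\e, a_\e\rangle_{A_\e} = f_\e^{p/2-1}\tr_{A_\e}(\nabla\nabla u_\e) = F_{1,\e}$, whence
\[
tpf_\e^{p-2}|\nabla\nabla u_\e|_{A_\e}^2 - 2F_{1,\e} + \frac{n}{pt} = tp\left|f_\e^{p/2-1}\nabla_i\nabla_j u_\e - \frac{1}{tp}(a_\e)_{ij}\right|_{A_\e}^2,
\]
and assembling the pieces yields the stated expression for $\left(\pdt - \mathcal{L}_\e\right)W_\e$. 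The one delicate point, and the main thing to be careful about, is exactly the tracking of these $\e$-corrections: the Bochner identity \eqref{bochner-epsilon} and the formula for $\left(\pdt - \mathcal{L}_\e\right)u_\e$ both carry terms proportional to $\e$ (arising whenever $A_\e\nabla u_\e$ or $\tr_{A_\e}$ is expanded via $|\nabla u_\e|^2 = f_\e - \e$), and one must check either that these cancel in the combinations above or — which is all that the application requires — that they vanish uniformly on compact subsets as $\e \to 0$, so that the limit recovers \eqref{energy-mono} and the entropy formula.
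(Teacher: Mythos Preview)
Your approach is correct and is exactly the route the paper implicitly intends: the proposition is stated without proof as a ``pointwise computation,'' and the derivation is meant to be the $\e$-analogue of Corollary \ref{cor:grad} together with the completing-the-square argument behind Theorem \ref{pv2-entropy}, which is precisely what you carry out. Your identification $V_\e = F_{1,\e} + \partial_t u_\e$ (equivalently $F_{2,\e}$), the use of Lemma \ref{lem:approxgrad} and $(\mathcal{L}_\e - \partial_t)\partial_t u_\e = 0$ for the first identity, and the decomposition $W_\e = tV_\e + u_\e - \tfrac{n}{p}\log t + \mathrm{const}$ followed by the square-completion for the second, are all sound; your caveat about the $\e$-corrections arising from $|\nabla u_\e|^2 = f_\e - \e$ is also well placed, since the paper's statement (written with $|\nabla u_\e|$ rather than $f_\e^{1/2}$, and with the Ricci term missing its $f_\e^{p-2}$ factor) is evidently informal and only meant to hold up to such terms.
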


Notice that
$$
\int_M V_\e v^{p-1}=2\int_M \langle \nabla u_\e, \nabla u\rangle |\nabla u_\e|^{p-1}v^{p-1} -|\nabla u_\e|^p v^{p-1}\to \int_M |\nabla u|^p v^{p-1}=\overline{\mathcal{F}}
$$
and
\begin{eqnarray*}
\frac{d}{dt}\int_M V_\e v^{p-1}&=& -p\int_M f_\e^{p-2
}\left(|\nabla \nabla u_\e|^2_{A_\e}+\Ric(\nabla u_\e, \nabla u_\e)\right)v^{p-1}\\
&\,& -\int_M f_\e^{p/2-1}\left(\langle \nabla V_\e, \nabla v^{p-1}\rangle +(p-2)\frac{\langle \nabla u_\e, \nabla V_\e\rangle}{f_\e}\langle \nabla u_\e, \nabla v^{p-1}\rangle\right)\\
&\, & -p\int_M f^{p/2-1}_\e \langle \nabla u_\e, \nabla V_\e\rangle v^{p-1}+\int_M f^{p/2-1}v^{p-1}\langle \nabla u, \nabla V_\e\rangle\\
&\to& -p\int_M f^{p-2}\left(|\nabla \nabla u|^2_{A}+\Ric(\nabla u,
\nabla u)\right)v^{p-1}
\end{eqnarray*}
as $\epsilon \to 0$. Hence (\ref{energy-mono}) can be established
for weak solutions which can be properly approximated by the
regularization process.

Strictly speaking, the above proof can be applied only for the case
when $M$ is compact. When $M$ is non-compact, further work is needed
to justify the integrations by parts involved. On complete noncompact
manifolds with nonnegative Ricci curvature (or for which the Ricci curvature
is bounded from below) the entropy formula has been established
rigorously in the forthcoming book \cite{chowetc1} for the case
$p=2$, with full justification for such manipulations. The argument there
can be adapted to the $p>1$ case. Since this is rather technical we
refer the readers to \cite{chowetc1} for details.

In \cite{N1}, the entropy formula for the heat equation was used to
show that  a manifold with nonnegative Ricci curvature and sharp
logarithmic Sobolev inequality must be isometric to $\R^n$. It is
interesting to study the relation between the $L^p$-logarithmic
Sobolev inequality and the entropy formula (\ref{pv2-ef1}). First
recall the sharp $L^p$-logarithmic Sobolev inequality (see for
example \cite{DpDG}).

\begin{theorem}\label{lp-sob} Let $M$ be Euclidean space $\R^n$.
Then for any $p>1$, and any $w$ with $\int |w|^p dx =1$,
\begin{equation}\label{log-sob1}
\int |w|^p \log |w|^p\, dx \le \frac{n}{p}\log \left(\mathcal{C}_{p,
n}\int |\nabla w|^p\, dx\right)
\end{equation}
where
$$
\mathcal{C}_{p,
n}=\frac{p}{n}\left(\frac{p-1}{e}\right)^{p-1}\pi^{-p/2}\left(\frac{\Gamma(n/2+1)}
{\Gamma(n/p^*+1)}\right)^{p/n}.
$$
\end{theorem}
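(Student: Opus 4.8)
The plan is to derive \eqref{log-sob1} from the entropy monotonicity of Theorem \ref{pv2-entropy}, specialized to $M=\R^n$, where $R_{ij}\equiv 0$. The first step is purely algebraic. If $|w|^{p}=v^{p-1}$ with $\int_{\R^n}v^{p-1}\,dx=1$, then, using the relation between $\varphi$ and $v^{p-1}$ in the definition of $\mathcal{W}_p$ to eliminate $\varphi$, a direct computation gives
\[
\mathcal{W}_p(v,t)=t\,p^{p}\int_{\R^n}|\nabla w|^{p}\,dx-\int_{\R^n}|w|^{p}\log|w|^{p}\,dx-\frac{n}{p}\log t-\kappa_{p,n},
\]
where $\kappa_{p,n}=n+\log\!\big(\pi^{n/2}({p^*}^{p-1}p)^{n/p}\Gamma(n/p^*+1)/\Gamma(n/2+1)\big)$. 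Minimizing the resulting lower bound for $\int|w|^{p}\log|w|^{p}\,dx$ over $t>0$ (a one-variable calculus exercise, with minimum at $t_\ast=n/(p^{p+1}\int|\nabla w|^{p}\,dx)$) reproduces \eqref{log-sob1} with exactly the constant $\mathcal{C}_{p,n}$; conversely, \eqref{log-sob1} is equivalent to the assertion that $\mathcal{W}_p(v,t)\ge 0$ for every admissible $v$ and every $t>0$. Hence it suffices to prove $\mathcal{W}_p(v,t)\ge 0$.

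By approximation (standard for inequalities of this kind) it is enough to treat $v$ with $v^{p-1}\in C^\infty_c(\R^n)$, positive on the interior of its support, and $\mathcal{W}_p(v,t)$ finite. For such data, \eqref{pv2-eqv} admits a solution $v(\cdot,s)$, $s\ge t$, with $v(\cdot,t)=v$, which is smooth where $|\nabla v|\ne 0$ and which, together with the relevant derivatives, decays fast enough at spatial infinity for the integrations by parts in the proof of Theorem \ref{pv2-entropy} to be legitimate — the requisite estimates being those developed in \cite{chowetc1} for $p=2$ and adapted to $p>1$ as indicated earlier in this section. Since $R_{ij}\equiv 0$, Theorem \ref{pv2-entropy} gives $\frac{d}{ds}\mathcal{W}_p(v(\cdot,s),s)\le 0$, whence
\[
\mathcal{W}_p(v,t)=\mathcal{W}_p(v(\cdot,t),t)\;\ge\;\lim_{s\to\infty}\mathcal{W}_p(v(\cdot,s),s),
\]
and the problem reduces to showing that this limit is nonnegative — in fact, that it equals $0$.

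To evaluate the limit I would invoke the intermediate asymptotics for \eqref{pv2-eqv} (cf.\ \cite{V1}): after rescaling into the self-similar variables adapted to \eqref{ffs}, $v(\cdot,s)$ converges to the source solution $v_0(\cdot,s)$ of \eqref{ffs}. On $v_0$ the integrand in the entropy formula of Theorem \ref{pv2-entropy} vanishes identically — one checks directly that $f^{p/2-1}\nabla_i\nabla_j u=\frac{1}{sp}a_{ij}$ there, which is precisely the equality case of \eqref{lyp1} noted in the Remark following Theorem \ref{thm:ly-ddnonl} — so $s\mapsto\mathcal{W}_p(v_0(\cdot,s),s)$ is constant in $s$; and the $\pi$- and $\Gamma$-factors entering the definition of $\mathcal{W}_p$ (which are exactly those appearing in \eqref{ffs}) are chosen so that this constant is $0$, as one verifies by a direct integration against $v_0^{p-1}$. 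Combining the convergence with equi-integrability of the density $(s|\nabla\varphi|^{p}+\varphi-n)v^{p-1}$ then gives $\lim_{s\to\infty}\mathcal{W}_p(v(\cdot,s),s)=0$, so $\mathcal{W}_p(v,t)\ge 0$ and \eqref{log-sob1} follows.

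The step I expect to be the main obstacle is this last one: proving convergence of the flow to the source solution together with enough uniform control — a propagated second-moment (tail) bound, plus equi-integrability of $\varphi\,v^{p-1}$ — to pass to the limit inside $\mathcal{W}_p$. The monotonicity of $\mathcal{W}_p$ and the lower boundedness of $\mathcal{N}_p$ (cf.\ the Corollary following Proposition \ref{pv2-energy}) are helpful, but carrying this out rigorously on the noncompact space $\R^n$ for the degenerate equation \eqref{pv2-eqv} demands the care exercised in \cite{chowetc1}. Alternatively, \eqref{log-sob1} with the sharp constant $\mathcal{C}_{p,n}$ is the known optimal $L^p$-logarithmic Sobolev inequality and may simply be quoted from \cite{DpDG} (see also \cite{DD,G}); the flow argument above is the route natural to the methods of this paper.
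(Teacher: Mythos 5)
The paper does not prove Theorem \ref{lp-sob} at all: it is recalled as a known result (``see for example \cite{DpDG}''), and immediately after the statement the authors say the inequality ``was established after the work of \cite{DD, G} (see also \cite{E}).'' Your flow argument is therefore a genuinely different route. Its ingredients are consistent with the paper: the algebraic equivalence between \eqref{log-sob1} and $\mathcal{W}_p(v,t)\ge 0$ (minimizing over $t$ with $t_\ast = n/(p^{p+1}\int|\nabla w|^p\,dx)$) is exactly what Proposition \ref{lsobolev2} records in one direction, and your computation that $\mathcal{W}_p(v_0(\cdot,t),t)\equiv 0$ for the source solution \eqref{ffs} — via $t|\nabla\varphi|^p+\varphi = p^*\varphi$ and $\int\varphi\,v_0^{p-1}=n/p^*$ — is correct and is indeed why the normalizing constant in the definition of $\mathcal{W}_p$ is chosen as it is. What your argument buys is a self-contained, entropy-theoretic derivation of the sharp constant, in the spirit of Perelman-type monotonicity. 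What it costs is precisely what you flag as the obstacle: you need the solution of the degenerate equation \eqref{pv2-eqv} starting from arbitrary admissible data to converge in self-similar variables to $v_0$, together with uniform tail/equi-integrability control for $(s|\nabla\varphi|^p+\varphi-n)v^{p-1}$ so that $\mathcal{W}_p$ passes to the limit. That is a nontrivial intermediate-asymptotics theorem and is not contained in the paper (which in fact cautions, in the Remark after Theorem 6.13, that even the existence, regularity, and finiteness hypotheses needed to run the entropy formula for the fundamental solution ``require justification''); without it your monotonicity argument only shows $\mathcal{W}_p(v,t)\ge\liminf_{s\to\infty}\mathcal{W}_p(v(\cdot,s),s)$ with no lower bound on the right. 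So the proposal is a plausible but unfinished alternative proof; the paper itself simply quotes the sharp $L^p$-log-Sobolev inequality and uses it, plus the entropy formula, only in the rigidity Theorem 6.13.
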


The above inequality with sharp constant was established after the
work of \cite{DD, G} (see also \cite{E}), and has the following
form connected with the entropy quantity. (For the  $L^1$-version,
see \cite{Bo} as well as \cite{Be}.)

\begin{proposition}\label{lsobolev2}
If on $(M, g)$, (\ref{log-sob1}) holds then
$$
\mathcal{W}_p(\varphi, t)\ge 0
$$
for
$$v^{p-1}=\frac{1}{\pi^{n/2}({p^*}^{p-1}p)^{\frac{n}{p}}}\frac{\Gamma(n/2+1)}{\Gamma(n/p^*+1)}
\frac{e^{-\varphi}}{t^{\frac{n}{p}}}, $$ with $\int_M v^{p-1}=1$,
where $t$ is just a scaling factor.
\end{proposition}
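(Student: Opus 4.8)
The plan is to test the sharp $L^p$-logarithmic Sobolev inequality \eqref{log-sob1} against the function $w \doteqdot v^{(p-1)/p}$ and then to optimize the resulting inequality in the scaling parameter $t$. Since $\int_M |w|^p\, d\mu = \int_M v^{p-1}\, d\mu = 1$, the function $w$ is an admissible competitor. First I would record two identities that translate the entropy $\mathcal{W}_p$ into the quantities appearing in \eqref{log-sob1}. Writing $v^{p-1} = C_0\, t^{-n/p} e^{-\varphi}$ with $C_0 = \pi^{-n/2}({p^*}^{p-1}p)^{-n/p}\,\Gamma(n/2+1)/\Gamma(n/p^*+1)$, one has $\log |w|^p = \log v^{p-1} = \log C_0 - \frac{n}{p}\log t - \varphi$, so that
\[
\int_M |w|^p\log|w|^p\, d\mu = \log C_0 - \frac{n}{p}\log t - \int_M \varphi\, v^{p-1}\, d\mu .
\]
Differentiating $\log v^{p-1}$ gives $\nabla v = -\frac{v}{p-1}\nabla\varphi$, hence $\nabla w = -\frac{w}{p}\nabla\varphi$, and therefore $\int_M |\nabla w|^p\, d\mu = p^{-p}\int_M |\nabla\varphi|^p\, v^{p-1}\, d\mu$.

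Next I would insert these into \eqref{log-sob1}. Setting $I \doteqdot \int_M |\nabla\varphi|^p\, v^{p-1}\, d\mu$ --- we may assume $0 < I < \infty$, since if $I = \infty$ the claim is trivial and if $\varphi$ is constant one has $\mathcal{W}_p = 0$ directly --- the inequality \eqref{log-sob1} reads
\[
\log C_0 - \frac{n}{p}\log t - \int_M \varphi\, v^{p-1}\, d\mu \;\le\; \frac{n}{p}\log\!\bigl(\mathcal{C}_{p,n}\, p^{-p}\, I\bigr),
\]
that is, $\int_M \varphi\, v^{p-1}\, d\mu \ge \log C_0 - \frac{n}{p}\log t - \frac{n}{p}\log(\mathcal{C}_{p,n}p^{-p}I)$. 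Substituting this lower bound into $\mathcal{W}_p(\varphi, t) = t I + \int_M \varphi\, v^{p-1}\, d\mu - n$ gives $\mathcal{W}_p(\varphi, t) \ge g(I)$, where
\[
g(s) \doteqdot t s - \frac{n}{p}\log s + \Bigl(\log C_0 - \frac{n}{p}\log t - \frac{n}{p}\log(\mathcal{C}_{p,n}p^{-p}) - n\Bigr)
\]
is strictly convex on $(0,\infty)$ with unique minimum at $s_* = n/(pt)$.

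It remains to verify that $g(s_*) = 0$, and this constant bookkeeping is the only delicate point. One computes
\[
g(s_*) = \frac{n}{p} - \frac{n}{p}\log\frac{n}{pt} + \log C_0 - \frac{n}{p}\log t - \frac{n}{p}\log(\mathcal{C}_{p,n}p^{-p}) - n,
\]
and then, using ${p^*}^{p-1}p = p^p/(p-1)^{p-1}$ together with the explicit form of $\mathcal{C}_{p,n}$, one checks that all contributions involving $\log t$, $\log\pi$, $\log\Gamma$, $\log p$, $\log n$ and $\log(p-1)$ cancel, and the remaining constant is $\frac{n}{p} - n + \frac{n(p-1)}{p} = 0$; in particular the $\log t$ terms drop out, which is precisely why $t$ enters only as a scaling factor. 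Hence $\mathcal{W}_p(\varphi, t) \ge g(I) \ge g(s_*) = 0$, as claimed. The main obstacle is thus not conceptual but arithmetic: one must confirm that the prescribed normalization of $v^{p-1}$ is exactly the one for which the sharp constant $\mathcal{C}_{p,n}$ produces equality. Everything else uses only the pointwise identities above and convexity, so no curvature or completeness hypothesis on $M$ enters beyond what is needed to make sense of the integrals.
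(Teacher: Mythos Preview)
Your proof is correct and follows precisely the approach the paper alludes to (it merely refers to the $p=2$ computation in \cite{Chowetc} and leaves the details to the reader): apply \eqref{log-sob1} to $w=v^{(p-1)/p}$, rewrite both sides via $\nabla\varphi=\nabla u$ and the normalization of $v^{p-1}$, and then minimize over the resulting one-variable function of $I=\int|\nabla\varphi|^p v^{p-1}$. Your verification that the minimum $g(s_*)$ vanishes is the ``technically more complicated'' bookkeeping the paper declines to carry out, and it checks.
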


\begin{proof} The proof is a technically more complicated version of the  $p=2$ case shown in
\cite{Chowetc}, pages 247--249. We leave the detailed checking to
the interested reader.
\end{proof}

We shall next prove the following characterization of $\R^n$
among manifolds with nonnegative Ricci curvature by the sharp
$L^p$-logarithmic Sobolev inequality.

\begin{theorem} Let $(M, g)$ be a complete, connected Riemannian manifold with
nonnegative Ricci curvature. Assume that (\ref{log-sob1}) holds for
some $p>1$ with the sharp constant $\mathcal{C}_{p,n}$ on $M$. Then
$M$ is isometric to $\R^n$.
\end{theorem}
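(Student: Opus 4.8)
The plan is to run the Perelman-type argument of \cite{N1} for the nonlinear entropy $\mathcal{W}_p$ of Theorem \ref{pv2-entropy}. By Proposition \ref{lsobolev2}, the hypothesis that \eqref{log-sob1} holds on $M$ with the sharp constant $\mathcal{C}_{p,n}$ yields $\mathcal{W}_p(v,t)\ge 0$ for every $t>0$ and every positive solution $v$ of \eqref{pv2-eqv} with $\int_M v^{p-1}\,d\mu=1$ and finite entropy. On the other hand, Theorem \ref{pv2-entropy} together with $R_{ij}\ge 0$ shows that $\frac{d}{dt}\mathcal{W}_p(v,t)\le 0$, since both the $|\cdot|_A^2$ term and the term $f^{p-2}R_{ij}u_iu_j$ in \eqref{pv2-ef1} are nonnegative where $\nabla u\neq 0$; hence $t\mapsto\mathcal{W}_p(v,t)$ is nonincreasing. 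It therefore suffices to produce one such solution $v$ with $\limsup_{t\to 0^+}\mathcal{W}_p(v,t)\le 0$: monotonicity then gives $\mathcal{W}_p(v,t)\le 0$ for every $t>0$, and together with $\mathcal{W}_p\ge 0$ this forces $\mathcal{W}_p(v,\cdot)\equiv 0$, and consequently $\frac{d}{dt}\mathcal{W}_p(v,\cdot)\equiv 0$.

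For $v$ I would take the fundamental solution of \eqref{pv2-eqv} with pole at a fixed point $o\in M$; since $\int_M v^{p-1}\,d\mu$ is conserved along \eqref{pv2-eqv}, the normalization $\int_M v^{p-1}\,d\mu=1$ is automatic, and existence, uniqueness and regularity follow from the theory of the $p$-Laplacian heat equation (cf. \cite{V1} and the references therein), constructing $v$ by compact exhaustion as in the proof of Theorem \ref{exist}. To control $\mathcal{W}_p(v,t)$ as $t\to 0^+$ I would rescale parabolically at $o$: in geodesic normal coordinates the metric is Euclidean to leading order, and the rescaled solutions converge to the explicit Barenblatt profile \eqref{ffs} on $\R^n$, for which a direct computation -- using $R_{ij}\equiv 0$ together with the exact identity $f^{p/2-1}\nabla_i\nabla_j u=\frac{1}{tp}a_{ij}$ -- gives $\mathcal{W}_p\equiv 0$. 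The curvature corrections to the integrand of $\mathcal{W}_p$ are $o(1)$ as $t\to 0^+$, and $R_{ij}\ge 0$, via Bishop--Gromov volume comparison, prevents any additional mass from surviving in the limit; this gives $\limsup_{t\to 0^+}\mathcal{W}_p(v,t)\le 0$, and hence $\mathcal{W}_p(v,\cdot)\equiv 0$.

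For the rigidity, note that $\frac{d}{dt}\mathcal{W}_p(v,\cdot)\equiv 0$ means the right-hand side of \eqref{pv2-ef1} vanishes for every $t>0$; since $v>0$ and the integrand is a sum of nonnegative terms, we obtain, wherever $\nabla u\neq 0$,
\[
R_{ij}u_iu_j=0,\qquad f^{p/2-1}\nabla_i\nabla_j u=\frac{1}{tp}\,a_{ij},\qquad a_{ij}=g_{ij}-\frac{p-2}{p-1}\frac{u_iu_j}{f},
\]
the degeneracy locus $\{\nabla u=0\}$ being handled via the $\epsilon$-regularization \eqref{eq:pv2approx}, exactly as in the proof of \eqref{energy-mono}. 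In an orthonormal frame $e_1=\nabla u/|\nabla u|,e_2,\dots,e_n$ one has $a_{11}=\frac{1}{p-1}$, $a_{1\alpha}=0$ and $a_{\alpha\beta}=\delta_{\alpha\beta}$ for $\alpha,\beta\ge 2$, so the Hessian identity forces $u_{1\alpha}=0$; this gives $\nabla_{e_1}e_1=0$ (the integral curves of $\nabla u$ are geodesics after reparametrization), $|\nabla u|$ constant on the level sets of $u$, and these level sets totally umbilic. Hence near $o$ one has $g=dr^2+\psi(r)^2\,g_{\Sph^{n-1}}$ and $u=u(r)$ with $r=d(\cdot,o)$, and the Hessian identity reduces to $|u'|^{p-2}u''=\frac{1}{tp(p-1)}$ and $u'\,\frac{\psi'}{\psi}=\frac{1}{tp}|u'|^{2-p}$. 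Integrating the first gives $|u'|^p=\frac{u-u(0)}{t(p-1)}$; feeding this into the second, an elementary integration yields $\psi(r)=c\,r$, and smoothness of the metric at the pole ($\psi'(0)=1$) forces $c=1$. Thus $g=dr^2+r^2 g_{\Sph^{n-1}}$ is flat; carrying this out for all $t>0$, so that the positivity sets $\{v(\cdot,t)>0\}$ exhaust $M$, and invoking completeness, $M$ is isometric to $\R^n$. (The identity $R_{ij}u_iu_j=0$ is then automatically consistent, being the radial Ricci curvature.)

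The main obstacle is the small-time analysis of the fundamental solution in the second paragraph: one must establish existence, uniqueness, and sharp concentration and convergence estimates for the degenerate (or, for $p<2$, singular) equation \eqref{pv2-eqv} on a Riemannian manifold, with enough uniform control to pass the entropy functional $\mathcal{W}_p$ to the Euclidean Barenblatt limit -- this is considerably more delicate than the linear heat-kernel asymptotics underlying the $p=2$ case in \cite{N1}. A secondary and more routine point, already anticipated in the discussion following Proposition \ref{ptwise}, is the rigorous justification of the integrations by parts and of the pointwise equality statement in the noncompact setting; as there, one argues through the $\epsilon$-regularization \eqref{eq:pv2approx} and, where needed, along the lines of \cite{chowetc1}.
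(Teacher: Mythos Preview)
Your overall strategy matches the paper's: use Proposition \ref{lsobolev2} for the lower bound $\mathcal{W}_p\ge 0$, the entropy formula \eqref{pv2-ef1} for monotonicity, the fundamental solution to get $\mathcal{W}_p\to 0$ as $t\to 0^+$, and then read off rigidity from the vanishing of the integrand. Your identification of the small-time fundamental-solution analysis as the principal technical issue is also in line with the paper, which explicitly assumes this in a remark following the proof.

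The genuine gap is in your rigidity step. From $f^{p/2-1}u_{ij}=\frac{1}{tp}a_{ij}$ you correctly extract $u_{1\alpha}=0$, constancy of $|\nabla u|$ on level sets, and total umbilicity; but these yield only a warped product
\[
g=ds^{2}+\psi(s)^{2}\,g_{N}
\]
over an \emph{a priori unknown} $(n-1)$-manifold $(N,g_{N})$ --- not the rotationally symmetric form $dr^{2}+\psi(r)^{2}g_{\Sph^{n-1}}$ with $r=d(\cdot,o)$ that you write down. Nothing so far tells you that the level sets are geodesic spheres about a point, or even that $N$ is a sphere; your subsequent ODE integration and the appeal to ``$\psi'(0)=1$'' presuppose exactly this. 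The paper supplies the missing argument: having solved the ODE to obtain the \emph{linear} warping factor $\psi(s)=s+pB$, it uses the Gauss equations and $\operatorname{Ric}_{M}\ge 0$ to get $\operatorname{Ric}_{N}\ge (n-1)g_{N}$, hence by Myers' theorem $N$ is compact with $\operatorname{diam}(N_{s})\le \pi(s+pB)\to 0$ as $s\to -pB$. Thus the level sets collapse to a single point $\mathcal{O}\in\Sigma$; a short connectedness argument (removing a point cannot disconnect $M$ when $n\ge 2$) then shows $M=M_{1}\cup\{\mathcal{O}\}$, so the $N_{s}$ are geodesic spheres about $\mathcal{O}$ and the warped product is Euclidean. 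You also skip the paper's observation that the Hessian identity forces $\nabla\nabla u\ge 0$ globally, which is what pins down the critical set $\Sigma$ as the (convex) minimum set of $u$ and underlies this connectedness step.
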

\begin{proof} By an argument along the lines of \cite{P}, Section 3,  one can show
that for the fundamental solution $H$ to (\ref{pv2-eqv}),
$\lim_{t\to 0} \mathcal{W}_p(H, t)=0$. Then, the entropy formula ensures
that $\mathcal{W}_p(H, t)\le 0$. However, in the presence of a sharp
$L^p$-logarithmic Sobolev
inequality,  one may prove, as in Corollary \ref{lsobolev2}, that
$\mathcal{W}_p(H, t)\ge 0$ for $t>0$. Hence we can
conclude that $\frac{d}{dt} \mathcal{W}_p (H, t)\equiv 0$. Writing
$u=-(p-1)\log H$, and $f=|\nabla u|^2$ we have that
\begin{equation}\label{rid1}
u_{ij} =\frac{1}{tp f^{p/2-1}}\left(g_{ij}-\frac{p-2}{p-1}\frac{u_i
u_j}{|\nabla u|^2}\right)
\end{equation}
off of the set of critical points of $u$, which we shall denote by $\Sigma$.

Equation \eqref{rid1} implies in particular that
$\nabla\nabla u > 0$ on $M\setminus \Sigma$, so $\nabla\nabla u \geq 0$ on all of $M$.
It follows, then, that $p\in \Sigma$ if and only if $u(p) = \inf_{q\in M}u(q)\doteqdot m$.
We shall prove later that $\Sigma$ consists of
precisely one point; for the present we note that these facts imply
at least that $\Sigma$ is a connected (in fact, convex) subset of $M$.
Indeed, if $p_1$, $p_2\in \Sigma$, and $\gamma:[0,l]\to M^n$ is a unit-speed geodesic with
$\gamma(0) = p_1$ and $\gamma(l) = p_2$, then we have
\[
        \frac{d^2}{ds^2}u(\gamma(s)) = \left(\nabla_{\dot{\gamma}}
        \nabla_{\dot{\gamma}}u\right)(\gamma(s)).
\]
Since $u(\gamma(s))\geq u(\gamma(0))=u(\gamma(l)) = m$, the weak convexity of $u$ implies $u(\gamma(s)) \equiv m$ and so
$\gamma([0, l])\subset\Sigma$.

Let us now consider the structure of an arbitrary connected component $M_1$
of $M\setminus\Sigma$.  We may write $M_1\approx I \times N$ for an interval $I$
and a hypersurface $N$ defined by
\[
    N = M_1\cap \{u = u(p_0)\}
\]
for some arbitrary $p_0\in M_1$.
Now we choose
a orthonormal frame such that $e_1=\frac{\nabla u}{|\nabla u|}, e_2,
\cdots, e_n$. Then the equality (\ref{rid1}) implies that
\begin{eqnarray*}
u_{11}&=&\frac{1}{tp(p-1)u_1^{p-2}},\\
u_{1\beta}&=&0,\\
u_{\alpha \alpha}&=&\frac{1}{tp u_1^{p-2}}
\end{eqnarray*}
for $2\le \alpha, \beta\le n$. This shows that $|\nabla u|$ is
constant along the level set of $u$. We shall use the above information to
find the explicit form of the metric. Recall that $u_{\alpha
\beta} =h_{\alpha \beta} u_1$, where $h_{\alpha \beta}$ is the
second fundamental form of the level set hypersurface of $u$. Let $U=p|\nabla u|^{p-1}$. It is easy to see that the mean curvature of
this hypersurface is $H=\frac{n-1}{U}.$ Hence
$$
U_1 =1, \quad \Delta U= U_{11} +H U_1=\frac{n-1}{U}.
$$
If we fix a level set and parametrize the other ones using $s$, the
oriented  distance from this fixed one, we have that
$$
u_{ss}=\frac{1}{p(p-1) u_s^{p-2}}
$$
which implies that $(u_s^{p-1})_s=\frac{1}{p}$. Thus
\begin{equation}\label{eq:uder1}
        u_1=\left(\frac{s}{p}+B\right)^{\frac{1}{p-1}},
\end{equation}
where $B$ is the
value of $|\nabla u|^{p-1}$ on the originally fixed hypersurface.
This implies that the level hypersurfaces at distance $s$ have the
second fundamental form
$$
h_{\alpha \beta} =\frac{1}{s+pA} g_{\alpha \beta}.
$$
This further implies that the metric on $N \times I$ is of the form
\begin{equation}\label{eq:wp}
g_{ij}=ds^2 +(s+pB)^2 g_{N}
\end{equation}
where $g_N$ is the metric of the hypersurface $N$.
Since $M$ is complete, and $M_1$ a connected component of $M\setminus\Sigma$,
we see that $I = (-pB, \infty)$.

Now, from equation \eqref{eq:wp}, one may compute that
$K_{M}(X, \nabla u,  X, \nabla u) = 0$ along
$N$ for any $X$ tangential to $N$ and hence (either again from \eqref{eq:wp}
or from the Gauss equations)
that
\begin{equation}\label{eq:rc}
   \operatorname{Rc}_N(X, X) = \operatorname{Rc}_{M}(X, X) + (n-1)g_N(X, X).
\end{equation}
Since the Ricci curvature of $M$ is assumed to be non-negative,
it follows immediately from Myer's theorem that $N$ is compact and, moreover, that
the diameter of the hypersurface $N_s$ of signed distance $s\in I$ from $N$
satisfies the bound
\begin{equation}\label{eq:diam}
        \operatorname{diam}(N_s)\leq \pi (s + pB).
\end{equation}
Thus $\lim_{s\to -pB} \operatorname{diam}(N_s) = 0$ and $\partial M_1$ must consist of a single point $\mathcal{O}\in \Sigma$.  In fact, we must have $M = M_1 \cup \{\mathcal{O}\}$ as
otherwise the removal of $\mathcal{O}$ would disconnect $M$ -- an impossibility
if $\operatorname{dim}(M) \geq 2$.  Thus $M = M_1 \cup \{\mathcal{O}\}$ in this case.  (Note that since
the foregoing considerations imply that $M$ must be non-compact, the theorem has already been established in the case $\operatorname{dim}(M) = 1$.) So $N$ (and hence all $N_s$) are
geodesic spheres about the point $\mathcal{O}$, and the representation \eqref{eq:wp}
on $M\setminus \{\mathcal{O}\}$ implies that $M$ is isometric to $\mathbb{R}^n$.
\end{proof}

\begin{remark} Here we assumed the existence and regularity of the fundamental solution, together with the finiteness of all the integrals involved in the entropy. These assumptions certainly require justification.
\end{remark}

\section{Localization}
In this section, carrying on in the notation of the last,
 we localize the estimate (\ref{lyp1}) of Theorem
\ref{thm:ly-ddnonl}. Here, in considering
the parabolic equation (\ref{pv2-equ}) we encounter some technical
complications beyond those encountered in Section 2 for the derivation of the local estimate
for the elliptic version, including additional complications caused by the
nonlinearity/degeneracy of the equation.

First we focus on the case $1< p< 2$. As in Section 2, assume that
$u$ is a solution on $B(x_0, R)$ and let $\theta(t): [0, \infty)\to
[0,1]$ be a cut-off function satisfying
$\frac{(\theta')^2}{\theta}\le 10$ and $\theta''\ge -10\theta\ge
-10$. Now let $\eta(x)=\theta^p\left(\frac{r(x)}{R}\right)$. It is
easy to see that
\begin{equation}\label{lc1}
\frac{|\nabla \eta|}{\eta^{1-\frac{1}{p}}}\le \frac{20}{R}.
\end{equation}
The proof of Lemma \ref{cf-est1} implies the following estimate.

\begin{lemma}\label{cf-est2} Assume that the sectional curvature is
bounded from below on $B(x_0, R)$ by $-K^2$. Then
\begin{equation}\label{cut-off-ieq1}
\Delta \eta +(p-2)\frac{\eta_{ij}u_i u_j}{f}\ge -\frac{C_1(n,
KR)}{R^2}
\end{equation}
where $C_1(n, KR)=40(n+p-2)(1+KR)-20$.
\end{lemma}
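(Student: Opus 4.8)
The plan is to follow the corresponding step of the proof of Lemma \ref{cf-est1}, the only differences being that the cut-off is now $\eta = \theta^p(r/R)$ instead of $\theta(r/R)$ and that one keeps track of the extra power $p$. As there, the quantity to be estimated is exactly $A_{ij}\eta_{ij}$ with $A_{ij} = g_{ij} + (p-2)u_iu_j/f$; in particular neither the equation \eqref{pv2-equ} nor a maximum-point condition enters. Writing $s = r/R$ and differentiating $\eta = \theta(s)^p$ twice,
\[
\nabla_i\nabla_j\eta = \frac{p\,\theta^{p-1}\theta'}{R}\,r_{ij} + \frac{p(p-1)\theta^{p-2}(\theta')^2 + p\,\theta^{p-1}\theta''}{R^2}\,r_i r_j ,
\]
so that
\[
A_{ij}\eta_{ij} = \frac{p\,\theta^{p-1}\theta'}{R}\,A_{ij}r_{ij} + \frac{p(p-1)\theta^{p-2}(\theta')^2 + p\,\theta^{p-1}\theta''}{R^2}\,A_{ij}r_i r_j .
\]

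For $1 < p < 2$ the tensor $A$ is positive definite, with eigenvalues $1$ and $p-1$, so $0 < p-1 \le A_{ij}r_i r_j \le |\nabla r|^2 = 1$; and, since $\tfrac{1+Kr}{r}g_{ij} - r_{ij}$ is nonnegative definite by the Hessian comparison theorem while $A$ is nonnegative definite, their trace pairing is nonnegative, i.e. $A_{ij}r_{ij} \le \operatorname{tr}(A)\tfrac{1+Kr}{r} = (n+p-2)\tfrac{1+Kr}{r}$, exactly as in Lemma \ref{cf-est1}. Now I would estimate the three pieces. The coefficient $p(p-1)\theta^{p-2}(\theta')^2$ is nonnegative precisely because $1 < p < 2$, and $A_{ij}r_i r_j \ge 0$, so that term may simply be dropped in a lower bound -- this is the one spot where the restriction on $p$ is used. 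For the $\theta''$ term, $\theta'' \ge -10\theta$ together with $0 \le A_{ij}r_i r_j \le 1$ gives $\tfrac{p\theta^{p-1}\theta''}{R^2}A_{ij}r_i r_j \ge -\tfrac{10p\,\theta^p}{R^2} \ge -\tfrac{20}{R^2}$, using $\theta \le 1$ and $p < 2$. For the $\theta'$ term, $\theta' \le 0$ and $\theta' \equiv 0$ on $\{r \le R/2\}$, so on the support of $\theta'$ one has $r \in [R/2, R]$, hence $\tfrac{1+Kr}{r} \le \tfrac{2(1+KR)}{R}$; combining this with the bound on $A_{ij}r_{ij}$, the sign of $\theta'$, and $|\theta'| \le \sqrt{10}\,\theta^{1/2} \le \sqrt{10}$ yields $\tfrac{p\theta^{p-1}\theta'}{R}A_{ij}r_{ij} \ge -C\,(n+p-2)(1+KR)/R^2$ for an explicit numerical $C$. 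Adding the two contributions gives an estimate of the stated form $A_{ij}\eta_{ij} \ge -C_1(n,KR)/R^2$; the precise constants appearing in $C_1$ result from a deliberately non-sharp accounting of the bounds above.

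There is no genuine obstacle here: the computation is a routine variant of the cut-off estimate in the proof of Lemma \ref{cf-est1}, and the only thing that really must be checked is that the signs cooperate -- in particular, that the new second-derivative term $p(p-1)\theta^{p-2}(\theta')^2\, r_i r_j$ produced by $\eta = \theta^p$ enters with a favorable sign, which holds exactly because $1 < p < 2$ and $A$ is nonnegative definite. The explicit constants are then read off by inserting $(\theta')^2/\theta \le 10$, $\theta'' \ge -10\theta$, $\theta \le 1$ and $p < 2$. One may also note that the choice $\eta = \theta^p$ is precisely what makes $|\nabla\eta|/\eta^{1-1/p} = p\,\theta^{p-1}|\theta'|/(R\,\theta^{p-1}) = p|\theta'|/R \le 20/R$, which is the estimate \eqref{lc1} recorded just before the lemma.
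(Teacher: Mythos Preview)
Your argument is correct and is exactly the approach the paper has in mind: the paper simply says ``the proof of Lemma~\ref{cf-est1} implies the following estimate,'' and you have carried out that adaptation with $\eta=\theta^p(r/R)$ in place of $\theta(r/R)$. One small inaccuracy in your commentary: the nonnegativity of the extra term $p(p-1)\theta^{p-2}(\theta')^2\,A_{ij}r_ir_j$ holds for every $p>1$, not ``precisely because $1<p<2$''; the restriction $p<2$ actually enters in your bound on the $\theta''$ term (to get $10p\le 20$) and in the inequality $A_{ij}r_ir_j\le 1$, not in the sign of the new $(\theta')^2$ piece.
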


\begin{theorem} Assume that the sectional curvature of $M$ on $B(x_0, R)$ satisfies $K_M \ge -K^2$ and
$u$ is a solution of (\ref{pv2-equ}). Assume further that $1<p<2$
and $u_t\le0$. Then for any $\alpha
>1$, on $B(x_0, \frac{R}{2})$,
\begin{equation}\label{loc-estfin}
|\nabla u|^p+\alpha u_t\le \max\left\{\left( 8n\alpha^2
\left(\frac{C_2'}{R^2}+\frac{C_3}{t^{\frac{2}{p}}}\right)\right)^{\frac{p}{2}},\quad
\left(4n(n-1)K^2\right)^{p/2} \left(\frac{\alpha}{\alpha
-1}\right)^p\right\}.
\end{equation}
\end{theorem}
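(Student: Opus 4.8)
The plan is to run the Li--Yau argument of Theorem \ref{thm:ly-ddnonl}, but localized: apply the parabolic maximum principle to $G := t\,\eta\,F_\alpha$ on $B(x_0,R)\times[0,T]$, where $F_\alpha = |\nabla u|^p+\alpha u_t = f^{p/2}+\alpha u_t$ and $\eta = \theta^p(r/R)$ is the cutoff of Lemma \ref{cf-est2}, which satisfies \eqref{lc1} and, directly from $(\theta')^2/\theta\le 10$ and $p>1$, the undegenerate bound $|\nabla\eta|^2/\eta\le 40/R^2$. The two analytic inputs are Corollary \ref{cor:grad}, giving $(\p_t-\mathcal{L})F_\alpha=-pf^{p-2}(|\nabla\nabla u|^2_A+R_{ij}u_iu_j)$, together with these elementary consequences of \eqref{pv2-equ}: since $u_t=\operatorname{div}(f^{p/2-1}\nabla u)-f^{p/2}=F_1-f^{p/2}$ one has $F_1=\alpha^{-1}(F_\alpha+(\alpha-1)f^{p/2})$, and from $F_1=f^{p/2-1}\tr_A(\nabla\nabla u)$ and $n|T|^2_A\ge(\tr_A T)^2$ one gets $f^{p-2}|\nabla\nabla u|^2_A\ge\frac{1}{n}F_1^2$; finally $u_t\le 0$ forces $0\le F_\alpha\le f^{p/2}$. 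Since $K_M\ge -K^2$ gives $R_{ij}u_iu_j\ge -(n-1)K^2 f$, these combine to
\[
(\p_t-\mathcal{L})F_\alpha\ \le\ -\frac{p}{n\alpha^2}\Bigl(F_\alpha+(\alpha-1)f^{p/2}\Bigr)^2\ +\ p(n-1)K^2 f^{p-1}
\]
on $\{f>0\}$.

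First I would dispose of the trivial case: if $\sup G\le 0$ on $B(x_0,R)\times[0,T]$ there is nothing to prove (the asserted bound is positive), so assume the maximum is positive, hence attained at some $(x_1,t_1)$ with $t_1>0$, $x_1\in B(x_0,R)$, and — because $u_t\le 0$ forces $f^{p/2}(x_1,t_1)\ge F_\alpha(x_1,t_1)>0$ — with $f(x_1,t_1)>0$. The possibility that $x_1$ lies on the cut locus of $x_0$ is handled by the standard Calabi trick, and the degeneracy of $\mathcal{L}$ as $f\to 0$ is circumvented exactly as in Section 6: run the whole argument first for the strictly parabolic regularizations $u_\e$ of \eqref{eq:pv2approx} (for which $f_\e=|\nabla u_\e|^2+\e$ is bounded below and Lemma \ref{lem:approxgrad} supplies the analogue of Corollary \ref{cor:grad}), and let $\e\to 0$ at the end. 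At $(x_1,t_1)$ we have $\nabla(\eta F_\alpha)=0$, hence $\nabla F_\alpha=-(F_\alpha/\eta)\nabla\eta$, and $(\p_t-\mathcal{L})G\ge 0$; expanding $\mathcal{L}(\eta F_\alpha)=\eta\mathcal{L}F_\alpha+F_\alpha\mathcal{L}\eta+2f^{p/2-1}\langle A\nabla\eta,\nabla F_\alpha\rangle$ and substituting the gradient identity turns this into
\[
0\ \le\ \eta F_\alpha\ +\ t\eta(\p_t-\mathcal{L})F_\alpha\ -\ tF_\alpha\mathcal{L}\eta\ +\ \frac{2tF_\alpha}{\eta}f^{p/2-1}\langle A\nabla\eta,\nabla\eta\rangle .
\]

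The technical core — and the step I expect to be the main obstacle — is estimating the two cutoff error terms $-tF_\alpha\mathcal{L}\eta$ and $\frac{2tF_\alpha}{\eta}f^{p/2-1}\langle A\nabla\eta,\nabla\eta\rangle$ so that the degenerate factors $f^{p/2-1},f^{p/2-2}$ (negative exponents, since $1<p<2$) do not spoil things. For the second term one uses $A\le\id$ (valid for $p<2$), so $\langle A\nabla\eta,\nabla\eta\rangle\le|\nabla\eta|^2$, then $|\nabla\eta|^2/\eta\le 40/R^2$ and $F_\alpha f^{p/2-1}\le f^{p-1}$ (from $F_\alpha\le f^{p/2}$), yielding a clean bound $\le C(p)\,t\,f^{p-1}/R^2$. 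For $\mathcal{L}\eta$ one proceeds as in the proof of Lemma \ref{cf-est1}: compute $\operatorname{div}(A\nabla\eta)$, eliminate $\Delta u$ and $\nabla f$ using \eqref{pv2-equ} and $\nabla F_\alpha=-(F_\alpha/\eta)\nabla\eta$ — here a $u_t$ term enters where in Section 2 there was none — and invoke Lemma \ref{cf-est2} for the principal grouping $\Delta\eta+(p-2)\eta_{ij}u_iu_j/f\ge -C_1(n,KR)/R^2$, which is bounded by a constant over $R^2$ with no power of $\eta$. The remaining cross terms, involving $\langle\nabla u,\nabla\eta\rangle$ and $\nabla f=2\nabla\nabla u(\nabla u,\cdot\,)$, are absorbed by Young's inequality: partly into a small fixed fraction of the coercive term $pf^{p-2}|\nabla\nabla u|^2_A\ge\frac{p}{n}F_1^2$, partly into further $R^{-2}f^{p-1}$-type remainders, the choice $\eta=\theta^p$ (through \eqref{lc1}) being exactly what makes the powers of $\eta$ balance. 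Collecting everything, the maximum-point inequality becomes, schematically,
\[
\frac{pt\eta}{n\alpha^2}\Bigl(F_\alpha+(\alpha-1)f^{p/2}\Bigr)^2\ \le\ \eta F_\alpha\ +\ p(n-1)K^2\,t\eta\, f^{p-1}\ +\ \frac{C(n,p,KR)}{R^2}\,t\,f^{p-1}.
\]

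Finally, a dichotomy on the size of $f$ at $(x_1,t_1)$ closes the argument. If $f(x_1,t_1)\ge n\alpha^2(n-1)K^2/(\alpha-1)^2$, then expanding $(F_\alpha+(\alpha-1)f^{p/2})^2\ge F_\alpha^2+2(\alpha-1)F_\alpha f^{p/2}+(\alpha-1)^2 f^p$ (all three terms nonnegative at the maximum) shows $\frac{p(\alpha-1)^2}{n\alpha^2}t\eta f^p$ dominates the curvature term $p(n-1)K^2 t\eta f^{p-1}$, while $\frac{2p(\alpha-1)}{n\alpha^2}t\eta F_\alpha f^{p/2}$ can be played against $\frac{C}{R^2}t f^{p-1}$; what remains is a quadratic inequality in $G=t\eta F_\alpha$, whose resolution — keeping track of the parabolic scaling $t\sim R^p$ of \eqref{pv2-equ}, which is what turns $R^{-2}$ into $t^{-2/p}$ — yields $G\le t\,\bigl(8n\alpha^2(C_2'/R^2+C_3/t^{2/p})\bigr)^{p/2}$ for suitable $C_2'(n,p,KR)$, $C_3(n,p,\alpha)$; evaluating on $B(x_0,R/2)$, where $\eta\equiv 1$, and letting $T\to\infty$ gives the first entry of the maximum. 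If instead $f(x_1,t_1)<n\alpha^2(n-1)K^2/(\alpha-1)^2$, then already $F_\alpha(x_1,t_1)\le f^{p/2}(x_1,t_1)\le(n(n-1)K^2)^{p/2}(\alpha/(\alpha-1))^p$, and absorbing the constants picked up in the Young inequalities replaces this by the second entry $(4n(n-1)K^2)^{p/2}(\alpha/(\alpha-1))^p$. Letting $\e\to 0$ establishes \eqref{loc-estfin}.
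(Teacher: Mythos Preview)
Your outline matches the paper's proof: localize $tF_\alpha$ by $\eta=\theta^p(r/R)$, apply the parabolic maximum principle, expand using Corollary~\ref{cor:grad} and Lemma~\ref{cf-est2}, absorb the cutoff errors into the Hessian, and close with a dichotomy on the curvature term followed by Young's inequality with conjugate exponents $2/(2-p)$ and $2/p$ (this is what produces the $t^{-2/p}$). The paper also organizes the algebra via $y=\eta f^{p/2}$ and $z=-\eta u_t$ and uses $y-\alpha z\le y$ in exactly the way you describe.

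One step needs correction when you write it out. You cannot eliminate $\nabla f$ from $\mathcal{L}\eta$ via the maximum-point condition $\nabla F_\alpha=-(F_\alpha/\eta)\nabla\eta$: in the parabolic setting $\nabla F_\alpha=\tfrac{p}{2}f^{p/2-1}\nabla f+\alpha\nabla u_t$, and the $\nabla u_t$ piece does not decouple, so the substitution that worked in Section~\ref{sec:mainproof} is unavailable here. The paper instead keeps the $\nabla f$ contribution as the leading error term
\[
I=(2-p)\,F_\alpha f^{p/2-2}\bigl\langle\nabla\eta,\nabla f\bigr\rangle_{\tilde A},\qquad \tilde A=\id-\tfrac{\nabla u\otimes\nabla u}{f},
\]
and absorbs it directly into the Hessian: in the frame $e_1=\nabla u/|\nabla u|$ one has $|\nabla f|_{\tilde A}^2=4f\sum_{j\ge 2}u_{j1}^2$, while $|\nabla\nabla u|^2_A\ge 2(p-1)\sum_{j\ge 2}u_{j1}^2+\tfrac{1}{n}(\tr_A\nabla\nabla u)^2$, so the off-diagonal Hessian entries absorb $I$ at the cost of $\tfrac{2-p}{2}$ of the full $|\nabla\nabla u|^2_A$, leaving a coercive $-\tfrac{3p-2}{2n}\eta F_1^2$. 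This is precisely your ``small fixed fraction'' remark made quantitative; with that fix the rest of your argument goes through as written.
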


\begin{remark} The above result is a parabolic analogue of Theorem
\ref{main1}. Indeed, if $u$ is a solution to the elliptic problem,
we have $u_t=0$, hence one can apply the theorem and recover
the elliptic estimate (albeit with a worse constant).
\end{remark}

\begin{proof}
Clearly, $tF_\alpha \eta$ achieves a maximum over $B(x_0,
R)\times[0, T]$ somewhere. For the purpose of bounding $F_\alpha$
from above, we may assume that $\eta F_\alpha>0$ at this maximum
point $(y_0, t_0)$; this implies in particular that $t_0>0$.
Applying the maximum principle at
$(y_0, t_0)$,  we have that $ \left(\frac{\partial}{\partial
t}-\mathcal{L}\right)(tF_\alpha \eta)\ge 0 $.

 Direct computation shows that
\begin{eqnarray*}
\left(\frac{\partial}{\partial t}-\mathcal{L}\right)(tF_\alpha \eta)
&=& t\eta\left(\frac{\partial}{\partial t}-\mathcal{L}\right)
F_\alpha +tF_\alpha \left(\frac{\partial}{\partial
t}-\mathcal{L}\right) \eta -2tf^{p/2-1}\langle \nabla \eta, \nabla
F_\alpha\rangle_A\\
&\quad & +\eta F_\alpha.
\end{eqnarray*}
Here $\langle \cdot, \cdot \rangle_A$ denotes the product with
respect to $A^{ij}$. Using that $\nabla (\eta F_\alpha)=0$
at the maximum point, with formula (\ref{bochner-key}) and the simple
estimate on $\frac{|\nabla \eta|_A^2}{\eta}$ from Section 2 we find
that
\begin{eqnarray*}
0&\le& \left(\frac{\partial}{\partial
t}-\mathcal{L}\right)(tF_\alpha \eta)\\
 &\le& -pt\eta
f^{p-2}\left(|\nabla \nabla u|^2_A -(n-1)K^2 f\right) +tF_\alpha
\left(\frac{\partial}{\partial
t}-\mathcal{L}\right) \eta \\
&\quad& +2tf^{p/2-1} F_\alpha \frac{|\nabla \eta|^2}{\eta} +\eta
F_\alpha.
\end{eqnarray*}
We first derive an effective estimate on
$F_\alpha\left(\frac{\partial}{\partial t}-\mathcal{L}\right) \eta$.
Direct calculation using (\ref{pv2-equ}) yields
\begin{eqnarray*}
F_\alpha\left(\frac{\partial}{\partial t}-\mathcal{L}\right)
\eta&=&(2-p)F_{\alpha}f^{p/2-2}\left(\langle \nabla \eta, \nabla f
\rangle -\frac{\langle \nabla u, \nabla \eta\rangle \langle \nabla
u, \nabla
f \rangle }{f}\right)\\
&\quad& -\left(\Delta \eta +(p-2)\frac{\eta_{ij}u_i
u_j}{f}\right)f^{p/2-1}F_\alpha+pf^{p/2-1}F_\alpha\langle \nabla u,
\nabla
\eta\rangle\\
&\quad& +(2-p)F_1 F_\alpha \frac{\langle \nabla u, \nabla
\eta\rangle }{f}.
\end{eqnarray*}
We shall estimate each term in turn. Define $\tilde
A^{ij}=g^{ij}-\frac{u^i u^j}{f}$. It is easy to see that
\begin{eqnarray*}
I&=&(2-p)F_{\alpha}f^{p/2-2}\left(\langle \nabla \eta, \nabla f
\rangle -\frac{\langle \nabla u, \nabla \eta\rangle \langle \nabla
u, \nabla f \rangle }{f}\right)\\
&\le& (2-p)F_\alpha f^{p/2-2}\langle \nabla \eta, \nabla f\rangle_{\tilde A}\\
&\le & (2-p)F_\alpha f^{p/2-2}|\nabla \eta| |\nabla f|_{\tilde A}.
\end{eqnarray*}
Choose a normal frame so that $e_1=\frac{u_1}{|\nabla u|}$. Then
$u_j=0$ for all $j\ge 2$, and $f_k=2u_{1k} u_1$ for $k\ge 1$. Hence
$$
F_\alpha f^{p/2-2}|\nabla f|_{\tilde A} =2F_\alpha
f^{p/2-2}\left(\sum_{j\ge 2} u_{j1}^2 u_1^2\right)^{1/2}=2F_\alpha
f^{p/2-3/2}\left(\sum_{j\ge 2} u_{j1}^2 \right)^{1/2}.
$$
On the other hand,
\begin{eqnarray*}
|\nabla \nabla u|^2_{A}&=& u^2_{ik}+2(p-2) u_{k1}^2 +(p-2)^2
u_{11}^2\\
&=& (p-1)^2 u_{11}^2 +2(p-1)\sum_{j\ge 2} u_{j1}^2+\sum_{j,k\ge
2}u_{jk}^2\\
&\ge& 2(p-1)\sum_{j\ge 2} u_{j1}^2 +\frac{1}{n}\left(\tr_A (\nabla
\nabla u)\right)^2.
\end{eqnarray*}
Hence we have that
\begin{equation}\label{detail1}
I\le \frac{2-p}{2}\eta f^{p-2}|\nabla \nabla u|^2_{A}
+\frac{2-p}{p-1}\frac{|\nabla \eta|^2}{\eta}\frac{F_\alpha
^2}{f}-\frac{2-p}{2n}F_1^2 f^{p-2}\eta.
\end{equation}
Lemma \ref{cf-est2} implies that
\begin{equation}\label{detail2}
II=-\left(\Delta \eta +(p-2)\frac{\eta_{ij}u_i
u_j}{f}\right)f^{p/2-1}F_\alpha\le \frac{C_1(n, R,
k)}{R^2}f^{p/2-1}F_\alpha.
\end{equation}
Furthermore, it is easy to see that
\begin{equation}\label{detail3}
III=pf^{p/2-1}F_\alpha\langle \nabla u, \nabla \eta\rangle\le
pf^{\frac{p-1}{2}}F_\alpha |\nabla \eta|
\end{equation}
and
\begin{equation}\label{detail4}
IV=(2-p)F_1 F_\alpha \frac{\langle \nabla u, \nabla \eta\rangle
}{f}\le (2-p)F_1F_\alpha |\nabla \eta|f^{-1/2}.
\end{equation}

Since  $f^{p-2}|\nabla \nabla u|_A^2 \ge \frac{1}{n} F^2_1$ we split
$\eta F_\alpha = y-\alpha z$ with $y=\eta |\nabla u|^p$ and $z=\eta
u_t$. Also we can write $\eta^2 F_1^2$ as
\begin{equation}
(y-z)^2=\frac{1}{\alpha^2}(y-\alpha z)^2+2\frac{\alpha
-1}{\alpha^2}(y-\alpha z) y +\left(\frac{\alpha-1}{\alpha}\right)^2
y^2.
\end{equation}
Note $y-z=\frac{1}{\alpha}(y-\alpha z)+(1-\frac{1}{\alpha})y>0$.
Combining (\ref{detail1})--(\ref{detail4}), we arrive at
\begin{eqnarray*}
0&\le& -\frac{3p-2}{2}\eta^2 f^{p-2}|\nabla \nabla u|_A^2
+(n-1)p\eta^2
f^{p-1}K^2 +\frac{2-p}{p-1}|\nabla \eta|^2 F_\alpha^2 f^{-1}\\
&\, & +\frac{C_1(n, KR)}{R^2}\eta
f^{p/2-1}F_\alpha+pf^{\frac{p-1}{2}}F_\alpha |\nabla \eta|\eta+(2-p)
F_1 F_\alpha |\nabla \eta|\eta f^{-1/2}\\
&\,& +f^{p/2-1}F_\alpha |\nabla \eta|^2 +\frac{\eta^2 F_\alpha}{t}\\
 &\le & -\frac{3p-2}{2n} (y-z)^2+p(n-1)y^{2-\frac{2}{p}}K^2
+\frac{2-p}{p-1}(y-\alpha z)^2 y^{-\frac{2}{p}}\frac{|\nabla
\eta|^2}{\eta
^{2-\frac{2}{p}}}\\
&\, & +\frac{C_1(n, KR)}{R^2} (y-\alpha
z)y^{1-\frac{p}{2}}+p(y-\alpha z) y^{1-\frac{1}{p}}\frac{|\nabla
\eta|}{\eta^{1-\frac{1}{p}}}\\
&\,& +(2-p)(y-z)(y-\alpha z) y^{-\frac{1}{p}}\frac{|\nabla
\eta|}{\eta^{1-\frac{1}{p}}}+(y-\alpha z)
y^{1-\frac{2}{p}}\frac{|\nabla \eta|^2}{\eta^{2-\frac{2}{p}}}+\frac{y-\alpha z}{t}\\
&\le &  -\frac{1}{2n}\left(\frac{1}{\alpha^2}(y-\alpha
z)^2+2\frac{\alpha -1}{\alpha^2}(y-\alpha z) y
+\left(\frac{\alpha-1}{\alpha}\right)^2 y^2\right)\\
&\, &+2(n-1)y^{2-\frac{2}{p}}K^2
+\frac{50}{R^2}\frac{2-p}{p-1}(y-\alpha z)^2
y^{-\frac{2}{p}}+\frac{C_1(n, KR)}{R^2} (y-\alpha
z)y^{1-\frac{2}{p}}\\
&\,& +\frac{20}{R}(y-\alpha z) y^{1-\frac{1}{p}}+\frac{(2-p)}{\alpha
R}(y-\alpha z)^2
y^{-\frac{1}{p}}+\frac{(2-p)(\alpha-1)}{R\alpha}(y-\alpha z)
y^{1-\frac{1}{p}}\\
&\,&+\frac{100}{R^2}(y-\alpha z) y^{1-\frac{2}{p}}+\frac{y-\alpha
z}{t}.
\end{eqnarray*}
The above gives a local estimate if we assume  $u_t\le 0$ (namely
$z\ge 0$). (Assuming $u_t\le 0$ initially,
this can be established by the maximum principle, in
view of (\ref{pv2-equt}).)
In this case, we have that $y-\alpha z\le y$.

We make use of the above computation as follows. If we do not have
\begin{equation}\label{loc-est1}
y^{2/p}\le 4n(n-1)K^2\left(\frac{\alpha}{\alpha-1}\right)^2,
\end{equation}
which implies that $y-\alpha z$ is bounded by the same bound, we can
drop the terms only involving the powers of $y$ to still have the
last estimate on the functions of $y-\alpha z$.

 By the elementary inequality $2ab\le a^2+b^2$, we have that
 \begin{eqnarray*}
&\, &\quad\quad(y-\alpha z)\left(-\frac{\alpha-1}{n \alpha ^2} y
+\frac{20+(2-p)\frac{\alpha-1}{\alpha}}{R}y^{1-\frac{1}{p}}+\frac{C_1(n,
KR)+100}{R^2}y^{1-\frac{2}{p}}\right) \\
&\le&  (y-\alpha z)\left(
C_1+100+\left(20+(2-p)\frac{\alpha-1}{\alpha}\right)^2
\frac{n\alpha^2}{4(\alpha
-1)}\right)\frac{1}{R^2}y^{1-\frac{2}{p}}\\ &\le&
\frac{1}{R^2}\left(
C_1+100+\left(20+(2-p)\frac{\alpha-1}{\alpha}\right)^2
\frac{n\alpha^2}{4(\alpha -1)}\right)(y-\alpha z)^{2-\frac{2}{p}}.
 \end{eqnarray*}
 Here we have used $y-\alpha z\le y$. This takes care of the terms in which $(y-\alpha z)$
appears linearly.

Making use of $y-\alpha z\le y$ again, we have that
\begin{eqnarray*}
0&\le & -\frac{1}{2n \alpha^2}(y-\alpha
z)^2+\frac{C_2}{R^2}(y-\alpha z)^{2-\frac{2}{p}}+\frac{2-p}{\alpha
R}(y-\alpha z)^{2-\frac{1}{p}}+\frac{y-\alpha z}{t}
\end{eqnarray*}
with
$$
C_2=C_1(n, KR)+100+(20+(2-p)\frac{\alpha-1}{\alpha})^2
\frac{n\alpha^2}{4(\alpha -1)}+50\frac{2-p}{p-1}.
$$
Another use of the above elementary inequality reduces it to
\begin{equation}\label{fin1}
0\le-\frac{1}{4n \alpha^2}(y-\alpha z)^2+\frac{C_2'}{R^2} (y-\alpha
z)^{2-\frac{2}{p}}+\frac{y-\alpha z}{t}
\end{equation}
with $C_2'=C_2+n(2-p)^2$. Using Young's inequality $ab\le
\frac{a^{p'}}{p'}+\frac{b^{q'}}{q'}$ with
$\frac{1}{p'}+\frac{1}{q'}=1$,  we finally have that
\begin{equation}\label{fin2}
0\le-\frac{1}{8n \alpha^2}(y-\alpha
z)^2+\left(\frac{C_2'}{R^2}+\frac{C_3}{t^{\frac{2}{p}}}\right)(y-\alpha
z)^{2-\frac{2}{p}}
\end{equation}
where
$$
C_3=\frac{p}{2}\left(4n\alpha^2 (2-p)\right)^{\frac{2-p}{2}}.
$$
From (\ref{fin2}) it is then easy to derive the theorem.\end{proof}

\bibliographystyle{amsalpha}

\end{document}